\newcommand{\cq}{\coloneqq}
\newcommand{\ts}{\textstyle}
\newcommand{\piw}{\pi_{w}}
\newcommand{\good}{R}
\newcommand{\vex}{\ensuremath{\alpha}}
\newcommand{\biasname}{\ensuremath{\theta}}
\newcommand{\cups}{\cup \cdots \cup}
\newcommand{\Quad}[1]{\quad\text{#1}\quad}
\newcommand{\Qfor}{\Quad{for}}
\newcommand{\Qforall}{\Quad{for all}}
\newcommand{\Qwith}{\Quad{with}}
\newcommand{\Qwhere}{\Quad{where}}
\newcommand{\Qand}{\Quad{and}}
\newcommand{\ceil}[1]{\lceil #1 \rceil}
\newcommand{\floor}[1]{\lfloor #1 \rfloor}
\newcommand{\mcb}{\mathcal B}
\newcommand{\mbn}{\mathbb N}
\newcommand{\mbpn}{\mathbb N \setminus \{0\}}
\newcommand{\SuggestChange}[2]{%
	{\color{red} \relax\ifmmode\text{\sout{$#1$}}\else \sout{#1}\fi } \ {\color{blue} #2 } %
}
\DeclareMathOperator{\srw}{SRW}
\DeclareMathOperator{\brw}{BRW}
\DeclareMathOperator{\tbrw}{TBRW}
\newcommand{\dmax}{d_{\mathsf{max}}}
\newcommand{\dmin}{d_{\mathsf{min}}}
\newcommand{\ct}{\mathcal{T}}
\renewcommand{\Pr}[1]{\mathbb{P}\left[\,#1\,\right]}
\newcommand{\Ex}[1]{\mathbb{E} \left[\,#1\,\right]}
\newcommand*{\abs}[1]{\lvert #1\rvert}
\newcommand{\ETBcov}[2]{C_{#1}^{\eps\mathsf{TB}}(#2)}
\newcommand{\tetb}{t_{\mathsf{cov}}^{\eps\mathsf{TB}}}
\newcommand{\tcrw}{t_{\mathsf{cov}}^{\mathsf{CRW}}}
\newcommand{\tmix}{t_{\mathsf{mix}}}
\newcommand{\thit}{t_{\mathsf{hit}}}
\newcommand{\dist}{\operatorname{dist}}
\renewcommand{\leq}{\leqslant}
\renewcommand{\geq}{\geqslant}
\renewcommand{\le}{\leqslant}
\renewcommand{\ge}{\geqslant}
\renewcommand{\tilde}{\widetilde}
\newcommand{\eps}{\varepsilon}
\def\th@plain{%
	\thm@headfont{\bfseries\sffamily}
	\thm@notefont{\mdseries\rmfamily}
	\slshape 
}
\def\th@definition{%
	\thm@headfont{\bfseries\sffamily}
	\thm@notefont{\mdseries\rmfamily}
}
\newenvironment{Proof}[1][\proofname]{%
	\proof[\upshape\bfseries\sffamily\boldmath#1]
}{\endproof}
\theoremstyle{plain}
\newtheorem{theorem}{Theorem}[section]
\newtheorem{lemma}[theorem]{Lemma}
\newtheorem{corollary}[theorem]{Corollary}
\newtheorem{proposition}[theorem]{Proposition}
\newtheorem{pbl}{Open Problem}
\newtheorem{que}{Question}
\theoremstyle{definition}
\newtheorem{definition}[theorem]{Definition}
\crefname{figure}{Figure}{Figures}
\newif\ifhideinternalcomments
\title{
	Time-Biased Random Walks and  \\
	Robustness of Expanders
}
\date{} 
	\author{ }
	\author{Sam Olesker-Taylor\thanks{Department of Statistics, University of Warwick, UK, \texttt{oleskertaylor.sam@gmail.com}, \orcidlink{0000-0001-9764-1645}} \and Thomas Sauerwald\thanks{Department of Computer Science \& Technology, University of Cambridge, UK, \texttt{tms41@cam.ac.uk}, \orcidlink{0000-0002-0882-283X}}\and John Sylvester\thanks{Department of Computer Science, University of Liverpool, UK, \texttt{john.sylvester@liverpool.ac.uk}, \orcidlink{0000-0002-6543-2934}}}
\begin{document}
	
	\maketitle
	
	\begin{abstract}
		Random walks on expanders play a crucial role in Markov Chain Monte Carlo algorithms, derandomization, graph theory, and distributed computing. A desirable property is that they are rapidly mixing, which is equivalent to having a spectral gap $\gamma$ (asymptotically) bounded away from $0$. 
		
		Our work has two main strands. First, we establish a dichotomy for the robustness of mixing times on edge-weighted $d$-regular graphs (i.e., reversible Markov chains) subject to a Lipschitz condition, which bounds the ratio of adjacent weights by~$\beta \ge 1$.
		\begin{itemize}
			\itemsep=0pt
			\item 
			If $\beta \ge 1$ is sufficiently small, then $\gamma \asymp 1$ and the mixing time is logarithmic~in~$n$.
			
			\item 
			If $\beta \ge 2d$, there is an edge-weighting such that $\gamma$ is polynomially small in $1/n$.
		\end{itemize}

		Second, we apply our robustness result to a time-dependent version of the so-called $\varepsilon$-biased random walk, as introduced in Azar et al.~[Combinatorica 1996].
		\begin{itemize}
			\itemsep=0pt
			\item 
			We show that, for any  constant $\varepsilon>0$, a bias strategy can be chosen adaptively so that the $\varepsilon$-biased random walk covers any bounded-degree regular expander in $\Theta(n)$ expected time, improving the previous-best bound of $O(n \log \log n)$.
			
			\item 
			We prove the first non-trivial lower bound on the cover time of the $\varepsilon$-biased random walk, showing that,
			on bounded-degree regular expanders,
			it is $\omega(n)$ whenever $\eps = o(1)$.
			We establish this by controlling how much the probability of arbitrary events can be ``boosted'' by using a time-dependent bias strategy.
		\end{itemize}
	\end{abstract}
	
	\clearpage

	\section{Introduction }
	
	A (simple) random walk on a graph is a stochastic process, which starts from one specific vertex, and transitions at each iteration to a randomly chosen neighbour. This can be generalized to so-called reversible Markov chains~\cite{levin2009markov}, which are equivalent to random walks on undirected graphs with edge weights. Markov chains are related to many physical and statistical models, as well as to diffusion processes (e.g., chip-firing or load-balancing). Also, many optimization, sampling, counting and derandomization problems are only known to be tackled efficiently by setting up a suitable Markov chain and running this chain long enough.
	
	\paragraph{Mixing Time.}
	The \emph{mixing time} is one of the most fundamental quantities related to Markov chains, and captures the convergence speed towards the stationary, i.e., limiting distribution. Markov chains that converge in logarithmic time are called ``rapidly mixing'', and can also be characterized by a constant gap in the spectrum of the transition matrix. In turn, the spectral gap itself is closely related to a variety of other combinatorial and geometric properties, including the edge conductance, vertex expansion and canonical paths.
	
	Despite the significance of the mixing time (and spectral gap) of Markov chains, it seems only a comparably limited number of works have studied the impact of ``small'' perturbations or changes. 
	In general, it is straightforward to verify that if the edge weights of a graph are perturbed by at most a factor which is uniform over all edges (if the given graph has no edge weights, this is equivalent to assuming all edge weights are $1$), then the spectral gap will only change by at most the same factor.
	This can be verified directly using the Dirichlet characterisation, but one can also appeal to so-called ``Markov Chain Comparison Techniques''~\cite{DGJM06}.
	However, for the mixing time, Ding and Peres \cite{DP13} showed a negative result, demonstrating that for some chains, a constant perturbation of the edge weights can result into a super-constant change in the (total-variation) mixing time.
	Previous works on ``robustness'' of Markov chains convergence tended to be focussed on the \textit{total-variation mixing time}, rather than the spectral gap, and most importantly, impose a \emph{global} condition on the edge weights. 
	
	In this work, we investigate a less restrictive, Lipschitz condition on the edge weights, 
	where the ratio of two neighbouring edge weights must be bounded by $\beta \geq 1$. Local Lipschitz conditions are natural restrictions and occur in many areas such as Lipschitz continuous functions, quasi-isometries or concentration of measure.
	However, with this merely local condition, edge weight-discrepancies may accumulate along shortest paths, causing discrepancies between (non-neighbouring) edge weights which can be polynomial (in the number $n$ of vertices), even in bounded-degree expanders, and potentially exponential in less well-connected graphs. 
	This can also have similar effects on the stationary distribution. To the best of our knowledge this setting has not been studied before, and one aim of this work is the following.
	
	\begin{que} 
		Given a graph with sufficiently good expansion (e.g., an expander), what is the impact of the Lipschitz parameter $\beta \geq 1$ on the mixing time? 
	\end{que}

	Another work which could be considered as a global change to edge weights is by the first author and Zanetti~\cite{Luca&Sam}, where the fastest mixing time of any edge-weighting subject to the stationary distribution being uniform (or close to uniform) was considered. This problem was initially introduced by Boyd, Diaconis and Xiao~\cite{BDX04}. Further noteworthy variants of random walks include so-called non-backtracking random walks, meaning that each step the most recently used edge is avoided. For graphs with sufficiently strong expansion (i.e., Ramanujan graphs), Alon, Benjamini, Lubetzky and Sodin~\cite{AlonBack} showed that the mixing time of non-backtracking random walks can be up to a factor of two faster compared with the simple random walk.

	\paragraph{Cover Time.}
	The \emph{cover time} of a graph is another fundamental quantity, measuring the expected number of steps until all vertices are visited by the random walk. It arises naturally as the random walk offers a light-weight, low-memory approach to explore an unknown network. In fact, one of the first applications of the cover time of random walks was a polynomial-time algorithm for deciding connectivity \cite{UniTrans}. There are also deep connections between the cover time, the effective resistance of electrical networks, and Gaussian free fields~\cite{DLP12}. 
	
	One fundamental lower bound is that the cover time of the simple random walk on any $n$-vertex graph is at least $\Omega(n \log n)$~\cite{FeigeLower}; for some graphs, it may be as large as $\Theta(n^3)$~\cite{FeigeUpper}. This has led to a number of proposed modifications of the random walk with the goal of reducing the cover time. For example, Berenbrink, Cooper and Friedetzky~\cite{BCF15} studied ``greedy'' variants of random walks, which perform random walk transitions, but whenever available, take an unvisited edge. They analyzed different specific rules, and proved regular expanders of even-degree, which satisfy a certain technical condition (having logarithmic girth suffices), have cover time $O(n)$. For random $d$-regular graphs, where $d=3$ or $d\geq 4$ is even, Cooper, Frieze and Johansson \cite{CFJ18} proved tight bounds on the cover time of this greedy random walk variant. Cover times of similar greedy processes were also studied in \cite{BCERS10,OS14}.

	In this work we investigate the following way of augmenting a random walk, introduced by Azar, Broder and Karlin \cite{ABKLPbias}, and called the $\eps$-biased random walk ($\eps$-BRW) with parameter $\eps\in[0,1]$. In this process a controller has partial control over a random walk on a graph, and at each step with probability $\eps$ the controller can choose a neighbour of the current vertex, otherwise a uniformly random neighbour is selected. 
	Aside from the field of random walks, these processes also serve as a basic model for any randomized algorithm which relies on random bits to find a correct solution, in particular certain search problems~\cite{UniTrans,BirthdayParadoxPollard,Kangaroo,Pollard,SoSt}.
	
	Georgakopoulos, Haslegrave, and the second and third authors~\cite{ITCSpaper,POTC,ETB} extended the $\eps$-$\brw$ to allow for time-dependent strategies, and called this the $\eps$-time-biased random walk ($\eps$-$\tbrw$). The motivation for this was to investigate how 
	much the controller can reduce the cover time, as in this setting it is optimal to allow the strategy to evolve with time.
	The authors of~\cite{ITCSpaper,POTC} also introduced a related process called the \emph{choice random walk} (CRW) which is in the spirit of the power-of-two-choices \cite{MR1966907}. In this process, at each step \emph{two} randomly chosen neighbours are presented to a controller that needs to pick one of them. This process has since been used to sample spanning trees \cite{archer2024randomchoicespanningtrees}. While \cite{ITCSpaper,POTC,ETB} presented several bounds on the cover time of these processes, the best known bound for the CRW or $\varepsilon$-TBRW (with $\varepsilon <1$ constant) on expander graphs is $O(n \log \log n)$. Due to the alternative approaches mentioned earlier that lead to a linear cover time of $O(n)$ on \emph{some} expander graphs, and the fact that the only lower bound for the $\eps$-$\tbrw$ is $\Omega(n)$, the following open question is natural.
	
	\begin{que} Given a bounded-degree expander graph, is it possible to achieve a cover time of $O(n)$ using the CRW or the $\eps$-$\tbrw$ with a constant $\eps < 1$? 
	\end{que}
	
	Rather than minimizing cover time of expanders, one may also consider minimizing the worst-case cover time over all graphs. David and Feige~\cite{DF18} proved that the worst-case $\Theta(n^3)$  upper bound on the cover time can be reduced to $\Theta(n^2)$ by some local re-weighting scheme biasing the walk towards low-degree vertices, and that this is the best possible for reversible walks. The same process was studied by Cooper, Frieze and Petti~\cite{CFP18} on sparse Erd\H{o}s--R\'enyi random graphs. 
	
	\paragraph{Contributions.}
	We first address Question 1 by establishing a dichotomy in the Lipschitz parameter $\beta \ge 1$.
	Aided by this result, we answer Question 2 affirmatively for \emph{any} constant $\eps > 0$ in the regular case. Unlike previous studies on random walk variants achieving a linear cover time on expanders, our result does not impose any stronger requirement on the expansion, such as a high girth.
	The resolution of Question 2 suggests another question.
	\begin{que}
		Given a bounded-degree expander graph, is it possible to achieve a cover time of $O(n)$ using the $\eps$-$\tbrw$ with $\eps = o(1)$? 
	\end{que} 
	We resolve this question negatively by proving a lower bound which shows that the cover time of bounded-degree regular expanders is $\omega(n)$ whenever $\eps=o(1)$. One key tool to establish this is a general result which bounds how much the probability of any event of the SRW can be boosted by using the $\eps$-$\tbrw$.

	\subsection{Our Results}
	Our first two results concern the robustness of the spectral gap of a random walk on an edge-weighted graph subject to bounded local changes to the edge weights; the first is a positive result, and the second is negative. To ease the burden of notation these results are not stated in their strongest form and with slightly informal definitions; see Sections \ref{sec:prelim} and \ref{sec:robustness} for more detailed definitions and statements respectively. 
	
	All graphs $G=(V,E)$ in this work are undirected, connected and have no multiple edges, and $n=|V|$ always denotes the number of vertices. The \emph{simple random walk} (SRW) on $G$ refers to the (non-lazy) Markov chain which in each step moves to a uniformly and independently selected neighbour of the current vertex.  Given a graph $G=(V,E)$ and an edge-weighting $w : E \to \mathbb R_+$, we say that $w$ is $\beta$-Lipschitz if the ratio of weights between any neighbouring edges is at most $\beta \geq 1$ (cf.~\cref{def:lipschitz}). We define the reversible Markov chain $P_w$ to be the Markov chain with transition probabilities proportional to the edge weights $w$, and call this the chain \emph{induced by $w$}.  Finally, let $\gamma$ and $\gamma_{P_w}$ be the spectral gaps of the simple random walk on $G$ and $P_w$, respectively.
	A graph $G$ (more precisely, a sequence of graphs) is called an \emph{expander} if its spectral gap $\gamma$ is (asymptotically) bounded away from 0;
	see \cref{eq:expander_def} for a formal definition.
	
	\begin{theorem}[Simplified Version of \cref{res:robust}]
		\label{res:robustlite}
		Let $G$ be a $d$-regular graph, $\beta \leq 1+\gamma/32$, and $w : E(G) \to \mathbb R_+$ be a $\beta$-Lipschitz edge-weighting. Then, the chain $P_w$ induced by $w$ satisfies
		\[
		\gamma_{P_w}
		\ge
		10^{-8} \cdot d^{-16/\gamma}.
		\]
	\end{theorem}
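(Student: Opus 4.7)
The plan is to compare a symmetrized version of $P_w$ directly with the SRW transition matrix $M$ of $G$ via an eigenvalue perturbation bound. Since $P_w$ is reversible with stationary distribution $\pi_w(x) \propto W_x := \sum_{z \sim x} w(xz)$, the matrix $S_w := D_w^{1/2} P_w D_w^{-1/2}$, where $D_w := \mathrm{diag}(\pi_w)$, is symmetric, shares the real spectrum of $P_w$, and a direct calculation yields $S_w(x,y) = w(xy)/\sqrt{W_x W_y}$ on each edge $xy \in E$; meanwhile $M(x,y) = 1/d$ on each edge, with second eigenvalue $1 - \gamma$.

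The heart of the argument will be the operator-norm bound $\pnorm{S_w - M}{\mathrm{op}} \le \beta - 1$. The $\beta$-Lipschitz condition puts the weights of any two edges sharing a vertex within ratio $[1/\beta, \beta]$, so the $d$ edge-weights incident to a fixed vertex $x$ are pairwise within a factor of $\beta$. Hence, for any neighbour $y$ of $x$, one obtains $W_x \in [d w(xy)/\beta,\, d\beta w(xy)]$, and the same bound holds for $W_y$; substituting into the formula for $S_w(x,y)$ yields $S_w(x,y) \in [1/(d\beta),\, \beta/d]$, so $|S_w(x,y) - 1/d| \le (\beta-1)/d$ pointwise. Since $S_w - M$ is symmetric with at most $d$ non-zero entries of this magnitude per row, $\pnorm{S_w - M}{\mathrm{op}} \le \pnorm{S_w - M}{\infty} \le \beta - 1 \le \gamma/32$ by the hypothesis on $\beta$.

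Weyl's inequality then delivers $\lambda_2(S_w) \le \lambda_2(M) + \pnorm{S_w - M}{\mathrm{op}} \le 1 - \tfrac{31}{32}\gamma$, whence $\gamma_{P_w} = 1 - \lambda_2(S_w) \ge \tfrac{31}{32}\gamma$, which comfortably implies the stated $10^{-8} \cdot d^{-16/\gamma}$ throughout the range $\gamma \in (0,1]$, $d \ge 2$. The only genuinely non-trivial step is the local estimate $W_x \in [d w(xy)/\beta, d\beta w(xy)]$, obtained by unwinding the Lipschitz bound inside the single-vertex star at $x$; the rest is a textbook symmetric-matrix perturbation argument. The gap between this linear-in-$\gamma$ bound and the stated $d^{-16/\gamma}$ suggests that the full \cref{res:robust} is either formulated to cover a more general setting (e.g.\ non-regular graphs, where the symmetrized chain no longer compares so cleanly to a uniform reference matrix), or simply adopts the weaker form because it is more convenient for the subsequent applications to the $\eps$-$\tbrw$.
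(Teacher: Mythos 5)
Your proposal is correct, but it proves \cref{res:robustlite} by a genuinely different and more elementary route than the paper. The key observation — that the symmetrization $S_w(x,y)=w(xy)/\sqrt{w(x)w(y)}$ involves only ratios of weights of edges sharing a vertex, so the $\beta$-Lipschitz condition gives the entrywise bound $\lvert S_w(x,y)-1/d\rvert\le(\beta-1)/d$ regardless of how much weights (and hence $\pi_w$) accumulate globally — is sound, and the chain of estimates $\pnorm{S_w-M}{\mathrm{op}}\le\pnorm{S_w-M}{\infty}\le\beta-1\le\gamma/32$ followed by Weyl's inequality is a valid textbook perturbation argument; it yields $\gamma_{P_w}\ge\tfrac{31}{32}\gamma$, which indeed dominates $10^{-8}d^{-16/\gamma}$ for all $\gamma\in(0,2]$ and $d\ge 1$. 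The paper instead obtains \cref{res:robustlite} as a corollary of the full \cref{res:robust}, whose proof is combinatorial: it buckets $V$ by exponentially decaying $\piw$-values, shows via a cascading argument that representative buckets of any set $S$ expand under the $2K$-step chain $P_w^{2K}$, converts this into a conductance bound, and finishes with Cheeger. The reason for that heavier machinery is not non-regularity (the full theorem is also stated for $d$-regular graphs) but the admissible Lipschitz range: \cref{res:robust} allows $\beta\le\sigma\approx e^{\Theta(\Psi_G)}$, tied to the \emph{vertex expansion} rather than the spectral gap, which can be far larger than $1+\gamma/32$ (notably when $d$ is large, or whenever $\Psi_G\gg\gamma$), and this wider range is exactly what the cover-time application needs, since there $\beta=(1-\eps)^{-1}$ with $\eps$ allowed up to $1-e^{-\Psi_G/32}$. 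Your argument cannot reach that regime — once $\beta-1\gtrsim\gamma$ the Weyl bound becomes vacuous — but within the hypotheses of \cref{res:robustlite} it is both simpler and quantitatively much stronger, and in particular gives a bound independent of $d$, which is of interest in connection with the paper's first open problem (in this restricted range of $\beta$).
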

	This shows that the spectral gap of bounded-degree regular expanders is robust to such Lipschitz perturbations of the edge weights. We complement this with the following negative result demonstrating that the spectral gap can be very poor if the discrepancy between neighbouring edge weights is large compared with the degree $d$.
	
	\begin{proposition} [Corollary of \cref{pro:lower}]\label{pro:lowerspec}
		Let $G$ be a $d$-regular graph with diameter $D \geq 4$.
		Then, for any $\beta > 1$, there exists a $\beta$-Lipschitz edge-weighting $w : E \to \mathbb R^+$ such that the chain $P_w$ induced by $w$ satisfies 
		\[
		\gamma_{P_w}
		\le
		\beta^2 (d/\beta)^{\lfloor D/2 \rfloor-1}. 
		\]
	\end{proposition}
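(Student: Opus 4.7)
The plan is to engineer an edge-weighting whose stationary distribution is concentrated on two widely separated parts of $G$, so that a ball around one part acts as a sparse cut in the Markov-chain sense; Cheeger's inequality then converts this into the desired upper bound on $\gamma_{P_w}$.

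Choose vertices $u,v \in V(G)$ with $\dist(u,v) = D$, set $r \cq \lfloor D/2 \rfloor \ge 2$, and define the vertex potential $h(x) \cq \min(\dist(x,u), \dist(x,v))$. This $h$ is $1$-Lipschitz on $G$, vanishes at $u$ and $v$, and attains values at least $r$ somewhere in the middle of the graph. Set
\[
w(xy) \cq \beta^{-(h(x)+h(y))/2} \Qforall xy \in E(G).
\]
The $\beta$-Lipschitz condition follows because, for adjacent edges $xy$ and $yz$, the ratio $w(xy)/w(yz) = \beta^{(h(z)-h(x))/2}$, and $|h(z)-h(x)| \le \dist(x,z) \le 2$ gives $|\log_\beta w(xy) - \log_\beta w(yz)| \le 1$. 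The averaging in the exponent is essential: the more natural choice $\beta^{-\min(h(x),h(y))}$ can violate the Lipschitz property by another factor of $\beta$.

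Next I would apply Cheeger's inequality to $A \cq B_{r-1}(u)$, the ball of radius $r-1$ around $u$. Since $\dist(u,v) \ge 2r$, the balls $B_{r-1}(u)$ and $B_{r-1}(v)$ are disjoint, so one of them has $\piw$-mass at most $1/2$; by the $u$--$v$ symmetry of the construction we may assume it is $A$. Every edge in $\partial A$ joins a vertex at distance $r-1$ from $u$ to one at distance $r$, both at distance at least $r$ from $v$, so each such edge has weight $\beta^{-(r-1/2)}$, and a degree count gives $|\partial A| \le d\cdot(d-1)^{r-1} \le d^r$. On the other hand, $\piw(A) \ge \piw(u) = w(u)/(2W) = d\beta^{-1/2}/(2W)$, since $h(u) = 0$ and $h(y) = 1$ for every neighbour $y$ of $u$. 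Feeding these estimates into the form $\gamma_{P_w} \le 2\Phi(A) \le 2\bigl(\sum_{e\in\partial A} w(e)\bigr)/w(u)$ of Cheeger's inequality yields $\gamma_{P_w} \le 2(d/\beta)^{r-1}$. This is at most $\beta^2 (d/\beta)^{r-1}$ whenever $\beta \ge \sqrt{2}$; in the remaining range the right-hand side already exceeds $1$, so there is nothing to prove.

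The principal obstacle is designing a weighting that simultaneously (i) maintains stationary mass $\Theta(1/W)$ at both $u$ and $v$, (ii) decays to $\beta^{-r}$ at the midpoint of the graph, and (iii) satisfies the $\beta$-Lipschitz condition edge-by-edge; the averaging $(h(x)+h(y))/2$ in the exponent is what reconciles (ii) and (iii). The remainder is a routine Cheeger computation, together with a symmetry argument to pick whichever of $B_{r-1}(u)$ and $B_{r-1}(v)$ has $\piw$-mass at most $1/2$.
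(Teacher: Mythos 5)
Your proof is correct and essentially the paper's own argument: the proof of \cref{pro:lower} likewise fixes $u,v$ with $\dist(u,v)=D$, uses the geometrically decaying weighting $w(x,y)=\beta^{-\dist(\{x,y\},\{u,v\})}=\beta^{-\min(h(x),h(y))}$, takes as cut the ball of radius $\lfloor D/2\rfloor-1$ around $u$ (choosing whichever of the two disjoint balls has $\piw$-mass at most $1/2$), bounds the ergodic flow across it by the weight carried by the boundary sphere, and finishes with Cheeger, so your averaged exponent and explicit boundary-edge count are only implementation-level variations. Two minor remarks: your aside is mistaken, since the min-exponent weighting is itself $\beta$-Lipschitz (for $y\sim x\sim z$, both $\min(h(x),h(y))$ and $\min(h(x),h(z))$ lie within $1$ of $h(x)$), which is exactly why the paper can use it; and your dismissal of the range $1<\beta<\sqrt2$ tacitly assumes $\gamma_{P_w}\le 1$, which does hold here (since $D\ge 4$ there are non-adjacent vertices and $P_w$ has zero diagonal, so $\lambda_2\ge 0$) but should be said, or one can simply check that $\beta^2(d/\beta)^{\lfloor D/2\rfloor-1}\ge 2\ge\gamma_{P_w}$ in that range.
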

	
	When taken together, \cref{res:robustlite} and \cref{pro:lowerspec} imply a strong dichotomy result for Lipschitz-perturbed walks on bounded-degree regular expanders, where the spectral gap transitions from a constant (i.e., asymptotically bounded away from 0) to a polynomial in $1/n$, as $\beta$ ranges from a value sufficiently close to $1$ up to $2d$; the boundedness of the degree implies that the diameter $D$ is $\Theta(\log n)$.
	
	Equipped with \cref{res:robustlite}, we are able to prove the following bound on the cover time of the $\eps$-time-biased random walk on regular graphs. In the following theorems we denote by $\tetb(G)$ the expected cover time of the graph $G$,
	maximized over the starting vertex
	and
	minimized over all (time-dependent) strategies for the $\eps$-$\tbrw$.
	
	\begin{theorem}[Corollary of \cref{thm:cover}]\label{thm:covercheap}
		Let $G$ be any $d$-regular graph and $n$ sufficiently large. Then for any $0 \leq \eps \leq \gamma/128$,  
		\[
		\tetb(G) \leq 10^{10} \cdot d^{100/\gamma}\cdot n \cdot \min\{\eps^{-1}, \; \log n \}.
		\]
	\end{theorem}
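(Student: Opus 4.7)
The plan is to design, for each possible current set $U \subseteq V$ of unvisited vertices, an edge-weighting $w_U : E \to \mathbb{R}_+$ that is $\beta$-Lipschitz with $\beta = 1+O(\eps)$, such that (i) the induced reversible chain $P_{w_U}$ can be simulated exactly by the $\eps$-$\tbrw$ controller, and (ii) its stationary distribution $\pi_{w_U}$ places substantial mass on $U$. A natural candidate is
\[
w_U(uv) \;=\; \beta^{-(\dist(u,U)+\dist(v,U))/2},
\]
which is $\beta$-Lipschitz: any two edges sharing a vertex differ in the $\dist$-sum by at most~$2$, halved by the exponent.

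The simulation is direct. For $\beta$-Lipschitz weights on a $d$-regular graph, $P_{w_U}(v,u) \in [1/(\beta d),\;\beta/d]$. Choosing $\beta = 1/(1-\eps)$, the hypothesis $\eps \le \gamma/128$ forces $\beta \le 1+\gamma/64 \le 1+\gamma/32$, and one checks that the displayed range lies inside the simulable window $[(1-\eps)/d,\;(1-\eps)/d + \eps]$ of the $\eps$-$\tbrw$; the controller's randomised choice $q(\cdot\mid v)$ can thus be tuned so that the overall transition at each step is exactly $P_{w_U}(v,\cdot)$. Because $\beta \le 1+\gamma/32$, the robustness result (\cref{res:robustlite}) yields
\[
\gamma_{P_{w_U}} \;\ge\; 10^{-8}\cdot d^{-16/\gamma}.
\]

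To bound the cover time I would decompose the walk into phases: phase $k$ starts when exactly $k$ vertices remain unvisited and ends upon visiting a new one, during which the controller plays the strategy simulating $P_{w_{U_k}}$ for the current $U_k$. A standard hitting-time bound for reversible chains, $\max_v \Exu{v}{\tau_{U_k}} \lesssim \trel(P_{w_{U_k}})/\pi_{w_{U_k}}(U_k)$, together with the spectral-gap bound above, gives
\[
\tetb(G) \;\lesssim\; d^{16/\gamma}\sum_{k=1}^{n}\frac{1}{\pi_{w_{U_k}}(U_k)}.
\]
The $\log n$ factor arises from the crude uniform estimate $\pi_{w_U}(U)\gtrsim |U|/n$ (valid for any $\beta$ close to $1$), yielding $\sum_k 1/k \lesssim \log n$. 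The $1/\eps$ factor arises instead by exploiting that $w_U$ decays exponentially in $\dist(\cdot,U)$ at rate $\log\beta \asymp \eps$, so $\pi_{w_U}(U)$ is bounded below by a constant multiple of $\eps$ uniformly in $U$, giving a geometric sum of order $n/\eps$. Taking the better of the two estimates gives the claimed $\min\{\eps^{-1},\log n\}$.

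The main obstacle I anticipate is proving these stationary-mass lower bounds uniformly across all sets $U$ that may arise dynamically. This requires combining the Lipschitz structure of $w_U$ with expander-type ball-growth estimates derived from the underlying spectral gap~$\gamma$ of $G$, so as to control the distance profile $v\mapsto \dist(v,U)$. A secondary technical point is to verify that polynomial-in-$d$ factors compose to produce the claimed exponent $d^{100/\gamma}$ (rather than merely $d^{16/\gamma}$ inherited from \cref{res:robustlite}), which accounts for the slack introduced by the hitting-time bound and by stitching the two regimes $\eps^{-1}$ and $\log n$ together.
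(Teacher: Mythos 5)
Your overall framework (a $U$-biased Lipschitz weighting simulable by the $\eps$-$\tbrw$, the robustness theorem to get $\trel = O(d^{16/\gamma})$, and a phase-by-phase hitting-time bound $\max_v \Exu{v}{\tau_{U}} \lesssim \trel/\pi_{w_U}(U)$) is close in spirit to the paper, which uses the weighting $w(u,v)=(1-\eps)^{\max\{\dist(u,U),\dist(v,U)\}}$, the emulation argument of \cref{weightedetb}, and \cref{res:robust} inside \cref{lem:keylemma_cover}. However, there is a genuine gap at the single most important quantitative step: the claim that $\pi_{w_U}(U)$ is bounded below by a constant multiple of $\eps$ uniformly in $U$ is false. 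The weight decays like $(1-\eps)^{\dist(\cdot,U)}$, but the number of vertices at distance $i$ from $U$ can grow like $d^i$ (and does, in an expander), so the total weight is governed by $\sum_i |U_i|(1-\eps)^i \approx |U|\,(d(1-\eps))^{\log_d(n/|U|)}$, and the decay at rate $\eps$ can never beat volume growth at rate $\log d$ for $\eps \le 1$. The true bound (this is \cref{lem:stationary_boost}) is only
\[
\pi_{w_U}(U) \;\gtrsim\; \frac{1}{d}\left(\frac{|U|}{n}\right)^{1+\log(1-\eps)/\log d},
\]
i.e.\ a boost factor $(n/|U|)^{\Theta(\eps/\log d)}$ over the uniform mass $|U|/n$, which for $|U|=1$ and constant $\eps,d$ is still polynomially small in $n$, not $\Omega(\eps)$. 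Consequently your stated derivation of the $\eps^{-1}$ branch (``a geometric sum of order $n/\eps$'') does not follow as written.

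The correct mechanism, which your plan can absorb but does not contain, is that this \emph{mild} per-set boost compounds across phases in which $|U|$ shrinks: in the paper, the phase with $|U|\approx n2^{-k}$ costs $\approx \kappa\, n\, 2^{k\log(1-\eps)/\log d}$ (\cref{lem:keylemma_cover}), and summing the slowly-decaying geometric series gives $\min\{O(\log d/\eps),\log n\}$, with the extra $\log d$ absorbed into $d^{O(1/\gamma)}$ — this is where $\min\{\eps^{-1},\log n\}$ and the exponent $100/\gamma$ actually come from. If you redo your per-vertex phase sum with the correct stationary bound, you get $\sum_{k=1}^n 1/\pi_{w_{U_k}}(U_k) \lesssim d\, n^{1-\delta}\sum_k k^{\delta-1} \approx d\,n/\delta$ with $\delta = -\log(1-\eps)/\log d$, i.e.\ $O(dn\log d/\eps)$, so your decomposition is salvageable; but the uniform $\Omega(\eps)$ claim is precisely the non-trivial content and must be replaced. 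Two secondary points: the hitting-time bound $\max_v\Exu{v}{\tau_A}\lesssim \trel/\pi(A)$ needs laziness/aperiodicity (the paper switches to $\tilde Q=(Q+I)/2$ and argues via return probabilities over $\sqrt n$-step epochs), and one must separately dispose of the regimes $d\le 2$ and $\Psi_G = O(\log d/\log n)$, where the claimed bound is weaker than the trivial $O(n^2)$ SRW cover time.
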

	
	By defining $0^{-1} = \infty$ and $\min\{\infty, x\} = x$ for all $x$, we can set $\eps = 0$ in \cref{thm:covercheap} to obtain a bound on the expected cover time of the (usual) simple random walk. Doing so recovers the classical bound of $O(n \log n)$ for the $\srw$ on bounded-degree expanders, see \cite{aldousfill} or otherwise.

	More interestingly, for \emph{any} constant $\epsilon > 0$, applying the previous theorem to bounded-degree regular expanders yields an improvement over the best-known bound of $O(n\log\log n )$~\cite{ETB}. 
	
	\begin{corollary}\label{cor:timebiasedcover}
		For any bounded-degree regular expander graph $G$, and any constant $\eps > 0$,  \[\tetb(G) = \Theta(n).\] 
	\end{corollary}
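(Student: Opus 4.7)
The plan is to establish the upper and lower bounds separately. The upper bound $\tetb(G) = \BO{n}$ follows from \Tr{thm:covercheap} together with a simple monotonicity argument in the bias parameter, while the matching lower bound $\tetb(G) = \Omega(n)$ is a trivial counting observation.

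For the upper bound, I would first observe that the cover time is monotone non-increasing in the bias parameter: a controller equipped with bias $\eps$ can simulate any strategy available with bias $\eps' \leq \eps$ by, at each biased step (which occurs with probability $\eps$), exercising its choice with probability $\eps'/\eps$ and otherwise selecting a uniformly random neighbour, yielding an effective bias of $\eps'$. Since $G$ is a bounded-degree regular expander, there exist constants $d_{\max}$ and $\gamma_0 > 0$, both independent of $n$, with $d \leq d_{\max}$ and $\gamma \geq \gamma_0$. Setting $\eps_0 \cq \min(\eps,\gamma_0/128) > 0$, the monotonicity above shows that the expected cover time for the $\eps$-$\tbrw$ is bounded above by that for the $\eps_0$-$\tbrw$. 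Applying \Tr{thm:covercheap} with bias $\eps_0 \leq \gamma/128$ then yields
\[
\tetb(G) \leq 10^{10}\cdot d_{\max}^{100/\gamma_0}\cdot n\cdot \eps_0^{-1} = \BO{n},
\]
where the hidden constant depends only on $d_{\max}$, $\gamma_0$, and $\eps$, and we have used $d \geq 1$ together with $100/\gamma \leq 100/\gamma_0$.

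For the matching lower bound, I would use the fact that any walk --- biased or not --- can visit at most one new vertex per step. Starting from any initial vertex, at most $t+1$ distinct vertices have been visited after $t$ steps, so reaching all $n$ vertices requires $t \geq n-1$ deterministically, regardless of starting vertex or strategy. Hence $\tetb(G) \geq n-1 = \Omega(n)$, and combining with the upper bound gives $\tetb(G) = \BT{n}$.

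I do not anticipate any real obstacle here: once \Tr{thm:covercheap} is in hand, the only point worth addressing is the restriction $\eps \leq \gamma/128$ in its hypothesis, which the monotonicity in $\eps$ immediately sidesteps by reducing an arbitrary constant $\eps$ to the admissible value $\eps_0$.
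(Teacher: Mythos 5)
Your proposal is correct and matches the paper's route: the upper bound is exactly the intended application of \cref{thm:covercheap} after reducing the bias to $\eps_0=\min(\eps,\gamma_0/128)$ via monotonicity (the paper invokes the same monotonicity when stating its cover-time theorems), and the $\Omega(n)$ lower bound is the same trivial one-new-vertex-per-step observation the paper relies on. No gaps.
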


	Georgakopoulos et al.~\cite{ITCSpaper} introduced the choice random walk (CRW), which at each step is offered a choice between two independently and uniformly sampled neighbours as the next vertex. The authors proved a bound of $O(n\log\log n)$ on the cover time of bounded-degree expanders and asked whether the cover time is $\Theta(n)$. However, by \cite[Proposition~1]{ITCSpaper}, for any graph $G$ of maximum degree $d$, and for any $ \eps \leq 1/d$, the
	choice random walk can emulate the $\eps$-TBRW. Thus by emulating the $\eps$-TBRW with a sufficiently small constant $0<\eps \leq 1/d$ and applying \cref{cor:timebiasedcover}, we resolve this natural open problem as a corollary of \cref{thm:covercheap}.  
	
	\begin{corollary}\label{cor:choicecover}
		For any bounded-degree regular expander graph $G$, 
		\[
		\tcrw(G) = \Theta(n),
		\]
		where $\tcrw(G)$ is the expected cover time of the choice random walk (see \cite{POTC}).
	\end{corollary}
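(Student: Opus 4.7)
The plan is to combine two ingredients that have already been assembled: the emulation fact of Georgakopoulos et al.~\cite[Proposition~1]{ITCSpaper}, and Corollary~\ref{cor:timebiasedcover} (the constant-$\eps$ version of our cover-time bound). For the upper bound, since $G$ is bounded-degree, say with maximum degree $d = O(1)$, I would set $\eps := 1/d$, which is a fixed positive constant depending only on $d$. The cited emulation result says that, because $\eps \leq 1/d$, any $\eps$-$\tbrw$ strategy on $G$ can be realised by a CRW controller: given the two uniformly random neighbours offered at each step, the CRW controller can, with probability $\eps$, select the neighbour the $\eps$-$\tbrw$ strategy would pick, and otherwise reproduce an SRW step. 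In particular, the optimal CRW strategy does no worse than the CRW strategy that emulates the optimal $\eps$-$\tbrw$ strategy, so $\tcrw(G) \leq \tetb(G)$ at this $\eps$.

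Applying Corollary~\ref{cor:timebiasedcover} with the constant $\eps = 1/d > 0$ then gives $\tetb(G) = O(n)$ on every bounded-degree regular expander, and hence $\tcrw(G) = O(n)$. For the matching lower bound, I would appeal to the trivial observation that any process visiting all $n$ vertices of $G$ must take at least $n-1$ steps, which yields $\tcrw(G) \geq n - 1 = \Omega(n)$ unconditionally. Combining the two bounds produces $\tcrw(G) = \Theta(n)$, as stated.

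There is essentially no technical obstacle here: everything substantive has been absorbed into \cref{res:robustlite}, \cref{thm:covercheap}, and \cref{cor:timebiasedcover}, together with the emulation lemma. The only point worth checking carefully is that the emulation in \cite[Proposition~1]{ITCSpaper} accommodates \emph{time-dependent} strategies, so that it can be composed with the time-dependent strategy underlying our $\tetb$ bound; this is immediate from the step-by-step nature of the emulation described above, since the CRW controller can simulate whatever strategy the $\eps$-$\tbrw$ controller uses at each step, including strategies that depend on the entire history.
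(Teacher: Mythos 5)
Your proposal is correct and is essentially the paper's own argument: the paper likewise derives this corollary by letting the CRW emulate the $\eps$-$\tbrw$ with a constant $0<\eps\le 1/d$ via \cite[Proposition~1]{ITCSpaper} and then applying \cref{cor:timebiasedcover}, with the $\Omega(n)$ direction being the trivial bound. Your explicit choice $\eps=1/d$ and the remark that the emulation is step-by-step and hence compatible with time-dependent strategies are consistent with how the paper uses these ingredients.
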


	The linear upper bound for the $\eps$-$\tbrw$ holds for \emph{any} constant $\epsilon > 0$. We can also establish the following lower bound, which states that the cover time of any $d$-regular graph by the $\eps$-$\tbrw$ is $\omega(n)$ if $\eps = o(1/\log^2 d)$; here, $d := d(n)$ is allowed to depend on $n$.
	
	\begin{restatable}{theorem}{covlower}\label{thm:coveringlower}
		For any constant $C \geq 1$ there exists a constant $c=c(C)>0$ such that, for any $d$-regular graph $G$  and  $\eps \leq c/\log^2 d$, we have
		\[ \tetb(G) \geq Cn.\]  
		In particular, if $\eps = o(1/\log^2 d)$, where $d$ may depend on $n$, then
		\[ \tetb(G) = \omega(n). \]
	\end{restatable}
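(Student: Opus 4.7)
The plan is to prove a general boosting lemma that controls how much the probability of any event can be amplified, relative to the SRW, by using an adaptive $\eps$-$\tbrw$ strategy, and then apply it to the event that the graph is covered by time $Cn$.

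The central ingredient is an inequality of the form
\[
\Pru{\tbrw,\sigma}{A} \le \exp\!\bigl(C_0\, \eps\, t\, \log^2 d\bigr)\cdot \Pru{\srw}{A},
\]
valid for any event $A$ on walks of length $t$, any adaptive strategy $\sigma$, and some universal constant $C_0$. Writing $D$ for the Radon--Nikodym derivative of the $\sigma$-path-measure with respect to the SRW path measure, $D$ factors into $t$ per-step likelihood ratios: each step contributes $1+\eps(d-1)$ at a strategy ``match'' and $1-\eps$ at a ``miss.'' Under the SRW, matches at each step have probability $1/d$ independently of the past, so $\Exu{\srw}{D^p}$ admits a closed product form. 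Optimizing over the exponent $p$ in Markov's inequality $\Pru{\srw}{D \ge \alpha} \le \alpha^{-p}\Exu{\srw}{D^p}$ produces the stated boost; the two factors of $\log d$ arise because the per-step log-likelihood at a match is itself of order $\log d$, and the optimal exponent $p$ scales as $\log d$, reflecting the rarity (probability $1/d$) of the matches.

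Alongside this, I would show that for every $d$-regular graph on $n$ vertices and constant $C\ge 1$,
\[
\Pru{\srw}{\tau_{\mathsf{cov}} \le Cn} \le \exp(-c_C n)
\]
for some $c_C>0$ depending only on $C$. For graphs that are not expanders, $\Ex[\tau_{\mathsf{cov}}]\gg Cn$ already and the estimate follows directly from Markov's inequality. For expanders---the genuinely hard case---it follows from the near-independence of hitting times of different vertices: pick a Matthews-style spread-out subset $S$ of $\Theta(n)$ vertices with pairwise hitting times $\Omega(n)$, use exponential tails on individual hitting times (via the spectral gap), and combine across $v\in S$ using negative correlation of the non-visit indicators available for reversible chains.

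Combining the two steps by setting $A = \{\tau_{\mathsf{cov}}\le Cn\}$ and $t = Cn$ yields
\[
\Pru{\sigma}{\tau_{\mathsf{cov}}\le Cn} \le \exp\!\bigl(C_0 C \eps n\log^2 d - c_C n\bigr) \le \tfrac12
\]
whenever $\eps \le c_C/(2 C_0 C \log^2 d) =: c(C)/\log^2 d$. Hence $\Exu{\sigma}{\tau_{\mathsf{cov}}} \ge Cn/2$, and replacing $C$ by $2C$ throughout yields the desired bound $\tetb(G) \ge Cn$. The ``in particular'' assertion follows by letting $C = C(n) \to \infty$ slowly enough that still $\eps \le c(C(n))/\log^2 d$ eventually.

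The main obstacle is proving the boosting lemma with the correct $\log^2 d$ dependence: a naive bound that uses only the per-step maximum likelihood ratio $1+\eps(d-1)$, or the per-step KL divergence, yields a boost of order $\exp(\eps t\cdot d)$ or $\exp(\eps t \log d)$ respectively, which would restrict the theorem to $\eps\ll 1/d$ or $\eps\ll 1/\log d$. Extracting the additional savings requires carefully balancing the large but rare ``match'' contributions to $\log D$---choosing the Markov exponent $p$ to scale with $\log d$ rather than keeping it constant---and constitutes the technical core of the argument.
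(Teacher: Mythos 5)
Your central ``boosting lemma'' is false as stated, and this is the crux of the gap. Take $A$ to be the event that the walk follows one fixed path $u=x_0,x_1,\dots,x_t$ in a $d$-regular graph: then $\Pru{\srw}{A}=d^{-t}$, while the strategy that always biases fully towards the next vertex of the path gives $\Pru{\tbrw}{A}\ge\bigl((1-\eps)/d+\eps\bigr)^t$, so the boost factor is at least $(1+\eps(d-1))^t$. With $\eps\asymp 1/\log^2 d$ this is $\exp\bigl(\Theta(t\log d)\bigr)$ for large $d$, which overwhelms your claimed factor $\exp\bigl(C_0\,\eps\,t\log^2 d\bigr)=\exp(O(t))$. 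No correct version of the lemma can be purely multiplicative in $\Pru{\srw}{A}$: the mechanism you propose cannot deliver it either, since Markov's inequality applied to the likelihood ratio $D$ only yields a bound of the form $\Pru{\tbrw}{A}\le\alpha\,\Pru{\srw}{A}+\Exu{\srw}{D\,\mathbf{1}_{D\ge\alpha}}$ (an additive error), while a H\"older/moment argument yields $\Pru{\tbrw}{A}\le \Exu{\srw}{D^{r}}^{1/r}\,\Pru{\srw}{A}^{1-1/r}$, i.e.\ the SRW probability raised to a power strictly less than $1$. The paper's bound (its Theorem on boosting) has exactly this shape, $q_{u,S}\le\exp(4t/d^{\eta})\,p_{u,S}^{\eta/(1+\eta)}$ for $\eps\le d^{-2\eta}$, proved by an iterated power-mean argument on the trajectory tree; your moment computation with exponent scaling like $\log d$ is morally the same calculation (and your heuristic for where $\log^2 d$ comes from matches the paper's choice $\eta=\log\log d/\log d$). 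But because the exponent on $p_{u,S}$ is degraded, your final arithmetic ``$\exp(C_0C\eps n\log^2 d-c_C n)\le\tfrac12$'' does not survive as written: one must instead balance the prefactor $\exp(4t/d^{\eta})$ against $p_{u,S}^{\eta/(1+\eta)}$, which requires the SRW covering probability to be \emph{exponentially} small and a case distinction between small and large $d$ (small $d$ handled by the crude bound $(1+\eps(d-1))^t$), exactly as in the paper's proof.

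The second ingredient also has a genuine hole. For non-expanders, Markov's inequality gives no information about the lower tail $\Pru{\srw}{\tau_{\mathsf{cov}}\le Cn}$: a huge expected cover time is perfectly compatible with covering in $Cn$ steps with constant probability, so ``$\Ex{\tau_{\mathsf{cov}}}\gg Cn$ hence done'' is not an argument. For the expander case, negative correlation of the non-visit indicators is not an available fact for reversible chains, and making the ``spread-out set plus exponential tails'' sketch rigorous for \emph{all} $d$-regular graphs (with $d$ possibly growing) is essentially the content of the Dubroff--Kahn theorem that the paper invokes, which gives $\Pr{\{X_0,\dots,X_{Cn}\}=V}<e^{-\alpha(C)n}$ for every graph. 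This part of your proposal is fixable simply by citing that result, but as written it is not a proof; the boosting lemma, by contrast, needs to be restated and reproved in the power-of-$p$ form before the theorem follows.
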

	
	This is the first non-trivial lower bound established for the cover time of the $\eps$-$\tbrw$. \cref{thm:coveringlower} shows that the restriction on $\eps$ in our upper bound on the cover time, described in \cref{cor:timebiasedcover}, is best possible -- that is, to cover a bounded-degree regular expander in linear time it is necessary and sufficient to have a fixed constant bias $\eps>0$. 
	
	\newcommand{\traj}{S}
	
	To prove \cref{thm:coveringlower} we establish \cref{nonregboundcheap}, which bounds the probability of any finite-time event under the $\eps$-$\tbrw$ by a function of the corresponding probability under the SRW (simple random walk). We first need some notation.   
	For $u\in V(G)$, $t \geq 0$, and any set $\traj$ of $t$-step trajectories, we write $p_{u,\traj}$ for the probability that the trajectory of a $t$-step $\srw$ starting from $u$ is in $S$. Let $q_{u,\traj}(\eps)$ be the corresponding probability under the $\eps$-$\tbrw$; while this probability will generally depend on the particular strategy used, we will omit this dependence for the ease of notation.
	
	\begin{theorem}[Simplified Version of \cref{nonregbound}] \label{nonregboundcheap}
		Let $G$ be any $d$-regular graph, $u\in V$, $\traj$ be a set of trajectories of length $t>0$ starting from $u$, and $\eps \leq 1/d^{2\eta}$ for any $0< \eta \leq 1$. Then, for any strategy taken by the $\eps$-$\tbrw$,
		\[
		q_{u,\traj}(\eps)
		\le
		\exp\bigl( 4 t / d^{\eta} \bigr)
		\cdot
		(p_{u,\traj})^{\eta/(1 + \eta)}. 
		\] 
	\end{theorem}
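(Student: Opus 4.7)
The plan is to view the $\eps$-$\tbrw$ law $Q$ (under an arbitrary fixed strategy) and the $\srw$ law $P$ as measures on the space of $t$-step trajectories from $u$, and to reduce the statement to a moment bound on the Radon--Nikodym derivative $Z=dQ/dP$. Since each $\eps$-$\tbrw$ transition from $V_{i-1}$ to a fixed neighbour $v$ has probability $(1-\eps)/d+\eps\cdot\mathbf{1}[\sigma_i=v]$, where $\sigma_i$ is the neighbour chosen by the strategy from the history, the derivative factorises as $Z=\prod_{i=1}^{t}W_i$ with each factor $W_i\in\{1-\eps,\,1+\eps(d-1)\}$ according to whether step $i$ matches the strategy. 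Writing $q_{u,\traj}(\eps)=\Exu{P}{\mathbf{1}_\traj\cdot Z}$ and applying H\"older's inequality with conjugate exponents $(1+\eta)/\eta$ and $1+\eta$ gives
\[
q_{u,\traj}(\eps) \;\leq\; (p_{u,\traj})^{\eta/(1+\eta)} \cdot \Exu{P}{Z^{1+\eta}}^{1/(1+\eta)},
\]
so the prefactor is already in the desired form, and it remains to bound the moment.

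To evaluate $\Exu{P}{Z^{1+\eta}}$ exactly, I would exploit that under $P$ the next vertex $V_i$ is uniform over the $d$ neighbours of $V_{i-1}$ and, given $V_{i-1}$, is independent of the history-measurable strategy choice $\sigma_i$; hence the match indicators $\mathbf{1}[\sigma_i=V_i]$ form an i.i.d.\ sequence of $\ber{1/d}$ variables, by an iterated-conditioning argument. The moments of the factors $W_i^{1+\eta}$ therefore multiply, yielding $\Exu{P}{Z^{1+\eta}} = f(\eps)^t$ with
\[
f(\eps) \;=\; (1-\eps)^{1+\eta}\!\left(1-\tfrac{1}{d}\right) + \tfrac{1}{d}\,\bigl(1+\eps(d-1)\bigr)^{1+\eta}.
\]

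The technical core is the estimate $f(\eps)\leq 1+4/d^\eta\leq\exp(4/d^\eta)$ whenever $\eps\leq d^{-2\eta}$ and $\eta\in(0,1]$. A Taylor expansion at $\eps=0$ shows the linear term cancels by construction, leaving a quadratic contribution of order $\eps^2 d$, which is not uniformly small under the hypothesis alone. I would therefore split into two regimes. If $\eps d\leq 1$, then $1+\eps(d-1)\leq 2$, so $(1+\eps(d-1))^{1+\eta}/d \leq 2^{1+\eta}/d \leq 4/d^\eta$. If $\eps d>1$, then $1+\eps(d-1)\leq 2\eps d$, so $(1+\eps(d-1))^{1+\eta}/d \leq 2^{1+\eta}\eps^{1+\eta}d^\eta \leq 4\,d^{-\eta-2\eta^2}\leq 4/d^\eta$, using $\eps\leq d^{-2\eta}$. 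Combining with $(1-\eps)^{1+\eta}\leq 1$ in both cases gives $\Exu{P}{Z^{1+\eta}}^{1/(1+\eta)} \leq \exp\!\bigl(4t/((1+\eta)d^\eta)\bigr) \leq \exp(4t/d^\eta)$, and substituting into the H\"older bound yields the theorem.

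I expect the case split above to be the most delicate step: a crude Taylor bound forces the auxiliary requirement $d^{1-3\eta}=O(1)$ and therefore only covers $\eta\geq 1/3$, whereas the two-regime split handles the full range $\eta\in(0,1]$. The choice of H\"older exponents is also pinned down by the statement, since any other pairing either breaks the tensorisation that makes the moment computable or shifts the exponent of $p_{u,\traj}$ away from the target $\eta/(1+\eta)$.
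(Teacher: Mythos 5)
Your argument is correct, and it takes a genuinely different route from the paper. The paper proves this via the trajectory-tree method: it encodes conditional success probabilities $q_{\mathbf{x},S}$ on the nodes of the tree of partial trajectories, bounds the one-step ``biased operator'' by a power mean using H\"older plus Schur-convexity (its Lemma~5.2), and shows a potential $\Upsilon^{(i)}$ built from the $s$-th powers of these conditional probabilities is non-decreasing from the root (which gives $q_{u,S}^s$) to the leaves (which give $p_{u,S}$ up to the factor $\exp(4st/d^{\eta})$). You instead apply H\"older exactly once, globally, to $q_{u,S}(\eps)=\mathbb{E}_P[\mathbf{1}_S\, Z]$ with $Z=dQ/dP$, and reduce everything to the $(1+\eta)$-moment of the likelihood ratio, which tensorises by iterated conditioning because $\mathbb{E}_P[W_i^{1+\eta}\mid\mathcal F_{i-1}]=f(\eps)$ is history-independent on a regular graph; your two-regime estimate of $f(\eps)$ (split at $\eps d\le 1$ versus $\eps d>1$, using $\eps\le d^{-2\eta}$ in the second case) is the exact analogue of the paper's per-step bound and gives the same constant $4/d^{\eta}$. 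The net effect is the same inequality, but your change-of-measure formulation avoids the tree formalism and backward induction, and makes transparent why the exponent $\eta/(1+\eta)$ and the multiplicative error per step arise; the paper's formulation, by carrying vertex-dependent factors $\kappa(y_j)$ along each trajectory, extends with no extra effort to irregular graphs (its full Theorem~5.1 with $\dmin,\dmax$), which in your setup would require replacing the exact identity $\mathbb{E}_P[Z^{1+\eta}]=f(\eps)^t$ by a conditional bound depending on local degrees. One small point to make explicit: the $\eps$-$\tbrw$ strategy may randomise, i.e.\ use a bias vector $\mathbf b$ rather than a single neighbour $\sigma_i$, so $W_i=1-\eps+d\eps\, b_{V_i}$ need not lie in the two-point set you state; this is harmless either because the success probability is linear in each $\mathbf b(\mathbf x)$, so an optimal strategy can be taken deterministic, or directly because $\tfrac1d\sum_v(1-\eps+d\eps b_v)^{1+\eta}\le f(\eps)$ by convexity/majorisation (the same Schur-convexity step the paper uses), so your moment bound becomes an inequality rather than an identity and the proof goes through unchanged.
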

	
	The main strength of this theorem lies in its generality, not just holding for any graph, but also any event that can be described using a set of trajectories. For example, such events include covering the graph within $t$ steps, visiting a given set of vertices within $t$ steps, or returning to the start vertex within $t$ steps.

	\subsection{Proof Outlines and Techniques}\label{sec:outline}

	In this section we outline the proofs of our main results and the techniques used and introduced.
	
	\paragraph{Robustness of the Spectral Gap, Lower Bound (\cref{res:robustlite}).}
	Let us first outline the proof of this positive result, which is a lower bound on the spectral gap of $P_{w}$ in terms of the spectral gap of $P$, corresponding to a simple random walk on a regular graph $G$ with unit~edge-weights.
	Even though our result technically also applies to graphs with (vanishingly) small expansion and/or unbounded degree, we restrict ourselves here to $G$ being a bounded-degree expander.
	
	First, we switch from the one-step Markov chain $P_{w}$ to an ``amplified'' Markov~chain with transition matrix $P_w^{2K}:=(P_{w})^{2K}$; that is, we combine $2K$ steps of $P_w$ into one step. Here, $K$ is (roughly) inversely proportional to the vertex expansion of $G$. The point of this step~is to boost the vertex expansion of this Markov chain to a sufficiently large constant $\alpha$---namely,~to~$\alpha=e^2$.
	
	In order to prove that the spectral gap of $P_w^{2K}$ remains constant under the Lipschitz condition with small constant $\beta \geq 1$, we will prove the following statement.
	\begin{equation}\label{eq:claim}
		\textit{Any set $S$ with $\pi(S) \in [0,1/2]$ has a constant edge conductance with respect to $P_w^{2K}$.}
	\end{equation}
	This then implies that the edge conductance of $P_w^{2K}$ is bounded below by a constant, and the corresponding result for the spectral gap of $P_w^{2K}$ follows by Cheeger's inequality. Finally, the spectral gaps of $P_{w}$ and $P_w^{2K}$ can be easily related, as $P_w^{2K}=(P_{w})^{2K}$.
	
	To show \eqref{eq:claim}, we first partition the vertices $S$ into buckets $(V_i)_{i \geq 1}$ with exponentially decreasing $\pi(\cdot)$ values; see \cref{def:Vipartition}.
	Since $\beta$ is bounded by some sufficiently small constant which is strictly greater than $1$, and by our choice of the partition $(V_i)_{i \geq 1}$, it follows that any vertex in $V_i$ can only be connected to vertices in $V_{i-1}, V_{i}$ or $V_{i+1}$; see \cref{lem:robust:lip-neighbours}.
	Also, due to the boosted expansion in $P_w^{2K}$, the set $S \cap V_{i}$ ``expands'' unless we have substantially more vertices in $S \cap V_{i+1}$, which in turn necessitates $\pi(S \cap V_{i+1}) > \pi(S \cap V_{i})$. However, as $\pi(S)$ is bounded, this cascading argument must stop eventually, which leaves us with 
	a $V_j$, for some $j \geq i$, where $\pi(S \cap V_j)$ is sufficiently large compared with $\pi(S \cap V_{j+1})$, which implies a sufficiently large expansion. This is formalized in \cref{res:robust:B2(S)-S}, and an illustration of the construction of these subsets is given in \cref{fig:illustration}. This result, combined with suitable bounds on the stationary mass, can then be translated to a lower bound on the ergodic flow; see \cref{res:robust:Q(Sl)>pi(Sl)}.
	By aggregating the contribution to the ergodic flow over suitable subsets $S \cap V_{j}$, we can then obtain a lower bound on the edge conductance of $P_w^{2K}$ for any subset $S \subseteq V$.

	\paragraph{Robustness of the Spectral Gap, Upper Bound (\cref{pro:lowerspec}).}
	
	The proof of this negative result, which says that the mixing time can be polynomial if $\beta \ge 2d$, is less involved and also applies to a larger class of graphs.
	In order to construct a $\beta$-Lipschitz edge-weighting with small spectral gap, we simply take two vertices $u,v \in V$ with $\dist(u,v)$ as large as possible, i.e., equal to the diameter $D$. Then, we apply a weighting scheme that assigns weight 
	\[
	w(x,y)
	:=
	\beta^{-\dist(\{x,y\},\{u,v\})}
	\quad \text{to any} \quad
	\{x,y\} \in E(G). 
	\]
	This weighting scheme creates a strong bias towards the two vertices $u$ and $v$, and thereby results into a bottleneck between the two. More formally, we can prove that once $\beta$ exceeds a suitable value depending on the degree $d$, the edge conductance of $P_w$ is at most polynomial in $1/n$. Hence, by Cheeger's inequality, also the spectral gap is at most polynomial in $1/n$, implying that the mixing time is at least polynomial in $n$.

	\paragraph{Upper Bound on Cover Times of the Time-Biased Walk~(\cref{thm:covercheap}).}
	
	Our strategy to cover all $n$ vertices in $\Theta(n)$ expected time, first divides the process into $\log_2 n$ consecutive phases, each of which halves the number of unvisited vertices. In the following, we focus on the analysis of one such phase. We fix $\varepsilon>0$ to be a sufficiently small constant, which we may assume since increasing $\varepsilon$ merely permits more bias. 
	
	At the beginning round $t$ of each phase, we fix the set of unvisited vertices $U=U(t)$. Then we consider a time-independent, biased random walk towards the set $U$, which we call $Q:=Q(U,\eps)$, defined by the following edge weights:
	\begin{equation*}
		\label{edgeweights_rep}
		w(u,v)
		:=
		(1-\eps)^{ \max\{ \dist(u,U), \,\dist(v,U) \}}
		\quad \text{for any} \quad
		\{u,v\} \in E(G).
	\end{equation*}
	This strategy ensures that the stationary distribution of each vertex $u\in U$ is ``boosted'' compared with $1/n$ (its value in the unweighted regular graph $G$), and the effect is larger the smaller $U$~is:
	\[
	\pi_{Q}(u)
	\geq
	\frac{1}{2d|U|} \cdot \left( \frac{|U|}{n} \right)^{1+\frac{\log(1-\eps)}{\log(d)}}
	=
	\frac1{2dn}
	\left( \frac{n}{|U|} \right)^{-\log(1-\eps)/\log d};
	\]
	see \cref{lem:stationary_boost}.
	In order to benefit from this boosted stationary probability, we first use some time steps in order to mix the biased random walk. Here, we rely on our earlier robustness result, which implies that if $\eps>0$ is a sufficiently small constant, then the spectral gap of this weighted random walk $Q:=Q(U,\eps)$ is still constant; this in turn implies an $O(\log n)$ mixing time. By alternating between consecutive segments of length $\sqrt{n}$ for mixing and covering vertices in $U$, we finally obtain the following upper bound on the expected number of rounds until the number of unvisited vertices is halved:
	\begin{align}
		\kappa\cdot \left( \frac{n}{|U|} \right)^{\log(1-\eps)/\log d} \cdot n, \label{eq:phase_time}
	\end{align}
	where $\kappa > 0$ is a (large) constant; see \cref{lem:keylemma_cover} for more details. 
	
	The proof is naturally concluded by aggregating the upper bounds on the expected time needed for each phase \cref{eq:phase_time}, which sum to $O(n)$.
	It is crucial that the $\eps$-$\tbrw$ is allowed to have a time-dependent bias, as in each new phase we need to update the set of unvisited vertices $U$.

	\paragraph{Lower Bound on ``Boosting'' Probabilities by the Time-Biased Walk~(\cref{nonregboundcheap}).}
	To prove this result we utilise the ``trajectory-tree'' method pioneered in \cite{ITCSpaper,POTC,ETB}, however we apply this in the opposite direction to previous works, presenting additional challenges. The trajectory-tree $\mathcal{T}$ encodes all possible trajectories of length $t$ in a graph $G$ started from a vertex $u$ using a tree $\mathcal{T}$ of height $t$. Each node in the tree $\mathcal{T}$ at distance $\ell$ from the root represents a unique trajectory of length $\ell$ in $G$ started from $u$. This enables us to consider any event $T$ which can be described using a set of trajectories of length $t$ from a given vertex $u$.  
	
	We now summarise the key steps of the trajectory-tree method at a high level.
	
	\begin{enumerate}
		\item 
		Given any set $\traj$ of trajectories of length $t$ starting from $u$, we can represent $\traj$ using the leaves of $\mathcal T$ by assigning each leaf weight $1$ if the corresponding trajectory is in $\traj$, and $0$ otherwise.
		Any time-dependent strategy for the $\eps$-time-biased random walk can then be represented on the trajectory tree as internal node weights, where the weight of a node $\mathbf{x}$ in the tree is interpreted as the probability of event $\traj$ holding for the $\eps$-$\tbrw$, conditional on the walk having thus far taken the trajectory represented by the internal node $\mathbf{x}$.
		
		\item 
		We consider a function from $[0,1]^d$ to $[0,1]$ which correctly encodes the probability of the event $\traj$, conditional on the current trajectory (node), as a function of the conditional probabilities of the $d$ trajectories which extend the current one (the children of the current node).
		In our case, we take the \textit{biased operator} defined for $\mathbf{v}=(v_1, \dots, v_d)\in [0,1]^d$~by 
		\begin{equation*}\ts 
			\operatorname{B}_{\eps,\mathbf{b}}\left(\mathbf{v}  \right) := \sum_{i=1}^d \left(\frac{1 - \eps }{d} + \eps b_i\right)  \cdot  v_i, 
		\end{equation*}  where $\mathbf{b}=(b_1, \dots, b_d)$ is a probability vector encoding the strategy of the walk. 
		
		\item\label{itm:keyineq} The key step is to find a way to bound the function above (in this instance, $\operatorname{B}_{\eps,\mathbf{b}}$) in terms of a power-mean of its input and some other factors. 
		More precisely, we show that
		\[
		\operatorname{B}_{\eps,\mathbf{b}}(\mathbf{v})
		\le
		e^{4d^{-\eta}} \cdot M_{(1+\eta)/\eta}(\mathbf{v})
		\Quad{for all}
		\mathbf v \in [0,1]^d,
		\] 
		where $\eps = d^{-2\eta}$, for any $0<\eta:=\eta(d)\leq 1$, and any probability vector $\mathbf{b}$;
		see \cref{lem:conv}.
		
		\item 
		We define an appropriate potential function $\Upsilon^{(i)}$, which has the following properties:
		\begin{enumerate}[(i)]
			\item\label{itm:one} 
			$\Upsilon^{(0)}$ is a function of the probability of event $T$ under the $\eps$-TBRW;
			
			\item 
			$\Upsilon^{(t)}$ is a function the probability of event $T$ under the simple random walk;
			
			\item\label{itm:drop} 
			$\Upsilon^{(i)}\leq \Upsilon^{(i+1)}$ for all $0 \leq i \leq t-1$.
		\end{enumerate}
		If the potential function has been designed correctly (as in our case) then \eqref{itm:drop} will follow from the inequality shown in step \eqref{itm:keyineq}.
		From properties \eqref{itm:one}-\eqref{itm:drop}, we have $\Upsilon^{(0)} \leq \Upsilon^{(t)} $, and thus have bound on the probability of $\traj$ under the $\eps$-TBRW by a function of the corresponding probability under a simple random walk, establishing the result.
	\end{enumerate}
	
	As mentioned above, this approach was previously used to prove lower bounds, where a specific (greedy) strategy can be fixed. In our case, we have to argue over all strategies. This, combined with the reverse in direction (effecting Step \eqref{itm:keyineq}), leads to a more intricate potential function and power-mean inequality compared with previous applications of the method.  
	
	\paragraph{Lower Bound on the Cover Time of the Time-Biased Walk~(\cref{thm:coveringlower}).}
	
	\cref{thm:coveringlower} is proved using the full version of \cref{nonregboundcheap} (i.e., \cref{nonregbound}) in combination with Dubroff and Kahn~\cite[Theorem~1.1]{dubroff2021linear} (stated as \cref{thm:dubkahn}) which shows that the $\srw$ is exponentially unlikely to cover any graph in linear time. 
	
	To begin we let $\traj$ to be the event that $G$ is covered by a walk of length $t = 3Cn$, where $C>0$ is a constant, from an arbitrary vertex $u$.
	\cref{thm:dubkahn} then gives an $\alpha := \alpha(C) > 0$ such that
	\begin{equation}\label{eq:srwprob}
		p_{u,\traj}
		\leq
		e^{-\alpha t}.
	\end{equation}
	The next step is to use \cref{nonregbound} to bound the probability $q_{u,\traj}$ of the same event under the $\eps$-$\tbrw$ as a function of $p_{u,\traj}$ (the SRW probability).
	With the right choice of parameters this probability is less than $1/2$. Thus, the expected time to cover the graph from $v$ is at least \[3Cn\cdot 1/2 >Cn .\]  
	
	When bounding $q_{u,\traj}$, there are two cases here depending on the size of the degree $d$. When the degree $d$ is at most some constant $\Delta:=\Delta(C)$, given explicitly in the proof, we use the cruder bound on $q_{u,\traj}$ from \cref{nonregbound}.
	The more interesting case is $d > \Delta$, where we set
	\[
	\eta
	:=
	\frac{\log\log d}{\log d}
	\in
	(0,1],
	\]
	which, combined with the way we choose $\Delta$ and the bound \eqref{eq:srwprob}, yields the required bound of
	\[
	q_{u,\traj}
	\le
	\exp\bigl(4td^{-\eta}\bigr) \cdot p_{u,\traj}^{\eta/(1+\eta)}
	\leq
	1/2.
	\]
	
	\subsection{Organization}
	We begin in \cref{sec:prelim} with some notation and definitions. In \cref{sec:robustness} we prove our results relating the spectral gap (and edge conductance) to the Lipschitz constant. These results are then applied in \cref{sec:cover} where we prove our upper bound on the cover time of the bias random walk. In \cref{Boost} we then prove our upper bounds on the ability of the bias walk to `boost' probabilities of events, and apply this to prove our lower bound on the cover time of the bias random walk. We conclude with some open problems in \cref{sec:conc}. 
	
	\section{Preliminaries}\label{sec:prelim}
	
	\paragraph{General Notation.}
	We take $\mathbb{N}:=\{0,1,\dots\}$ to denote the natural numbers including $0$, and $\mathbb{R}_+$ to be the positive reals. Throughout we take $\log x $ to be base $e$, i.e.~we take $\log x :=\ln x$ to be the natural logarithm. 
	
	We follow standard graph notation and use $x\sim y$ for the edge relation in an undirected graph. Given a graph $G=(V,E)$ and a subset $S\subseteq V$, we let $\Gamma(S):=\{u  : \{u,s\} \in E \text{ for some }s \in S\}$ be the neighbourhood of $S \subseteq V$ in $G$.   In the case of a singleton set $S=\{v\}$ we denote  $\Gamma(S)$ by $\Gamma(v)$  and call $d(v):=|\Gamma(v)|$ the degree of $v$.
	For two sets $A,B\subseteq V$, we define
	\[E(A,B):=\{\{a,b\} \in E(G) : a\in A, b\in B \}\]
	as the set of edges crossing between $A$ and $B$ and define $e(A,B):=|E(A,B)|$. Throughout,  all graphs are undirected, connected and have no multiple edges. We use $n=|V|$ to denote the number of vertices. By $\dist(u,v)$ we denote the standard graph distance between two vertices $u$ and $v$, and let $D:= \max_{u,v\in V}\dist(u,v)$ denote the diameter of $G$. We also extend this notation to any two sets $A,B \subseteq V$, where we let $\dist(A,B):=\min_{a \in A, b \in B} \dist(a,b)$. For any non-empty subset $U \subseteq V$ and integer $k \geq 0$ we define $\mathcal{B}_{k}(U) := \{ v  \in V \colon \dist(v,U) \leq k \}$.
	We use $\dmax$ and $\dmin$  to denote the maximum and minimum degrees of a graph respectively.

	The \emph{vertex expansion} $\Psi_G$ of a graph $G$ is defined as
	\begin{align}
		\Psi_G
		\cq
		\min_{S \subseteq V : 0 < \abs S / \abs V \le 1/2}
		\frac{\abs{ \Gamma(S) \setminus S }}{\abs S}
		\in
		(0,2]. \label{eq:vertex_exp_def}
	\end{align}
	A sequence of regular graphs $(G_i = (V_i,E_i))_{i \geq 1}$ is called an \emph{expander} if
	\begin{align}\label{eq:expander_def}
		\text{$(i)$ there is a constant $\psi > 0$ such that $\Psi(G_i) \ge \psi$ for all $i \ge 1$}
		\Quad{and}
		\text{$(ii)$ $\lim_{i \to \infty} |V_i|=\infty.$}
	\end{align} 
	As is standard in the area, for brevity we will frequently speak of an expander graph (or just expander) in the singular sense; this should be interpreted as a sequence of graphs.

	\paragraph{Markov Chains.}
	Given a Markov chain $P$ with transition probabilities $P(x,y)$, let $P^t(x,y)$  denote the probability the walk started at state $x$ is at $y$ after $t$ steps.
	The trajectory of the Markov chain is denoted by $(X_t)_{t \in \mathbb{N}}$, where each $X_t$ takes some value in the state space, which in our case will always be $V=V(G)$.
	All Markov chains considered in this work are finite and irreducible, which means there is a unique stationary distribution $\pi_{P}$.
	If no specific chain $P$ is given, then we assume that $\pi=\pi_{P}$ where $P$ is the transition matrix of a simple random walk (SRW), i.e.\ $P(x,y) = 1/d(x)$ if $\{x,y\} \in E$ and $0$ otherwise.
	
	Given a reversible (and irreducible) Markov chain $  P$, let $1 = \lambda_1 > \lambda_2 \ge \cdots \ge \lambda_n \ge -1$ denote the eigenvalues of $P$.
	The \textit{spectral gap} is defined as
	\[
	\gamma_{P} := 1-\lambda_2.
	\]
	The (total-variation) \emph{mixing time} of an irreducible, aperiodic Markov chain is defined as
	\[
	\tmix(P) := \max_{u \in V} \min\left\{ t \geq 1  \colon \left\| P^t(u,\cdot) - \pi \right\|_{\operatorname{TV}} \leq 1/4 \right\},
	\]
	where for two probability vectors $p,q$, $\| p - q \|_{\operatorname{TV}} := \frac{1}{2} \sum_{w \in V} \left| p(w) - q(w) \right|$. For Markov chains where $|\lambda_n| \geq \lambda_2$, one often switches to the ``lazy'' version of $P$, $\widetilde{P}:=(P+I)/2$, where $I$ denotes the identity matrix.
	For such a ``lazy'' Markov chain, it is well-known that $\tmix(\widetilde{P})$  is proportional to $1/\gamma_{\widetilde P}$, up to a factor of $\log(\min_{u \in V} 1/\pi(u))$; see, e.g.,~\cite{levin2009markov}.
	
	Next, the \emph{edge conductance} of $P$ is defined as 
	\[
	\Phi_P
	\cq
	\min_{S \subseteq V : 0 < \pi(S) \le 1/2}
	\frac{Q_{P}(S, S^c)}{\pi(S)}
	\Qwhere
	Q_{P}(S, S^c)
	\cq
	\sum_{x \in S, y \notin S}
	\pi(x) P(x,y)
	\Qfor
	S \subseteq V.
	\] 
	Recall the Cheeger inequality \cite[Lemma~3.3]{JS:approx-counting}: for any reversible Markov chain $P$,
	\begin{equation}\label{eq:cheeger}
		\Phi_{P}^2/2 \leq \gamma_P \leq 2\Phi_P.  
	\end{equation} 
	For $d$-regular graphs and $P$ being the SRW, the edge conductance can be rewritten as
	\[
	\Phi_P
	=
	\min_{S \subseteq V : 0 < \pi(S) \leq 1/2} \frac{e(S,S^c)}{d |S|},
	\Quad{where} e(S,S^c):=|E(S,S^c)|
	\]
	is the number of edges crossing between $S$ and $S^c=V \setminus S$.
	For $d$-regular graphs,
	we have
	\begin{equation}
		\label{eq:vertexedgeconduct}
		\Psi_G / d
		\le
		\Phi_P
		\le
		\Psi_G.
	\end{equation}
	Indeed, each vertex outside but adjacent to $S$ connects to at least $1$ and at most $d$ inside $S$:
	\[
	\abs{\Gamma(S) \setminus S}
	\le
	e(S, S^c)
	\le
	d \abs{\Gamma(S) \setminus S}
	\Quad{for all}
	S \subseteq V. 
	\] 
	Combining this with \eqref{eq:cheeger}, we can also relate the vertex expansion and spectral gap. Particularly,
	\begin{align}\label{eq:vertexspectral}
		\Psi_G \ge \Phi_P \ge \tfrac12 \gamma_P.
	\end{align}
	These relations between the three expansion measures also give rise to equivalent definitions of expanders, simply replacing the vertex expansion in \cref{eq:expander_def} by the spectral gap or edge conductance.

	\paragraph{Biased Random Walks.}
	Building on earlier work \cite{ben1987collective}, Azar et al.\ \cite{ABKLPbias}  introduced the $\eps$-biased random walk ($\eps$-BRW) on a graph $G$, where $\eps\in[0,1]$ is a parameter of the model. Each step of the $\eps$-BRW is preceded by an $(\eps, 1 - \eps)$-coin flip. With probability $1 -\eps$ a step of the simple random walk is performed, but with probability $\eps$ the controller gets to select which neighbour to move to. The selection can be probabilistic, but it is time independent. Thus, if $P$ is the transition matrix of the simple random walk, then the transition matrix $  Q^{\eps\text{B}}$ of the $\eps$-biased random walk is given by \begin{equation}\label{bias} Q^{\eps\text{B}} = (1 - \eps)\cdot  P + \eps\cdot  B,\end{equation}
	where $  B$ is an arbitrary (but fixed) stochastic matrix chosen by the controller, with
	support restricted to $E(G)$. The controller of an $\eps$-BRW has full knowledge of $G$.
	
	Azar et al.\ \cite{ABKLPbias} focused exclusively on fixed bias matrices that do \emph{not} depend on time. The definition of the  $\eps$-BRW was relaxed in~\cite{ITCSpaper} where the bias matrix $ {B}$ was allowed to depend on the full history of the walk. 
	The motivation was that to cover a graph (visit every vertex) efficiently, one must include strategies which depend on time. 
	For example consider the $\eps$-$\tbrw$ started in the middle of an $n$-vertex path, then we can bias towards one end of the path, and once this is reached we bias towards the other for a total cover time of $\Theta(n/\eps)$.
	However, it is shown in \cite[Theorem~1]{IkedaKY09} that the cover time of \textit{any} fixed transition matrix supported on a path is $\Theta(n^2)$, so unlike the $\eps$-$\tbrw$, the $\eps$-$\brw$ does not significantly improve over the simple random walk. 
	
	Let $\mathcal{F}_t$ be the history of the random walk up to time $t$, i.e., the $\sigma$-algebra $\mathcal{F}_t= \sigma\left(X_0, \dots,X_t \right)$ generated by all steps of the walk up to and including time $t$. The $\eps$-time-biased walk ($\eps$-TBRW) refers to a time-dependent random walk specified in the same ways as \eqref{bias}, however now the strategy $B$ of the controller is now encoded using the bias matrix $  B_t$, which may depend on $\mathcal{F}_t$.
	Let $\ETBcov vG$ denote the expected time for the $\eps$-TBRW to visit every vertex of $G$, when starting from $v$, minimized over all strategies. Define the \textit{cover time} $\tetb(G) := \max_{v \in V} \ETBcov vG$.
	
	Let us make this more precise. Our interest is in minimizing the cover time of a given graph $G$, which is done by applying the ``optimal'' time-dependent and adaptive strategy.
	As such, we can restrict ourselves to a set $\mathcal{B}:=\mathcal{B}(G)$ of strategies that specify, for any given subset of visited vertices $S \subseteq V$ and position of the random walk $u \in V$, a particular distribution $b(u,S)$ over the neighbours of $u$. These distributions $\{b(u,S)\}_{u\in V}$ correspond to the rows of a bias matrix $B(S)$,  so any strategy $B\in \mathcal{B}$ is given by a set of these matrices $\{B(S)\}_{S\subseteq V}$.
	
	The restriction to such bias matrices allows us to compute the cover time via lifting to a larger state space. In particular, if we let $\tilde{V}$ be the set of all pairs $(u,S)$ where $u \in V$ and $S\subseteq V$, then each  strategy $B\in \mathcal{B}$ corresponds to a fixed Markov chain $P_B$ with state space $\tilde{V}$. Furthermore, there exists a fixed set $\tilde{U}\subseteq \tilde{V}$ such that, for any $B\in \mathcal{B}$, the cover time under $B$ corresponds to the hitting time of the set $\tilde{U}$ by the Markov chain $P_B$ (see \cite[Lemma 7.5]{POTC} and \cite[Lemma 5.6]{ETB} for details). Thus, for a more rigorous definition of $\tetb(G)$, we can take $\ETBcov vG$ to be the minimum over all Markov chains $P_B$, where $B\in \mathcal{B}$, of the expected hitting time of the set $\tilde{U}$ by the chain $P_B$ started from  $(v,\{v\})\in \tilde{V}$.

	\paragraph{Weighted Walks.}
	Let $G=(V,E)$ be a graph and $w : E\rightarrow \mathbb{R}_+$ be a weighting of the edges of $G$. 
	We define the transition matrix $P_w$ on $V$ \textit{induced by} $w$ by
	\begin{equation}\ts
		\label{eq:weighted}
		P_w(x,y)
		=
		\begin{cases}
			w(x,y) / w(x)
			&\text{if}\quad
			x \ne y,
			\\
			0
			&\text{if}\quad
			x = y,
		\end{cases}
	\end{equation}
	where
	\(
	w(x)
	\cq
	\sum_{z: z\sim x}
	w(x,z)
	\)
	for all $x \in V$. We refer to the chain as the \textit{walk induced by $w$}.
	We denote by $\piw(\cdot):=\pi_{P_w}(\cdot)$ the stationary distribution with respect to $P_w$, given by
	\begin{equation}\label{eq:weights}
		\ts
		\piw(x)
		=
		w(x) / W
		\Qforall
		x \in V
		\Qwhere
		W
		\cq
		2
		\sum_{e \in E}
		w(e)
		=
		\sum_{x \in V}
		w(x).
	\end{equation}
	It is well known that the induced walk is reversible, and moreover that any reversible chain on $V$ with permitted transitions given by $E$ can be represented by a collection $w$ of edge weights. Finally, also for weighted walks, we may switch to the ``lazy'' version of $P$ by defining the transition matrix $\tilde{P}_w:=(P_w+I)/2$.
	
	The next definition is central to this work.
	\begin{definition}[Lipschitz Condition]\label{def:lipschitz}
		For a graph $G=(V,E)$, let $w : E \to \mathbb R_+$ be an edge-weighting of $G$. For a real number $\beta\geq 1$, we say that $w$ is \emph{$\beta$-Lipschitz} if
		\begin{equation}
			\label{eq:lipschitz}
			1 / \beta
			\le
			w(x,y) / w(x,z)
			\le
			\beta
			\Qforall
			x,y,z \in V
			\Qwith
			y \sim x \sim z.
		\end{equation}
	\end{definition}
	
	We now make a basic but crucial observation on the ratio of stationary probabilities of nearby vertices.
	
	\begin{lemma}[{cf.~\cite[Proposition~4.8]{ETB}}]\label{lem:pibound}Let $P_w$ be the transition matrix induced by a $\beta$-Lipschitz edge-weighting of any graph $G=(V,E)$. Then, for any $k\geq 0$, 
		\[
		\left(\frac{\dmin}{\dmax\cdot \beta^{2}}\right)^k
		\le
		\frac{\piw(x)}{\piw(y)}
		\le
		\left(\frac{\dmax\cdot \beta^{2}}{\dmin}\right)^k \Qforall x,y \in V \Qwith
		\dist(x,y) \le k.
		\]
	\end{lemma}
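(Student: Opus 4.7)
The plan is to reduce to the case $k=1$ (neighbours) by chaining along a shortest path, and then establish the base case directly from the Lipschitz condition using the explicit formula $\pi_w(x) = w(x)/W$ from~\cref{eq:weights}.

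For the base case, suppose $x \sim y$. Since $\pi_w(x)/\pi_w(y) = w(x)/w(y)$, I would upper-bound the numerator by noting that every term in $w(x) = \sum_{z \sim x} w(x,z)$ satisfies $w(x,z) \le \beta \cdot w(x,y)$ by~\cref{eq:lipschitz} applied to the triple $(x,y,z)$; summing over the $d(x) \le \dmax$ neighbours of $x$ yields $w(x) \le \dmax \beta \cdot w(x,y)$. Symmetrically, for the denominator $w(y) = \sum_{z' \sim y} w(y,z')$, the Lipschitz condition at $y$ gives $w(y,z') \ge w(y,x)/\beta = w(x,y)/\beta$ for each of the $d(y) \ge \dmin$ neighbours $z'$ of $y$, whence $w(y) \ge \dmin \cdot w(x,y)/\beta$. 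Dividing produces $\pi_w(x)/\pi_w(y) \le \dmax \beta^2 / \dmin$, and the matching lower bound follows by swapping the roles of $x$ and $y$.

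For the general case, take any shortest path $x = x_0 \sim x_1 \sim \cdots \sim x_\ell = y$ of length $\ell = \dist(x,y) \le k$, write the telescoping product
\[
\frac{\pi_w(x)}{\pi_w(y)}
=
\prod_{i=0}^{\ell - 1}
\frac{\pi_w(x_i)}{\pi_w(x_{i+1})},
\]
and apply the base case to each factor. Since $\dmax \beta^2 / \dmin \ge 1$, one may freely replace $\ell$ by $k$ in the exponent, giving the claimed upper bound; the lower bound is obtained identically.

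There is essentially no obstacle: the only thing to watch is that the $\beta^2$ (rather than $\beta$) in the statement arises because the Lipschitz hypothesis is invoked \emph{twice per edge} — once to bound $w(x)$ from above in terms of $w(x,y)$ and once to bound $w(y)$ from below in terms of $w(x,y)$ — and that the bound on $w(x)$ (resp.\ $w(y)$) uses $\dmax$ (resp.\ $\dmin$) to account for the sum over all neighbours.
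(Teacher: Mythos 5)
Your proof is correct and follows essentially the same route as the paper: bound $w(x)$ above by $\dmax\beta\, w(x,y)$ and $w(y)$ below by $\dmin\, w(x,y)/\beta$ using the Lipschitz condition at each endpoint, then iterate along a path, which is exactly the paper's (more tersely written) argument. The only difference is presentational — you spell out the telescoping product and the replacement of $\ell$ by $k$, which the paper leaves implicit.
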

	\begin{Proof}
		Suppose that $x,y \in V$ with $\{x,y\} \in E$. Then,  
		\[
		\frac{\piw(x)}{\piw(y)}
		=
		\frac{\sum_{z : z \sim x} w(x,z)}{\sum_{z : z \sim y} w(y,z)}
		\le
		\frac{ \dmax  \cdot \beta w(x,y) }{ \dmin\cdot  w(y,x) / \beta }
		=
		\frac{\dmax}{\dmin}\cdot \beta^2.
		\]
		Iterating this, if $\dist(x,y) \le k$, then $\piw(x)/\piw(y) \le (\beta^2 \dmax/\dmin)^k $.
	\end{Proof}

	\section{Robustness of the Spectral Gap}\label{sec:robustness}
	
	In this section we explore the effect of the Lipschitz constant on the spectral gap and the edge conductance. 
	\subsection{Lower Bound on the Spectral Gap}
	\newcommand{\divline}{\bigskip\hrule\hrule\hrule\bigskip}
	
	The aim of this section is to prove the following theorem.
	It will be established via a sequence of lemmas.
	
	\begin{theorem}
		\label{res:robust}
		Let $G = (V,E)$ be a $d$-regular, connected graph with vertex expansion $\Psi_{G} \geq \psi$.
		Set $K \cq \ceil{2 / \log(1 + \psi)}$ and $\sigma \cq e^{1/(2K)}$.
		Then,
		for any $\sigma$-Lipschitz collection $w : E \to \mathbb R_+$ of weights, the weighted random walk $P_w$, given by \eqref{eq:weighted}, satisfies
		\[
		\Phi_{P_w^{2K}}
		\ge
		\tfrac1{4000}\cdot 
		d^{-2K},
		\quad\text{and hence}\quad
		\gamma_{P_w}
		\ge
		10^{-8} \cdot 
		d^{-4K}.
		\]
		
	\end{theorem}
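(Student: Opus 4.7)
The plan is to first lower-bound the edge conductance of the amplified chain $P_w^{2K}$, and then convert this to a spectral-gap bound on $P_w$ via Cheeger's inequality and a standard spectral-lifting argument. The choice $K = \lceil 2/\log(1+\psi)\rceil$ is tailored so that $(1+\psi)^K \geq e^2$. Iterating the vertex-expansion inequality \cref{eq:vertex_exp_def} yields $|\mathcal{B}_K(A)| \geq e^2|A|$ for any $A$ whose intermediate $k$-balls ($k \leq K$) all have size at most $n/2$. Thus the amplified chain enjoys a vertex-expansion factor of at least $e^2$ per ``super-step'', at least until it covers half the graph.

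Fix any $S \subseteq V$ with $\pi_w(S) \leq 1/2$. Since $G$ is $d$-regular and $w$ is $\sigma$-Lipschitz with $\sigma = e^{1/(2K)}$, \cref{lem:pibound} gives $\pi_w(x)/\pi_w(y) \leq \sigma^{2k} = e^{k/K}$ whenever $\dist(x,y) \leq k$; in particular a single graph step changes $\pi_w$ by a factor of at most $e^{1/K}$, close to $1$. I would then partition $S$ into level-sets $V_i := \{x \in S : \pi_w(x) \in [c^{i-1}, c^i)\cdot m\}$ for a suitable base $c$ (for instance $c = e$) and reference mass $m$. The Lipschitz control forces any two $G$-adjacent vertices of $S$ to lie in the same or neighbouring buckets, and more generally mass cannot migrate across too many buckets in a $2K$-step transition. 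This rigidity is what the cascade exploits.

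The core argument is a cascade over the buckets. For each $i$, apply the amplified expansion to $S \cap V_i$: either (a) a constant fraction of the mass of $\mathcal{B}_{2K}(S \cap V_i)$ lies outside $S$, which contributes usefully to the ergodic flow $Q_{P_w^{2K}}(S, S^c)$; or (b) the amplified excess mass lies within $S$ in buckets close to $V_i$ (by Lipschitz bucketing), which forces the $\pi_w$-mass of the next bucket up to be a constant factor larger than $\pi_w(S \cap V_i)$. Since $\pi_w(S) \leq 1/2$ is bounded, case (b) cannot cascade indefinitely and some bucket $V_j$ must satisfy (a) with substantial mass. In case (a), a single $2K$-step transition of $P_w$ along any fixed path has probability at least $(\sigma d)^{-2K} = e^{-1}d^{-2K}$ (since $P_w(x,y) \geq 1/(\sigma d)$ by Lipschitz), converting the vertex expansion into an ergodic-flow bound $Q_{P_w^{2K}}(S \cap V_j, S^c) \gtrsim d^{-2K}\pi_w(S \cap V_j)$. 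Aggregating such contributions along the cascade (avoiding double-counting of boundary edges) yields $\Phi_{P_w^{2K}} \geq \tfrac{1}{4000} d^{-2K}$. Cheeger's inequality \eqref{eq:cheeger} then gives $\gamma_{P_w^{2K}} \geq \tfrac{1}{2}\Phi_{P_w^{2K}}^2$, and since the non-trivial eigenvalues of $P_w^{2K}$ are $(2K)$-th powers of those of $P_w$, Bernoulli's inequality converts this to $\gamma_{P_w} \geq \gamma_{P_w^{2K}}/(2K)$, which when absorbed into the constants yields the stated $\gamma_{P_w} \geq 10^{-8} d^{-4K}$.

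I expect the main obstacle to be the cascade itself: calibrating the bucket base $c$ so that the Lipschitz rigidity, the $e^2$ expansion amplification, and the stationary-mass budget $\pi_w(S) \leq 1/2$ all interact with clean constants; then organising the aggregation across case-(a) buckets so that a constant share of the total ergodic flow survives without double-counting. Making the case-(b) step quantitative enough to terminate in a controlled number of iterations, and compatible with Lipschitz bucketing over $2K$ steps rather than single steps, is where the real bookkeeping lies.
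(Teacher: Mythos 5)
Your proposal is correct and follows essentially the same route as the paper's proof: amplify to $P_w^{2K}$ so that balls grow by a factor $\alpha=e^2$, bucket vertices by stationary mass in powers of $e$ so that the Lipschitz condition confines $K$-step moves to adjacent buckets, run the grow-or-escape cascade (which the paper formalizes via the representative indices of \cref{def:good_indices} and the lemmas following it), convert the vertices escaping $S$ into ergodic flow via the minimal entry $e^{-1}d^{-2K}$ of $P_w^{2K}$, and finish with Cheeger's inequality together with the fact that the eigenvalues of $P_w^{2K}$ are $2K$-th powers of those of $P_w$. The bookkeeping you defer is exactly what the paper carries out, including one detail you elide: the cascade is run for sets with $\abs{S}\le \tfrac12 n$ in cardinality (which $\piw(S)\le\tfrac12$ does not imply under non-uniform weights), and general $S$ is recovered at the end by symmetrizing via $Q_{P_w^{2K}}(S,S^c)=Q_{P_w^{2K}}(S^c,S)$.
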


	Throughout the remainder of this section, we fix a $\sigma$-Lipschitz weighting $w : E \to \mathbb R_+$, with $\sigma = e^{1/(2K)}$ and $K = \ceil{2/\log(1 + \psi)}$. Recall that $P_w^{2K}$ is the $(2K)$-step transition matrix $P_w^{2K}=(P_{w})^{2K}$ corresponding to the Markov chain where $2K$ consecutive transitions of $P_w$ are combined into one step. Observe that the stationary distribution of $P_w^{2K}$ is the same as $P_w$, and therefore, by \eqref{eq:weights}, $ \pi_{P_w^{2K}}(x)=\pi_{P_w}(x) = w(x) / W$, which we refer to as $\piw(\cdot)$ for brevity.

	For any vertex $u \in V$ and radius $k \in \mathbb{N}$, recall that
	$\mcb_k(u) = \bigl\{ v \in V \colon \dist(u,v) \le k \bigr\}$
	denotes the ball of radius $k$ around~$u$.
	We start with a graph of vertex expansion at least $\psi$, implying that balls grow at rate $1+\psi$:
	\[
	\abs{ \mcb_1(S) }
	=
	\abs{ S \cup ( \Gamma(S) \setminus S ) }
	=
	\abs S +
	\abs{ \Gamma(S) \setminus S }
	\ge
	(1 + \psi)
	\abs S
	\Quad{whenever}
	\abs S \le \tfrac12 n.
	\] 
	The idea is to boost the vertex expansion such that balls grow at a rate at least $\vex \cq e^2$ by taking the above inequality and iterating it $k$-times, which yields into a lower bound on the number of vertices with distance at most $k$. This will be later used to analyze the edge conductance~$\Phi_{P_w^{2K}}$. 
	
	\begin{lemma}
		\label{res:robust:ball-expansion}
		For all $S \subseteq V$ and any integer $k \geq 0$, we have
		\[
		\abs{ \mcb_k(S) }
		\ge
		\min\bigl\{ (1 + \psi)^k \abs S, \: \tfrac12 n \bigr\}. \]
		Consequently, $\abs{ \mcb_k(S) }\ge \min\bigl\{ \vex \abs S, \: \tfrac12 n \bigr\}$ for any $k\geq K$.
	\end{lemma}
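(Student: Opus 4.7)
The plan is to prove the first bound by induction on $k$, using the vertex-expansion hypothesis $\Psi_G \ge \psi$ (equivalently, $\abs{\mcb_1(T)} \ge (1+\psi)\abs T$ whenever $\abs T \le n/2$), and then deduce the second bound by checking that $(1+\psi)^K \ge e^2 = \vex$ from the definition $K = \lceil 2/\log(1+\psi)\rceil$.

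The base case $k=0$ is immediate since $\mcb_0(S)=S$. For the inductive step, assume $\abs{\mcb_{k-1}(S)} \ge \min\{(1+\psi)^{k-1}\abs S,\tfrac12 n\}$ and split on the size of $\mcb_{k-1}(S)$. If $\abs{\mcb_{k-1}(S)} \ge \tfrac12 n$, then monotonicity $\mcb_k(S)\supseteq \mcb_{k-1}(S)$ already gives the bound. Otherwise $\abs{\mcb_{k-1}(S)} < \tfrac12 n$, which (via the contrapositive of the inductive hypothesis) forces $(1+\psi)^{k-1}\abs S \le \abs{\mcb_{k-1}(S)}$, and the vertex-expansion inequality applied to the set $\mcb_{k-1}(S)$ yields
\[
\abs{\mcb_k(S)}
=
\abs{\mcb_{k-1}(S)} + \abs{\Gamma(\mcb_{k-1}(S))\setminus \mcb_{k-1}(S)}
\ge
(1+\psi)\abs{\mcb_{k-1}(S)}
\ge
(1+\psi)^k \abs S,
\]
which dominates $\min\{(1+\psi)^k\abs S,\tfrac12 n\}$.

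For the ``consequently'' part, the choice $K=\lceil 2/\log(1+\psi)\rceil$ gives $K\log(1+\psi)\ge 2$, hence $(1+\psi)^K \ge e^2 = \vex$. Thus, for $k \ge K$, the first bound yields $\abs{\mcb_k(S)} \ge \min\{(1+\psi)^k\abs S, \tfrac12 n\} \ge \min\{\vex\abs S, \tfrac12 n\}$.

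There is no real obstacle here: this is a standard ball-growth amplification, and the only delicate point is correctly handling the $\min$ in the induction (to avoid spuriously losing a factor of $1+\psi$ when crossing the $n/2$ threshold), which the case split above resolves cleanly.
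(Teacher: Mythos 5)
Your proof is correct and follows essentially the same route as the paper: iterate the vertex-expansion inequality on the balls $\mcb_{i}(S)$, using monotonicity of $i \mapsto \abs{\mcb_i(S)}$ to handle the case where the ball exceeds $\tfrac12 n$, and then note $(1+\psi)^K \ge e^2 = \vex$ from the definition of $K$. Your explicit induction with the case split is just a more detailed write-up of the paper's ``iterating the above gives the claim''.
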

	\begin{Proof}
		For any integer $i \in [1, k]$, the $\psi$-vertex expansion of $G$ implies that
		\[
		\abs{ \mcb_i(S) }
		=
		\abs{\mcb_{i-1}(S)\cup (\Gamma( \mcb_{i-1}(S)) \setminus \mcb_{i-1}(S))  }
		\ge
		(1 + \psi) \abs{ \mcb_{i-1}(S) }
		\Quad{whenever}
		\abs{\mcb_{i-1}(S)} \le \tfrac12 n.
		\]
		Note that $i \mapsto \abs{ \mcb_i(S) } : \mbn \to [0, n]$ is non-decreasing.
		Iterating the above gives the claim.
	\end{Proof}
	
	While the definition of the vertex expansion $\Psi_G$ (see~\cref{eq:vertex_exp_def}) only concerns estimates on the neighbourhood of sets of size below $\tfrac 12 n$, the next lemma provides such estimates for \emph{any} size.

	\begin{lemma}
		\label{res:robust:vertex-expansion}
		Let $G = (V, E)$ be any graph satisfying $\Psi_G \ge \psi > 0$.
		Then, for any $S \subseteq V$,
		\[
		\abs{ \Gamma(S) \setminus S }
		\ge
		\tfrac13 \psi \cdot
		\min\{ \abs{S}, \: \abs{S^c} \}.
		\]
	\end{lemma}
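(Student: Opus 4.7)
The plan is to split on whether $\abs{S}\le n/2$ or not. When $\abs{S}\le n/2$, the bound is immediate from the definition of $\Psi_G$ in \cref{eq:vertex_exp_def}: here $\min\{\abs{S},\abs{S^c}\}=\abs{S}$, and vertex expansion gives $\abs{\Gamma(S)\setminus S}\ge \psi\abs{S}$, which is even stronger than the claimed $(\psi/3)\abs S$. So all the work is in the case $\abs{S}>n/2$, i.e.\ $\abs{T}<n/2$ where $T:=S^c$.

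The key observation in this case is that $\Gamma(S)\setminus S$ is precisely the set of vertices in $T$ having at least one neighbour in $S$. Its complement inside $T$, namely $B:=T\setminus(\Gamma(S)\setminus S)$, consists of those vertices of $T$ whose entire neighbourhood lies in $T$. In particular, $\Gamma(B)\subseteq T$, and hence
\[
\Gamma(B)\setminus B \;\subseteq\; T\setminus B \;=\; \Gamma(S)\setminus S.
\]
Since $\abs B\le\abs T<n/2$, the vertex-expansion hypothesis applies directly to $B$ (when $B\neq\emptyset$), giving $\abs{\Gamma(S)\setminus S}\ge\abs{\Gamma(B)\setminus B}\ge \psi\abs B$. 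The edge case $B=\emptyset$ is trivial: then $\abs{\Gamma(S)\setminus S}=\abs T$ and $\psi/3\le 1$.

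Using the identity $\abs T=\abs B+\abs{\Gamma(S)\setminus S}$, the previous inequality rearranges to
\[
\abs{\Gamma(S)\setminus S}\;\ge\;\frac{\psi}{1+\psi}\,\abs T\;\ge\;\frac{\psi}{3}\,\abs T,
\]
where the second step uses $\psi\le 2$, from the range $(0,2]$ recorded in \cref{eq:vertex_exp_def}. The only mildly non-routine step is noticing that once $\abs S>n/2$ one cannot apply vertex expansion to $S$ itself; the right move is to apply it to the \emph{interior} $B$ of $S^c$, whose outer boundary is automatically contained in the target set $\Gamma(S)\setminus S$. The remainder is arithmetic, so I do not expect any serious obstacle.
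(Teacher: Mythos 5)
Your proposal is correct and follows essentially the same route as the paper: your set $B$ is exactly the paper's $R := V \setminus (\Gamma(S) \cup S)$, the key inclusion $\Gamma(B)\setminus B \subseteq \Gamma(S)\setminus S$ is the paper's \cref{eq:partition}, and the rearrangement to $\tfrac{\psi}{1+\psi}\abs{S^c}$ followed by $\psi \le 2$ matches the paper's conclusion. The only (harmless) difference is that you explicitly handle the edge case $B = \varnothing$, which the paper leaves implicit.
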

	
	\begin{Proof}
		We partition the vertex set $V$ into three disjoint parts: 
		\[
		S,
		\quad
		T \cq \Gamma(S) \setminus S
		\Qand
		R \cq V \setminus \bigl( \Gamma(S) \cup S \bigr).
		\]
		Then,
		\begin{align}
			\Gamma(S) \setminus S
			=
			T
			\supseteq
			\Gamma(R) \setminus R. \label{eq:partition}
		\end{align}
		
		Now, if $\abs S \le \tfrac12 n$, then $\abs{S^c} \ge \abs S$ and $\Psi_G \ge \psi$ together imply that
		\[
		\abs T
		=
		\abs{ \Gamma(S) \setminus S }
		\ge
		\psi \abs S
		\ge
		\tfrac13 \psi
		\min\{ \abs S, \abs{S^c} \}.
		\]
		It remains to consider the case $\abs S \ge \tfrac12 n$, for which $\abs{S^c} \le \abs S$ and $\abs R \le \tfrac12 n$.
		Using \cref{eq:partition},
		\[
		\abs T
		\ge
		\abs{ \Gamma(R) \setminus R }
		\ge
		\psi \abs R
		=
		\psi (\abs V - \abs S - \abs T).
		\]
		Rearranging,
		\[
		\abs T
		\ge
		\tfrac{\psi}{1+\psi}
		(\abs V - \abs S)
		=
		\tfrac{\psi}{1+\psi}
		\abs{S^c}.
		\]
		Finally, $\psi \le \Psi_G \le 2$, by considering an arbitrary set of size $\floor{\tfrac12 n}$, and so $1/(1+\psi) \ge \tfrac13$.
	\end{Proof}
	
	Ultimately, we wish to control the edge expansion of $P_w^{2K}$.
	We will do this by counting the number of neighbours at distance at most $K$ from a set $S$, i.e., by lower bounding $\abs{ \mcb_K(S) \setminus S }$.
	First, we partition $V$ into blocks whose vertices have exponentially decreasing $\piw$-values.
	
	\begin{definition}\label{def:Vipartition}
		For $i \in \mbpn$  
		and $S \subseteq V$, write
		\[
		V_i
		\cq
		\{ v \in V \mid \piw(v) \in (1/e^i, 1/e^{i-1}] \}
		\Qand
		S_i
		\cq
		S \cap V_i;
		\Quad{then,}
		S
		=
		\cup_{i \in \mbn}
		S_i.
		\]
	\end{definition}
	
	The key is that if $x \in V_i$ and $y \in V$ satisfies $\dist(x,y) \le K$, then $y \in V_{i-1} \cup V_i \cup V_{i+1}$.
	This follows directly from \cref{lem:pibound} and the definitions of $\sigma$ and $K$ in \cref{res:robust}.
	\begin{corollary}[of \cref{lem:pibound}]
		\label{lem:robust:lip-neighbours}
		Under the hypotheses of \cref{res:robust}, the following Lipschitz condition holds for $\piw$:
		\[
		1 / e
		= 
		1 / \sigma^{2K}
		\le
		\piw(x) / \piw(y)
		\le
		\sigma^{2K}
		=
		e
		\Qforall
		x,y \in V
		\Qwith
		\dist(x,y) \le K.
		\]
		In particular, this implies that, for all $i \in \mbn$, defining $V_{0} \cq \varnothing$,
		\[
		\mcb_K(V_i)
		\subseteq
		V_{i-1} \cup V_i \cup V_{i+1}.
		\]
	\end{corollary}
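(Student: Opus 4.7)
The proof proposal is to apply \cref{lem:pibound} directly and then unpack the definitions. Since $G$ is $d$-regular, we have $\dmax = \dmin = d$, so the ratio $\dmax/\dmin$ disappears and \cref{lem:pibound} simplifies to
\[
\beta^{-2k} \le \piw(x)/\piw(y) \le \beta^{2k}
\Qforall x, y \in V \Qwith \dist(x,y) \le k.
\]
The first step is to substitute $\beta = \sigma$ and $k = K$ into this inequality. By definition $\sigma = e^{1/(2K)}$, so $\sigma^{2K} = e$ (and $\sigma^{-2K} = 1/e$), which gives exactly the displayed bound
\[
1/e \le \piw(x) / \piw(y) \le e
\Qforall x,y \in V \Qwith \dist(x,y) \le K.
\]

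For the second claim, I would take an arbitrary $y \in \mcb_K(V_i)$; by definition this produces some $x \in V_i$ with $\dist(x,y) \le K$. Applying the first part, $\piw(y) \in \bigl[\piw(x)/e,\, e \cdot \piw(x)\bigr]$. Since $x \in V_i$ means $\piw(x) \in (1/e^i,\, 1/e^{i-1}]$, this forces $\piw(y) \in (1/e^{i+1},\, 1/e^{i-2}]$. By the definition of the blocks $V_j$, this range is exactly covered by $V_{i-1} \cup V_i \cup V_{i+1}$ (adopting the convention $V_0 = \varnothing$ when $i = 1$, which is already in the statement), so $y$ lies in one of these three blocks, yielding the desired inclusion.

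There is no real obstacle in the proof: the corollary is essentially a bookkeeping exercise in which the parameters $\sigma$ and $K$ were designed so that $\sigma^{2K} = e$ exactly, ensuring each ball of radius $K$ straddles at most one block boundary of $(V_i)_{i \in \mbpn}$. The only care needed is checking the endpoints of the half-open intervals defining $V_i$ to confirm that a multiplicative perturbation by a factor of $e$ can only cross into the two adjacent blocks and no further, which is immediate from the choice of spacing $e^{-i}$ in \cref{def:Vipartition}.
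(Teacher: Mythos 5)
Your proof is correct and matches the paper's intended argument: the paper itself only remarks that the corollary ``follows directly from \cref{lem:pibound} and the definitions of $\sigma$ and $K$'', and your write-up is exactly that specialisation ($\dmax=\dmin=d$, $\beta=\sigma$, $k=K$, so $\sigma^{2K}=e$) followed by the interval bookkeeping for the blocks $V_i$, including the correct handling of the $i=1$ boundary via $V_0=\varnothing$ (where one implicitly also uses $\piw(y)\le 1$).
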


	We now find a ``representative'' collection $\mathcal I \subseteq \mbn$ of indices for
	\(
	S
	=
	\cup_{i \in \mbn}
	S_i
	\)
	and use the inclusion
	\[
	\mcb_K( \good ) \setminus S
	\subseteq
	\mcb_K(S) \setminus S
	\Qwhere
	\good
	\cq
	\cup_{i \in \mathcal I}
	S_i
	\subseteq
	\cup_{i \in \mbn}
	S_i
	=
	S.
	\]
	An illustration of this construction is given in \cref{fig:illustration}.
	
	\begin{definition}[``representative'' Indices]\label{def:good_indices}
		Fix an arbitrary set $S \subseteq V$ with $\abs S \le \tfrac12 n$.
		Recall that $\vex = e^2$.
		We define a finite sequence of integers $1 \le a_1 \le b_1 < a_2 \le b_2 < ...$ recursively.
		\begin{itemize}
			\item 
			$a_1$ is the smallest integer such that $S_{a_1} \ne \varnothing$.
			
			\item 
			Given $(a_1, b_1, a_2, b_2, \ldots, b_{\ell-1}, a_\ell)$,
			let $b_\ell$ be the smallest integer satisfying $b_\ell \ge a_\ell$ and
			\[
			\abs{ S_{b_\ell+1} }
			\le
			\tfrac12 \vex \abs{ S_{a_\ell} \cup S_{a_\ell+1} \cups S_{b_\ell-1} \cup S_{b_\ell} }. 
			\]
			
			\item 
			Given $(a_1, b_1, a_2, b_2, \ldots, a_\ell, b_\ell)$,
			let $a_{\ell+1}$ be the smallest integer satisfying $a_{\ell+1} > b_\ell$ and
			\[
			\abs{ S_{a_{\ell+1}} }
			>
			\abs{ S_{a_{\ell+1}-1} }.
			\]
		\end{itemize}
		This sequence terminates at some $b_L$: i.e., $(a_1, b_1, \ldots, a_L, b_L)$ are defined, however there is no $a_{L+1}$ satisfying the required conditions;
		for convenience, we set $a_{L+1} \cq \infty$.
		Set
		\[\ts
		\good_\ell
		\cq
		S_{a_\ell} \cup S_{a_\ell+1} \cup \cdots \cup S_{b_{\ell}}
		\Qfor
		\ell \in [L]
		\Qand
		\good
		\cq
		\cup_{\ell \in [L]}
		\good_\ell.
		\]
	\end{definition}
	
	\begin{figure}
		\begin{center}
	\scalebox{0.7}{\begin{tikzpicture}
			[scale=0.9,rec/.style={rounded corners=6pt},
			rrec/.style={rounded corners=6pt,thick,fill=red!20!,draw=red},
			grec/.style={rounded corners=6pt,thick,fill=green!30!,draw=green},
			nrec/.style={rounded corners=6pt,opacity=0.5,thick,dashed,fill=white,draw=black},
			brec/.style={rounded corners=6pt,opacity=0.5,thick,fill=blue!20!,draw=blue}
			]
			
			\draw[rec] (0,5) rectangle (18,6);
			\node () at (20,5.5) {$\piw(\cdot) \in (e^{-1},1]$};
			\node () at (8,6.25) {$V_1$};
			\node[anchor=west] () at (0,6.35) {$\textcolor{red}{|S_1|=0}$};
			\node () at (-1,5.5) {$1$};
			
			\draw[rec] (0,3) rectangle (18,4);
			\draw[grec] (0,3) rectangle (0.5,4);
			\node () at (20,3.5) {$\piw(\cdot) \in (e^{-2},e^{-1}]$};
			\node () at (8,4.25) {$V_2$};
			\node[anchor=west] () at (0,4.35) {$\textcolor{green}{|S_2|=2}$};
			\node () at (-1,3.5) {$2$};
			
			\draw[rec] (0,1) rectangle (18,2);
			\draw[grec] (0,1) rectangle (3,2);
			\node () at (20,1.5) {$\piw(\cdot) \in (e^{-3},e^{-2}]$};
			\node () at (8,2.25) {$V_3$};
			\node[anchor=west] () at (0,2.35) {$\textcolor{green}{|S_3|=12 \geq \frac{1}{2} \alpha \cdot 2}$};
			\node () at (-1,1.5) {$3$};
			
			\draw[rec] (0,-1) rectangle (18,0);
			\draw[grec] (0,-1) rectangle (13,0);
			\node () at (20,-0.5) {$\piw(\cdot) \in (e^{-4},e^{-3}]$};
			\node () at (8,0.25) {$V_3$};
			\node[anchor=west] () at (0,0.35) {$\textcolor{green}{|S_4|=52 \geq \frac{1}{2} \alpha \cdot (2+12)}$};
			\node () at (-1,-0.5) {$4$};

			\draw[rec] (0,-3) rectangle (18,-2);
			\draw[rrec] (0,-3) rectangle (17.5,-2);
			\node () at (20,-2.5) {$\piw(\cdot) \in (e^{-5},e^{-4}]$};
			\node () at (8,-1.75) {$V_4$};
			\node[anchor=west] () at (0,-1.65) {$\textcolor{red}{|S_5|=70 < \frac{1}{2} \alpha \cdot (2+12+52)}$};
			\node () at (-1,-2.5) {$5$};

			\draw[rec] (0,-5) rectangle (18,-4);
			\draw[rrec] (0,-5) rectangle (2,-4);
			\node () at (20,-4.5) {$\piw(\cdot) \in (e^{-6},e^{-5}]$};
			\node () at (8,-3.75) {$V_6$};
			\node[anchor=west] () at (0,-3.65) {$\textcolor{red}{|S_6|=8 < 70}$};
			\node () at (-1,-4.5) {$6$};
			
			\draw[rec] (0,-7) rectangle (18,-6);
			\draw[rrec] (0,-7) rectangle (1.25,-6);
			\node () at (20,-6.5) {$\piw(\cdot) \in (e^{-7},e^{-6}]$};
			\node () at (8,-5.75) {$V_7$};
			\node[anchor=west] () at (0,-5.65) {$\textcolor{red}{|S_7|=5 < 8}$};
			\node () at (-1,-6.5) {$7$};
			
			\draw[rec] (0,-9) rectangle (18,-8);
			\draw[grec] (0,-9) rectangle (1.5,-8);
			\node () at (20,-8.5) {$\piw(\cdot) \in (e^{-8},e^{-7}]$};
			\node () at (8,-7.75) {$V_8$};
			\node[anchor=west] () at (0,-7.65) {$\textcolor{green}{|S_8|=6 > 5}$};
			\node () at (-1,-8.5) {$8$};
			
			\draw[rec] (0,-11) rectangle (18,-10);
			\draw[grec] (0,-11) rectangle (8,-10);
			\node () at (20,-10.5) {$\piw(\cdot) \in (e^{-9},e^{-8}]$};
			\node () at (8,-9.75) {$V_9$};
			\node[anchor=west] () at (0,-9.65) {$\textcolor{green}{|S_9|=32 \geq \frac{1}{2}\alpha \cdot 6}$};
			\node () at (-1,-10.5) {$9$};
			
			\draw[rec] (0,-13) rectangle (18,-12);
			\draw[rrec] (0,-13) rectangle (17,-12);
			\node () at (20,-12.5) {$\piw(\cdot) \in (e^{-10},e^{-9}]$};
			\node () at (8,-11.75) {$V_{10}$};
			\node[anchor=west] () at (0,-11.65) {$\textcolor{red}{|S_{10}|=68 < \frac{1}{2}\alpha \cdot (6 + 32)}$};
			\node () at (-1,-12.5) {$10$};
			
			\draw[rec] (0,-15) rectangle (18,-14);
			\node () at (20,-14.5) {$\piw(\cdot) \in (e^{-11},e^{-10}]$};
			\node () at (8,-13.75) {$V_{11}$};
			\node[anchor=west] () at (0,-13.65) {$\textcolor{red}{|S_{11}|=0}$};
			\node () at (-1,-14.5) {$11$};

			\draw[brec] (-1.5,4.25) rectangle (-0.5,-1.25);
			\node[] at (-3,1.5) {\Large{\textcolor{blue}{$\mathcal{S}_1$}}};
			\node[] at (-2,3.5) {\Large{\textcolor{blue}{$a_1$}}};
			\node[] at (-2,-0.5) {\Large{\textcolor{blue}{$b_1$}}};

			\draw[nrec] (-1.5,-1.75) rectangle (-0.5,-7.25);
			\node[] at (-3,-4.5) {\Large{\textcolor{black}{$\mathcal{N}_1$}}};
			
			\draw[brec] (-1.5,-7.75) rectangle (-0.5,-11.25);
			\node[] at (-3,-9.5) {\Large{\textcolor{blue}{$\mathcal{S}_2$}}};
			\node[] at (-2,-8.5) {\Large{\textcolor{blue}{$a_2$}}};
			\node[] at (-2,-10.5) {\Large{\textcolor{blue}{$b_2$}}};
			
			\draw[nrec] (-1.5,-11.75) rectangle (-0.5,-15.25);
			\node[] at (-3,-13.5) {\Large{\textcolor{black}{$\mathcal{N}_2$}}};
			
	\end{tikzpicture}}
		\end{center}
		\caption{Illustration of the construction of $R$ for a given $S \subseteq V$, where $\alpha=e^2$.
			A new sequence starts if the size of the $S_i$ is at least as large as the size of the previous set, and a sequence ends as soon as $S_i$ is not larger than $\frac{1}{2} \alpha$ times of the size of all previous sets in that sequence.
		}
		\label{fig:illustration}
	\end{figure}
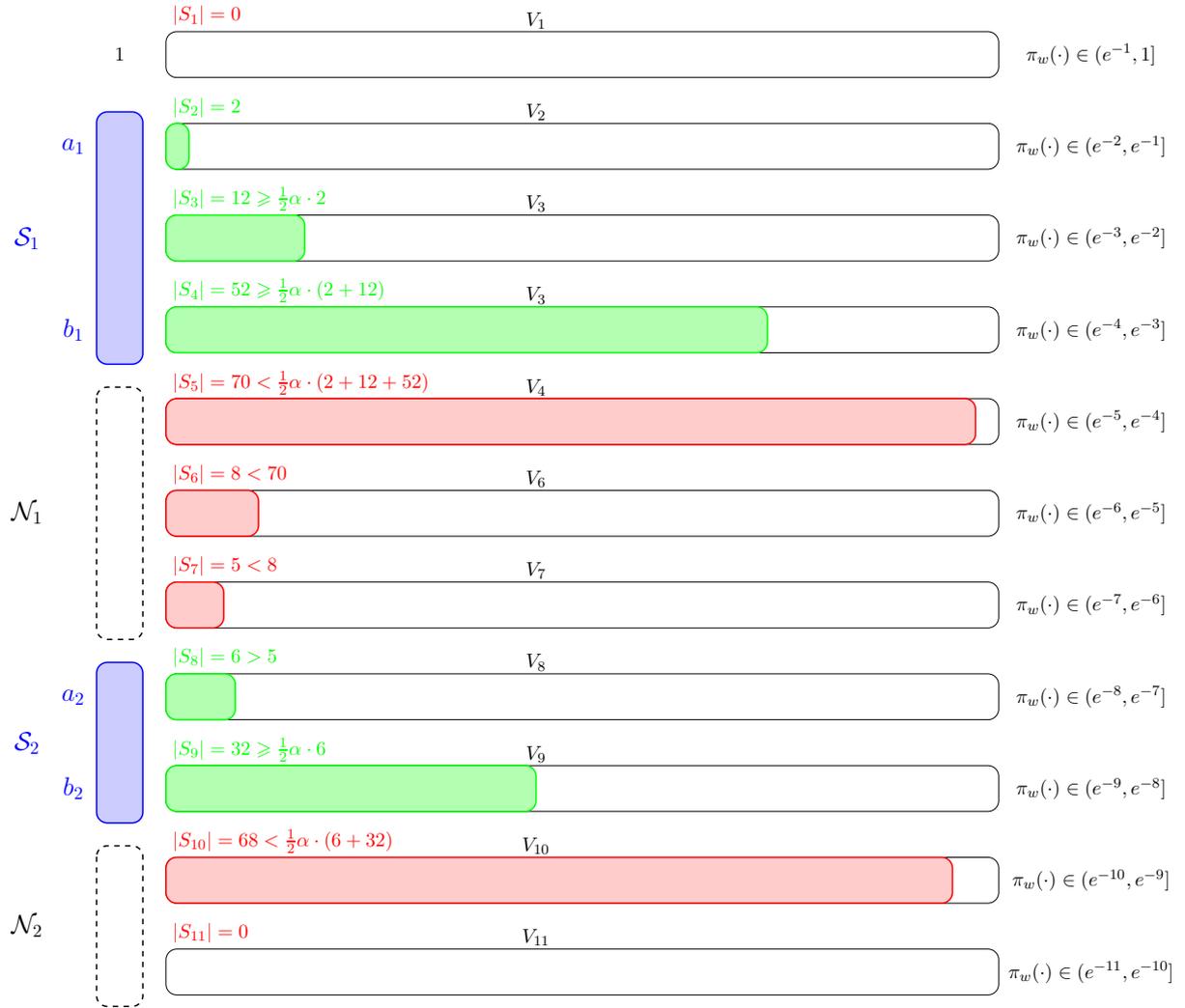

	The sets are growing inside the ``intervals'' $\good_\ell$:
	for all $\ell \in [L]$,
	we have
	\(
	\abs{S_{a_\ell}}
	>
	\abs{S_{a_\ell-1}}
	\)
	and
	\[
	\abs{S_{a_\ell+k}}
	\ge
	\tfrac12 \vex \abs{ S_{a_\ell} \cup S_{a_\ell+1} \cups S_{a_\ell+k-1} }
	\Qforall
	k \in [1, b_\ell - a_\ell].
	\]
	
	The first key fact about $\good$ is that it comprises an order-$1$ proportion of the stationary mass~of~$S$.
	
	\begin{lemma}
		\label{res:pi-calS_pi-S}
		For all $S \subseteq V$ with $\abs S \le \tfrac12 n$,
		we have
		\[
		\piw(\good)
		\ge
		\tfrac1{22}
		\piw(S).
		\]
	\end{lemma}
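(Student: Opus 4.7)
The plan is to bound the total $\piw$-mass lost in the ``gaps'' $(b_\ell, a_{\ell+1})$ between successive representative intervals $\good_\ell$, and compare it with $\piw(\good_\ell)$ itself, so as to show $\piw(S \setminus \good) \le 21 \piw(\good)$.

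First I would analyse a single gap. Fix $\ell \in [L]$ and consider the indices $i$ with $b_\ell < i < a_{\ell+1}$. By definition of $a_{\ell+1}$, for every such $i$ we have $\abs{S_i} \le \abs{S_{i-1}}$, so the sizes are non-increasing along the gap, and hence all bounded by $\abs{S_{b_\ell+1}} \le \tfrac12 \vex \abs{\good_\ell}$ (the latter being the defining property of $b_\ell$). Since $S_i \subseteq V_i$ gives $\piw(S_i) \le \abs{S_i}\cdot e^{-(i-1)}$, summing a geometric series yields
\[
\sum_{i=b_\ell+1}^{a_{\ell+1}-1} \piw(S_i)
\le
\tfrac12 \vex \abs{\good_\ell} \sum_{i \ge b_\ell+1} e^{-(i-1)}
=
\tfrac12 \vex \abs{\good_\ell} \cdot e^{-b_\ell} \cdot \tfrac{e}{e-1}.
\]

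Next I would produce a matching lower bound for $\piw(\good_\ell)$ using only the size of its top layer $S_{b_\ell}$. Inside $\good_\ell$, the failure of the stopping rule at each step $a_\ell+k-1$ for $k \in [1, b_\ell - a_\ell]$ gives $\abs{S_{a_\ell+k}} > \tfrac12 \vex \abs{S_{a_\ell} \cup \cdots \cup S_{a_\ell+k-1}}$. Writing $C_k$ for the partial sums and solving the resulting inequality yields $\abs{S_{b_\ell}} \ge \tfrac{\vex}{\vex+2} \abs{\good_\ell}$ (trivially true if $b_\ell = a_\ell$). Using $\piw(\good_\ell) \ge \piw(S_{b_\ell}) > \abs{S_{b_\ell}} e^{-b_\ell}$, this yields
\[
\piw(\good_\ell) \ge \tfrac{\vex}{\vex+2} \abs{\good_\ell} e^{-b_\ell}.
\]

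Dividing the two estimates, the factor $\abs{\good_\ell} e^{-b_\ell}$ cancels, leaving the gap-to-block ratio bounded by $\tfrac{(\vex+2)e}{2(e-1)}$, which with $\vex = e^2$ is below $8$. Summing over $\ell$ and using $\sum a_\ell / \sum b_\ell \le \max_\ell a_\ell/b_\ell$, one concludes $\piw(S\setminus \good) \le 8\, \piw(\good)$, giving $\piw(\good) \ge \tfrac{1}{9}\piw(S) \ge \tfrac{1}{22}\piw(S)$ with room to spare. I expect the main obstacle to be rigorously establishing $\abs{S_{b_\ell}} \ge \tfrac{\vex}{\vex+2}\abs{\good_\ell}$ from the recursive defining inequality (i.e., chasing the linear recurrence $C_k > (1+\vex/2)C_{k-1}$ correctly); the geometric-series bookkeeping for the gaps is then routine.
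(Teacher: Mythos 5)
Your proof is correct and follows essentially the same route as the paper: the same decomposition of $S$ into the blocks $\good_\ell$ and the gaps between them, the same use of the defining inequalities for $a_\ell$ and $b_\ell$, and the same level-by-level geometric decay of $\piw$, the only difference being that you compare the gap mass to $\abs{\good_\ell}$ via $\abs{S_{b_\ell}} \ge \tfrac{\vex}{\vex+2}\abs{\good_\ell}$ where the paper compares it directly to $\piw(S_{b_\ell})$, which even yields the slightly better constant $\tfrac19$. The step you flagged as the main obstacle is a one-line rearrangement rather than a recurrence chase: minimality of $b_\ell$ gives $\abs{S_{b_\ell}} > \tfrac12\vex\bigl(\abs{\good_\ell}-\abs{S_{b_\ell}}\bigr)$, which immediately rearranges to the claimed bound (and is trivial when $b_\ell=a_\ell$).
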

	
	\begin{Proof}
		We upper bound the contribution to $\piw(S)$ from \emph{outside} $\good$:
		set
		\[
		\mathcal N_\ell
		\cq
		S_{b_\ell+1} \cup S_{b_\ell+2} \cups S_{a_{\ell+1}-2} \cup S_{a_{\ell+1}-1}
		\Qfor
		\ell \in [L].
		\]
		
		Fix $\ell \in [L]$.
		The definition of $b_\ell$ used twice
		and the disjointness of the union
		implies that
		\[ 
		\abs{ S_{b_\ell+1} }
		\le
		\tfrac12 \vex \abs{ S_{a_\ell} \cups S_{b_\ell-1} \cup S_{b_\ell} } \\
		=
		\tfrac12 \vex \bigl( \abs{ S_{a_\ell} \cups S_{b_\ell-1} } + \abs{ S_{b_\ell} } \bigr)
		\le
		(1 + \tfrac12 \alpha) \abs{ S_{b_\ell} }.
		\]
		In turn, this immediately implies that
		\[
		\piw(S_{b_\ell+1})
		\le
		(1 + \tfrac12 \vex) \piw(S_{b_\ell}), 
		\]
		recalling that $\piw(u) < 1/e^i \leq \piw(v)$ whenever $u \in S_{i+1}$ and $v \in S_i$, for any $i$.
		
		Moreover, for any $i, k \in \mbn$, if $u \in S_{i+k}$ and $v \in S_i$, then
		\(
		\piw(u) / \piw(v) \le 1/e^{k-1}.
		\)
		Additionally, the map $k \mapsto \abs{S_{b_\ell+k+1}} $, where $k \in [0, a_{\ell+1} - b_\ell - 2]$, 
		is decreasing by definition of $a_{\ell+1}$.
		Together, these imply that
		\[
		\piw(S_{b_\ell+1 + k})
		\le
		1/e^{k-1} \piw(S_{b_\ell+1})
		\Qforall
		k \in [0, a_{\ell+1} - b_\ell - 2].
		\]
		This exponential decay in the stationary mass along the subsequence $\mathcal N_\ell$ implies that
		\[
		\piw(\mathcal N_\ell)
		\le
		\tfrac{e}{1-1/e}
		\piw(S_{{b_\ell}+1})
		\le
		\tfrac{e}{1-1/e} (1 + \tfrac12 \vex) \piw(S_{b_\ell})
		\le
		21 \piw(\good_\ell),
		\]
		recalling that $\vex = e^2$.
		The claim now follows readily:
		\[\ts
		\piw(S)
		=
		\sum_{\ell \in [L]}
		\bigl( \piw(\good_\ell) + \piw(\mathcal N_\ell) \bigr)
		\le
		\sum_{\ell \in [L]}
		22 \piw(\good_\ell)
		=
		22 \piw(\good).
		\qedhere
		\]
	\end{Proof}

	The second key fact is that each $S_{b_\ell}$ ($\ell \in [L]$) comprises an order-1 proportion of the stationary mass of $R_{\ell}$.
	
	\begin{lemma}
		\label{res:robust:pi(S_bl)>pi(mcS_l)}
		For
		all $S \subseteq V$ with $1 \leq \abs S \le \tfrac12 n$
		and
		all $\ell \in [L]$,
		we have
		\[
		\piw(S_{b_\ell})
		\ge
		\tfrac1{11}
		\piw(\good_\ell).
		\]
	\end{lemma}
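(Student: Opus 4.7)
The plan is to leverage the growth condition defining $b_\ell$ to show that the sizes $|S_i|$ for $i \in [a_\ell, b_\ell]$ decay geometrically (backwards from $b_\ell$) at rate $2/\vex$, then combine this with the bucket definition to conclude that $\piw(S_{b_\ell})$ dominates the sum $\piw(\good_\ell)$ up to a universal constant.

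First, by minimality of $b_\ell$ in \cref{def:good_indices}, the defining inequality fails at $b' = j$ for each $j \in [a_\ell, b_\ell - 1]$, giving $|S_{j+1}| > \tfrac{1}{2}\vex\, |S_{a_\ell} \cup \cdots \cup S_j| \geq \tfrac{1}{2}\vex\,|S_j|$. Iterating this yields $|S_i| < (2/\vex)^{b_\ell - i}|S_{b_\ell}|$ for each $i \in [a_\ell, b_\ell - 1]$. Next, since $V_i$ consists of vertices with $\piw(v) \in (e^{-i}, e^{-i+1}]$, we have the bucket bounds $\piw(S_i) \leq |S_i|/e^{i-1}$ (upper) and $\piw(S_{b_\ell}) > |S_{b_\ell}|/e^{b_\ell}$ (lower). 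Combining these and re-indexing with $k = b_\ell - i$ gives
\[
\sum_{i=a_\ell}^{b_\ell - 1}\piw(S_i)
<
\frac{|S_{b_\ell}|}{e^{b_\ell - 1}}\sum_{k=1}^{b_\ell - a_\ell}(2e/\vex)^k
\leq
e \cdot \piw(S_{b_\ell}) \sum_{k=1}^{\infty}(2/e)^k
=
\frac{2e}{e-2}\,\piw(S_{b_\ell}),
\]
where the crucial step is that $\vex = e^2$ makes the common ratio $(2/\vex)\cdot e = 2/e < 1$, so the geometric series converges. Adding $\piw(S_{b_\ell})$ to both sides then yields $\piw(\good_\ell) < \frac{3e-2}{e-2}\,\piw(S_{b_\ell}) < 11 \cdot \piw(S_{b_\ell})$, since $(3e-2)/(e-2) < 11$ (equivalently, $e > 5/2$). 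The degenerate case $b_\ell = a_\ell$ is immediate, as then $\good_\ell = S_{b_\ell}$.

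The only obstacle is numerical: the choice $\vex = e^2$ in \cref{def:good_indices} was made precisely so that the geometric ratio $2/e$ is strictly less than $1$, matching the per-vertex $\piw$-Lipschitz factor $e$ established in \cref{lem:robust:lip-neighbours}. The constant $11$ in the statement is rather loose — any constant larger than $(3e-2)/(e-2) \approx 8.57$ would suffice for the same argument.
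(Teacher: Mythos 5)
Your proof is correct and follows essentially the same route as the paper's: both exploit the failure of the stopping condition at indices below $b_\ell$ to get geometric growth of $\abs{S_i}$ at rate $\tfrac12\vex$, pair it with the factor-$e$ decay of $\piw$ across consecutive buckets, and sum the resulting geometric series with ratio $2e/\vex = 2/e < 1$. The only difference is bookkeeping — you use the raw bucket bounds $\piw(S_i) \le \abs{S_i}e^{-(i-1)}$ and $\piw(S_{b_\ell}) > \abs{S_{b_\ell}}e^{-b_\ell}$ and treat the $k=0$ term separately, yielding the slightly sharper constant $(3e-2)/(e-2) \approx 8.57$ versus the paper's $e^2/(e-2) \approx 10.3$, both of which are below $11$.
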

	
	\begin{Proof}
		Fix $S \subseteq V$ and $\ell \in [L]$.
		For all $k \in [0, a_\ell - b_\ell]$, the following relations hold by definition of $b_\ell$ and $S_i$, namely the exponential decay of probabilities, respectively:
		\[
		\abs{S_{b_\ell}}
		\ge
		(\tfrac12 \vex)
		\abs{S_{b_\ell-1}}
		\ge
		\cdots
		\ge
		(\tfrac12 \vex)^k
		\abs{S_{b_\ell-k}}
		\Qand
		\min_{u \in S_{b_\ell}}
		\piw(u)
		\ge
		1/e^{k+1}
		\max_{v \in S_{b_\ell-k}}
		\piw(v).
		\]
		This implies that
		\[
		\piw(S_{b_\ell-k})
		\le
		\abs{S_{b_\ell-k}}
		\max_{v \in S_{b_\ell-k}}
		\piw(v)
		\le
		e
		(\tfrac12 \vex / e)^{-k}
		\abs{S_{b_\ell}}
		\min_{u \in S_{b_\ell}}
		\piw(u)
		\le
		e
		(\tfrac12 \vex / e)^{-k}
		\piw(S_{b_\ell}).
		\]
		Now since $\alpha=e^2>2e$, it follows $(\tfrac12 \vex / e) > 1$ which means that the upper bound above is exponentially decreasing in $k$. Hence,
		\begin{align*}\ts
			\piw(\good_\ell)
			=
			\sum_{k=0}^{a_\ell-b_\ell}
			\piw(S_{b_\ell-k})
			&\ts
			\le
			e \piw(S_{b_\ell})
			\sum_{k=0}^{a_\ell-b_\ell}
			(2e/\vex)^k
			\\&
			\le
			e (1 - 2e/\vex)^{-1}
			\piw(S_{b_\ell})
			\le
			11 \piw(S_{b_\ell}).
			\qedhere
		\end{align*}
	\end{Proof}
	
	Recall that we are interested in $Q_{P_w^{2K}}(S, S^c)$.
	We may decompose this as
	\[\ts
	Q_{P_w^{2K}}(S, S^c)
	\ge
	Q_{P_w^{2K}}(\good, S^c)
	=
	\sum_{\ell \in [L]}
	Q_{P_w^{2K}}(\good_\ell, S^c).
	\]
	Our next goal is to control $Q_{P_w^{2K}}(\good_\ell, S^c)$ for each $\ell$.
	The approach differs depending on $\vex \abs{\good_\ell}$.

	\begin{lemma}
		\label{res:robust:B2(S)-S}
		For all $S \subseteq V$ with $1 \leq \abs S \le \tfrac12 n$, and
		any $\ell \in [L]$,
		we have
		\[
		\abs{ \mcb_{2K}(\good_\ell) \setminus S }
		\ge
		\tfrac13
		\abs{\good_\ell}.
		\]
	\end{lemma}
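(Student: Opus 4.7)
My plan is to split into two cases based on the threshold $\vex|\good_\ell|\le n/2$ with $\vex=e^2$, arguing very differently in the two regimes. In both cases I lower bound $|\mcb_{2K}(\good_\ell)\setminus S|$ by working with a conveniently chosen subset of $\mcb_{2K}(\good_\ell)$: either $\mcb_K(S_{b_\ell})$ in the ``small'' regime, or $\mcb_{2K}(\good_\ell)$ itself in the ``large'' regime.

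In the small regime $|\good_\ell|<n/(2\vex)$, I use the inclusion $\mcb_K(S_{b_\ell})\subseteq\mcb_{2K}(\good_\ell)$, so it suffices to show $|\mcb_K(S_{b_\ell})\setminus S|\ge|\good_\ell|/3$. Since $|S_{b_\ell}|\le|\good_\ell|<n/(2\vex)$, \cref{res:robust:ball-expansion} gives $|\mcb_K(S_{b_\ell})|\ge\vex|S_{b_\ell}|$ without saturation, and by \cref{lem:robust:lip-neighbours} the intersection $\mcb_K(S_{b_\ell})\cap S$ is contained in $S_{b_\ell-1}\cup S_{b_\ell}\cup S_{b_\ell+1}$. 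The crux is controlling the adjacent blocks via \cref{def:good_indices}: the minimality of $b_\ell$ applied at index $b_\ell-1$ (when $b_\ell>a_\ell$) or the defining inequality of $a_\ell$ (when $b_\ell=a_\ell$) yields $|S_{b_\ell-1}|\le|S_{b_\ell}|$; the definition of $b_\ell$ yields $|S_{b_\ell+1}|\le(\vex/2)|\good_\ell|$, which combined with $|\good_\ell|\le(1+2/\vex)|S_{b_\ell}|$ (derived in both subcases $b_\ell=a_\ell$ and $b_\ell>a_\ell$ from the same two applications) gives $|S_{b_\ell+1}|\le(\vex/2+1)|S_{b_\ell}|$. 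Assembling,
\[
|\mcb_{2K}(\good_\ell)\setminus S|\ge|\mcb_K(S_{b_\ell})\setminus S|\ge(\vex/2-3)|S_{b_\ell}|\ge\tfrac{(\vex/2-3)\vex}{\vex+2}|\good_\ell|,
\]
and a numerical check with $\vex=e^2$ shows the last coefficient exceeds $1/3$.

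In the large regime $|\good_\ell|\ge n/(2\vex)$, we already have $|\mcb_K(\good_\ell)|\ge n/2$ by \cref{res:robust:ball-expansion}, so instead I squeeze the complement using \cref{res:robust:vertex-expansion}: once a set $T$ satisfies $|T|\ge n/2$ we have $|\mcb_1(T)^c|\le(1-\psi/3)|T^c|$, so iterating $K$ further steps gives $|\mcb_{2K}(\good_\ell)^c|\le(1-\psi/3)^K\cdot n/2$. The choice $K=\lceil 2/\log(1+\psi)\rceil$ combined with $\log(1-\psi/3)\le-\psi/3$ and $\log(1+\psi)\le\psi$ yields $(1-\psi/3)^K\le e^{-2/3}$. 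Since $|S|\le n/2$ and $|\good_\ell|\le|S|\le n/2$, I conclude
\[
|\mcb_{2K}(\good_\ell)\setminus S|\ge n-(n/2)e^{-2/3}-n/2=(n/2)(1-e^{-2/3})\ge n/6\ge|\good_\ell|/3.
\]
The main technical obstacle sits in the small regime: one has to extract enough bookkeeping from \cref{def:good_indices} to bound $|S_{b_\ell\pm 1}|$ by small multiples of $|S_{b_\ell}|$ while simultaneously relating $|S_{b_\ell}|$ to $|\good_\ell|$ from below, and the constants only just close with $\vex=e^2$, which is precisely the boosted vertex-expansion factor fixed at the start of the section.
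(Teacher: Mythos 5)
Your proof is correct, and it follows the paper's two-case skeleton (splitting at $\vex\abs{\good_\ell}\lessgtr\tfrac12 n$ and using \cref{res:robust:ball-expansion}, \cref{res:robust:vertex-expansion} and \cref{lem:robust:lip-neighbours}), but the execution differs in both regimes. In the small regime the paper balls around all of $\good_\ell$ and only needs $\abs{S_{a_\ell-1}}\le\abs{\good_\ell}$ and $\abs{S_{b_\ell+1}}\le\tfrac12\vex\abs{\good_\ell}$, giving $(\tfrac12\vex-2)\abs{\good_\ell}$ in one line; you instead ball around $S_{b_\ell}$, which forces you to additionally prove the counting inequality $\abs{\good_\ell}\le(1+2/\vex)\abs{S_{b_\ell}}$ (a cardinality analogue of what the paper only needs in stationary-mass form in \cref{res:robust:pi(S_bl)>pi(mcS_l)}) and $\abs{S_{b_\ell-1}}\le\abs{S_{b_\ell}}$; both follow from the minimality in \cref{def:good_indices} exactly as you say, and your constant $(\tfrac12\vex-3)\vex/(\vex+2)\approx 0.55$ indeed clears $1/3$, though with less slack than the paper's. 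In the large regime your argument is genuinely cleaner: rather than the paper's subcases 2a/2b (growing the ball by $(1+\tfrac16\psi)$ per step while $\abs{\mcb_{2K}(\good_\ell)}\le\tfrac23 n$, then a trivial bound otherwise), you contract the complement, using \cref{res:robust:vertex-expansion} to get $\abs{\mcb_1(T)^c}\le(1-\psi/3)\abs{T^c}$ whenever $\abs T\ge\tfrac12 n$, and iterating $K$ times with $K\ge 2/\psi$ yields $\abs{\mcb_{2K}(\good_\ell)^c}\le\tfrac12 e^{-2/3}n$ in one shot, after which subtracting $\abs S\le\tfrac12 n$ gives $\ge\tfrac{n}{2}(1-e^{-2/3})\ge\tfrac16 n\ge\tfrac13\abs{\good_\ell}$. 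The trade-off is symmetric: your Case 1 is slightly heavier on bookkeeping than the paper's, while your Case 2 removes a case distinction; both versions rely on the same boosted expansion constant $\vex=e^2$ and choice of $K$, so neither yields a stronger statement.
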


	\begin{Proof}
		Fix $S$ and $\ell$.
		We split the proof into two cases according to the size of $\abs{\good_\ell}$.
		
		\textit{Case 1.}
		Suppose first that $\vex \abs{\good_\ell} \le \tfrac12 n$.
		We apply \cref{res:robust:ball-expansion} to get
		\[
		\abs{ \mcb_K(\good_\ell) }
		\ge
		\min\{ \vex \abs{\good_\ell}, \: \tfrac12 n \}
		=
		\alpha \abs{\good_\ell}.
		\]
		By the definitions of $a_\ell$ and $b_\ell$, we have
		\[
		\abs{S_{a_\ell-1}}
		<
		\abs{S_{a_\ell}}
		\le
		\abs{\good_\ell}
		\Qand
		\abs{S_{b_\ell+1}}
		\leq
		\tfrac12 \vex \abs{\good_\ell}.
		\]
		Next, since by \cref{lem:robust:lip-neighbours} for all $i \geq 1$,
		$
		\mcb_K(V_i)
		\subseteq
		V_{i-1} \cup V_i \cup V_{i+1},
		$
		it follows that
		\begin{align*}
			\abs{ \mcb_K(\good_\ell) \setminus S }
			&
			\ge
			\abs{ \mcb_K(\good_\ell) \setminus \good_\ell } - \abs{S_{a_\ell-1}} - \abs{S_{b_\ell+1}}
			\nonumber
			\\&
			\ge
			\abs{ \mcb_K(\good_\ell) } - \abs{\good_\ell} - \abs{S_{a_\ell-1}} - \abs{S_{b_\ell+1}}.
		\end{align*}
		Putting all these parts together, we deduce that
		\[
		\abs{ \mcb_K(\good_\ell) \setminus S }
		\ge
		(\vex - 1 - 1 - \tfrac12 \vex) \abs{\good_\ell}
		=
		(\tfrac12 \vex - 2) \abs{\good_\ell}.
		\]
		In particular, $\mcb_{2K}(\good_\ell) \supseteq \mcb_K(\good_\ell)$, so
		\(
		\abs{\mcb_{2K}(\good_\ell) \setminus S}
		\ge
		\abs{\mcb_K(\good_\ell) \setminus S}
		\ge
		\tfrac13\abs{\good_\ell},
		\)
		as $\vex = e^2$.

		\textit{Case 2.}
		Suppose next that $\vex \abs{\good_\ell} \ge \tfrac12 n$. Again, we apply \cref{res:robust:ball-expansion}:
		
		\[
		\abs{ \mcb_K(\good_\ell) }
		\ge
		\min\{ \vex \abs{\good_\ell}, \: \tfrac12 n \}
		=
		\tfrac12 n.
		\]
		We consider two subcases.
		Recall that $\abs S \le \tfrac12 n$ always.
		
		\textit{Case 2a.}
		Suppose first that $\abs{ \mcb_{2K}(\good_\ell) } \le \tfrac23 n$.
		Then, by \cref{res:robust:vertex-expansion},
		\[
		\abs{ \mcb_{K+k+1}(\good_\ell) \setminus \mcb_{K+k}(\good_\ell) }
		\ge
		\tfrac13 \psi
		\cdot
		\tfrac13 n
		\ge
		\tfrac16 \psi
		\abs{ \mcb_{K+k}(\good_\ell) }
		\Qforall
		k \in \{0, 1, ..., K\}.
		\]
		In particular,
		\[
		\abs{ \mcb_{K+k+1}(\good_\ell) }
		\ge
		(1 + \tfrac16 \psi)
		\abs{ \mcb_{K+k}(\good_\ell) }
		\Qforall
		k \in \{0, 1, ..., K\}.
		\]
		Iterating this,
		\[
		\abs{ \mcb_{2K}(\good_\ell) }
		\ge
		(1 + \tfrac16 \psi)^K
		\abs{ \mcb_K(\good_\ell) }
		\ge
		\tfrac12 e^{1/3} n,
		\]
		using the fact that $(1 + \tfrac16 x)^{2/\log(1+x)} \ge e^{1/3}$ uniformly in $x > 0$,
		and that $\abs{ \mcb_K(\good_\ell) } \ge \tfrac12 n$.~%
		Hence,
		\[
		\abs{ \mcb_{2K}(\good_\ell) \setminus S }
		\ge
		\abs{ \mcb_{2K}(\good\ell) } - \abs S
		\ge
		\tfrac12 (e^{1/3} - 1) n
		\ge
		(e^{1/3} - 1) \abs{\good_\ell}
		\ge
		\tfrac13
		\abs{\good_\ell}.
		\]
		\textit{Case 2b.}
		Finally, suppose that $\abs{ \mcb_{2K}(\good_\ell) } \ge \tfrac23 n$.
		Then, simply
		\[
		\abs{ \mcb_{2K}(\good_\ell) \setminus S }
		\ge
		\abs{ \mcb_{2K}(\good_\ell) } - \abs S
		\ge
		\bigl( \tfrac23 - \tfrac12 \bigr) n
		=
		\tfrac16 n
		\ge
		\tfrac13
		\abs{\good_\ell}.
		\qedhere
		\]
	\end{Proof}

	We are now able to lower bound the edge conductance $Q_{P_w^{2K}}(\good_\ell, S^c)$ in terms of $\piw(\good_\ell)$.
	
	\begin{lemma}
		\label{res:robust:Q(Sl)>pi(Sl)}
		For all $S \subseteq V$ with $\abs S \le \tfrac12 n$, and
		any $\ell \in [L]$,
		we have
		\[
		Q_{P_w^{2K}}(\good_\ell, S^c)
		\ge
		\tfrac1{90} d^{-2K}
		\piw(\good_\ell).
		\]
	\end{lemma}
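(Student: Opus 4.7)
The plan is to lower bound the ergodic flow $Q_{P_w^{2K}}(\good_\ell, S^c)$ by building, for each $y \in \mcb_{2K}(\good_\ell) \setminus S$, an explicit $(2K)$-step walk from some $x_y \in \good_\ell$ to $y$, and controlling its weight via the $\sigma$-Lipschitz assumption.

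First, I would observe that the $\sigma$-Lipschitz condition together with $d$-regularity yields the uniform lower bound
\[
P_w(u,v) = \frac{w(u,v)}{w(u)} \ge \frac{w(u,v)}{d\,\sigma\, w(u,v)} = \frac{1}{d\sigma}
\Qforall \{u,v\} \in E,
\]
since $w(u) \le d\sigma\, w(u,v)$ for any neighbour $v$ of $u$. Hence any fixed walk of length $2K$ has probability at least $(d\sigma)^{-2K} = e^{-1} d^{-2K}$, using the defining identity $\sigma^{2K} = e$.

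Next, applying \cref{res:robust:B2(S)-S} gives $|\mcb_{2K}(\good_\ell) \setminus S| \ge \tfrac13 |\good_\ell|$. For each such $y$, I pick an arbitrary witness $x_y \in \good_\ell$ with $\dist(x_y, y) \le 2K$, and extend a shortest $x_y$-$y$ path into a walk of length exactly $2K$ by inserting back-and-forth edges, yielding $P_w^{2K}(x_y, y) \ge e^{-1} d^{-2K}$. Moreover, since $\good_\ell \subseteq V_{a_\ell} \cup \cdots \cup V_{b_\ell}$, the partition in \cref{def:Vipartition} gives $\piw(x_y) > e^{-b_\ell}$. Since the term $\piw(x_y) P_w^{2K}(x_y, y)$ appears in the sum defining $Q_{P_w^{2K}}(\good_\ell, S^c)$, summing over the $\ge \tfrac13 |\good_\ell|$ many values of $y$ gives
\[
Q_{P_w^{2K}}(\good_\ell, S^c) \ge \frac{|\good_\ell|}{3} \cdot e^{-b_\ell} \cdot \frac{d^{-2K}}{e}.
\]

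To finish, I would convert $|\good_\ell|\, e^{-b_\ell}$ into $\piw(\good_\ell)$ using \cref{res:robust:pi(S_bl)>pi(mcS_l)}. Since $S_{b_\ell} \subseteq \good_\ell$ we have $|S_{b_\ell}| \le |\good_\ell|$, and the bin bound $\piw(S_{b_\ell}) \le |S_{b_\ell}|\, e^{-b_\ell+1}$ together with $\piw(S_{b_\ell}) \ge \tfrac{1}{11}\piw(\good_\ell)$ yields
\[
|\good_\ell|\, e^{-b_\ell} \ge |S_{b_\ell}|\, e^{-b_\ell} \ge \piw(S_{b_\ell})/e \ge \piw(\good_\ell)/(11 e).
\]
Assembling the factors gives the desired bound, up to a careful tracking of the numerical constants.

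The main obstacle is the \emph{parity issue} in constructing an exact $(2K)$-step walk: back-and-forth padding can only increase a path's length by even amounts, so this scheme produces a length-$2K$ walk only when $\dist(x_y, y)$ has the same parity as $2K$. In bipartite graphs, one cannot always arrange this with a single $x_y$, and the fix requires either restricting to $y$ of the correct parity or enlarging slightly the ball and paying a constant factor; this does not change the order of magnitude but is the delicate point of the argument, and it affects the numerical constants obtained at the end.
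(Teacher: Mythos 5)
Your proposal is essentially the paper's proof: the same three ingredients (the minimal non-zero entry of $P_w^{2K}$ is at least $(d\sigma)^{-2K}=e^{-1}d^{-2K}$, the count $\abs{\mcb_{2K}(\good_\ell)\setminus S}\ge\tfrac13\abs{\good_\ell}$ from \cref{res:robust:B2(S)-S}, and the conversion to $\piw(\good_\ell)$ via the bucket structure and \cref{res:robust:pi(S_bl)>pi(mcS_l)}), combined in the same order; writing $\piw(x_y)>e^{-b_\ell}$ instead of $\min_{u\in\good_\ell}\piw(u)$ is only a cosmetic difference.

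Two caveats, both of which you share with (rather than fall short of) the paper. First, the constants: your factors multiply to $\tfrac13\cdot e^{-1}\cdot\tfrac1{11e}=\tfrac1{33e^2}\approx\tfrac1{244}$, and no amount of careful tracking will recover $\tfrac1{90}$ from this argument --- the paper's own intermediate bounds also multiply out to $\tfrac1{33}e^{-2}$, and its displayed $\tfrac1{33}e^{-1}$ drops a factor of $e$; so what this route genuinely yields is the lemma with a smaller (still order-$d^{-2K}$) constant. Second, the parity issue you flag is a real gap, and it is silently present in the paper as well: the argument needs $P_w^{2K}(x_y,y)>0$, and $\dist(x_y,y)\le 2K$ does not imply this for the non-lazy chain $P_w$. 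However, your proposed repair (restricting to $y$ of the right parity, or slightly enlarging the ball, at a constant-factor cost) cannot work in full generality: for a bipartite graph with unit weights --- e.g.\ the $d$-regular graph $K_{d,d}$ with $S$ equal to one side of the bipartition, for which $\good_1=S$ --- every $(2K)$-step transition from $\good_1$ lands back in $S$, so $Q_{P_w^{2K}}(\good_1,S^c)=0$ and the stated inequality itself fails. What is really needed there is an additional hypothesis (non-bipartiteness, or passing to the lazy chain at the cost of weakening $d^{-2K}$ to $(2d)^{-2K}$), not a bookkeeping adjustment; identifying this delicate point is to your credit, but it cannot be absorbed into the numerical constants as your last paragraph suggests.
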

	
	\begin{Proof} Using \cref{eq:weighted} and \cref{eq:lipschitz}, it follows that the minimal non-zero entry of $P_w$ is at least
		$
		1/(d \sigma).
		$
		Consequently, the minimal non-zero entry of $P_{w}^{2K}$ is at least $1/(d \sigma)^{2K} = e^{-1} d^{-2K}$. Hence,
		\[
		Q_{P_w^{2K}}(\good_\ell, S^c)
		\ge
		e^{-1} d^{-2K}
		\abs{ \mcb_{2K}(\good_\ell) \setminus S }
		\min_{u \in \good_\ell} \piw(u).
		\]
		The lower bound from \cref{res:robust:B2(S)-S} yields
		\(
		\abs{ \mcb_{2K}(\good_\ell) \setminus S }
		\ge
		\tfrac13
		\abs{\good_\ell}.
		\)
		Hence,
		\[
		Q_{P_w^{2K}}(\good_\ell, S^c)
		\ge
		\tfrac13 e^{-1} d^{-2K}
		\abs{\good_\ell}
		\min_{u \in \good_\ell} \piw(u).
		\]
		Recall that $\good_\ell \supseteq S_{b_{\ell}}$, and that $\piw(u')$ varies by a factor at most $e$ over $u' \in S_{b_{\ell}}$.
		Hence,
		\[
		\abs{\good_\ell}
		\min_{u \in \good_\ell} \piw(u)
		\ge
		\abs{S_{b_{\ell}}}
		\min_{u' \in S_{b_{\ell}}} \piw(u')
		\ge
		e^{-1}
		\piw(S_{b_\ell}),
		\]
		since $\arg\min_{u \in \good_\ell} \piw(u) \in S_{b_\ell}$, by definition.
		By \cref{res:robust:pi(S_bl)>pi(mcS_l)}, $\piw(S_{b_\ell}) \ge \tfrac1{11} \piw(\good_\ell)$.
		Hence,
		\[
		Q_{P_w^{2K}}(\good_\ell, S^c)
		\ge
		\tfrac1{33} e^{-1} d^{-2K}
		\piw(\good_\ell)
		\ge
		\tfrac1{90} d^{-2K}
		\piw(\good_\ell).
		\qedhere
		\]
	\end{Proof}
	
	We now put all these ingredients together to prove the desired edge conductance bound.
	
	\begin{Proof}[Proof of \cref{res:robust}]
		Recall that $\alpha \cq e^2$. Further, consider any $S \subseteq V$ with $1 \leq \abs S \le \tfrac12 n$.
		Summing over $\ell \in [L]$, and applying \cref{res:robust:Q(Sl)>pi(Sl)},
		\[\ts
		Q_{P_w^{2K}}(S, S^c)
		\ge
		\sum_{\ell \in [L]}
		Q_{P_w^{2K}}(\good_\ell, S^c)
		\ge
		\tfrac1{90} d^{-2K}
		\sum_{\ell \in [L]}
		\piw(\good_\ell)
		=
		\tfrac1{90} d^{-2K}
		\piw(\good).
		\]
		Finally, for such $S$, we use \cref{res:pi-calS_pi-S} to obtain
		\[
		Q_{P_w^{2K}}(S, S^c) / \piw(S)
		\ge
		\tfrac1{90 \cdot 22}
		d^{-2K}
		\ge
		\tfrac1{2000}
		d^{-2K}.
		\]
		
		Now choose an arbitrary $S \subseteq V$ with $S \notin \{\varnothing, V\}$.
		Then,
		\[
		\tfrac1{2000}
		d^{-2K}
		\le
		\begin{cases}
			Q_{P_w^{2K}}(S, S^c) / \piw(S)
			&\text{if}\quad
			\abs S \le \tfrac12 n,
			\\
			Q_{P_w^{2K}}(S^c, S) / \piw(S^c)
			&\text{if}\quad
			\abs S > \tfrac12 n.
		\end{cases}
		\]
		However, $Q_{P_w^{2K}}(S, S^c) = Q_{P_w^{2K}}(S^c, S)$ for all $S \subseteq V$.
		Hence, in either case,
		\[
		\widetilde \Phi_{P_w^{2K}}(S)
		\cq
		\frac{Q_{P_w^{2K}}(S, S^c)}{\piw(S) \piw(S^c)}
		\ge
		\tfrac1{2000}
		d^{-2K};
		\]
		note that $\widetilde \Phi_{P_w^{2K}}(S) = \widetilde \Phi_{P_w^{2K}}(S^c)$.
		Minimizing over $S \subseteq V$ with $S \notin \{\varnothing, V\}$ gives
		\[
		\widetilde \Phi_{P_w^{2K}}
		\cq
		\min_{S \subseteq V : S \notin \{\varnothing, V\}}
		\widetilde \Phi_{P_w^{2K}}(S)
		\ge
		\frac1{2000}
		d^{-2K}.
		\]
		Finally, $\Phi_{P_w^{2K}} \ge \tfrac12 \widetilde \Phi_{P_w^{2K}}$ since $\piw(S) < \tfrac12$ implies $1/\piw(S^c) \le 2$.
		
		Cheeger's inequality \cite[Lemma~3.3]{JS:approx-counting} (see also \eqref{eq:cheeger}) can be used to convert the isoperimetric bound $\Phi$ into the claimed bound on the spectral gap, using also the fact that the eigenvalues of $P_w^{2K}$ are those of $P$ raised to the $2K$-th power.
		This completes the proof.
	\end{Proof}
	
	Finally, we will show how to derive \cref{res:robustlite} from \cref{res:robust}.  
	\begin{proof}[Proof of \cref{res:robustlite}]  
		Following the assumptions of \cref{res:robustlite}, let $P$ be the transition matrix of a simple random walk on a $d$-regular graph and $\gamma$ be its spectral gap. Further, let $w:E \rightarrow \mathbb{R}+$ be a Lipschitz-weighting with parameter $\beta \leq 1 + \gamma/32$. Since for any graph, $0<\psi \leq 2$, and also $2/\log 3 \geq 3/2$, we conclude 
		\begin{equation}\label{eq:K} \left\lceil \frac{2}{\log(1+\psi)} \right\rceil \leq 2 \cdot \frac{2}{\log(1+\psi)}  = \frac{4}{\log(1+\psi)} .\end{equation}
		
		We will apply \cref{res:robust} with $\psi:=\Psi_{G}$, to do so we need to verify that $\sigma \geq \beta$, and then derive the form of the bound in the statement.  Indeed, by the definitions of $\sigma$ and $K$, 
		\[
		\sigma = e^{1/(2K)} = e^{1/(2 \lceil 2/\log(1+\psi) \rceil )} \stackrel{\eqref{eq:K}}{\geq} (1+\psi)^{1/8} = (1+\psi)^{1/\psi \cdot \psi/8} \stackrel{(*)}{\geq} 3^{\psi/16},
		\]
		where $(*)$ holds since $f(z)=(1+z)^{1/z}$ is decreasing for $0 < z \leq 2$. On the other hand,
		\[
		\beta  \leq 1+\gamma/32 
		\leq \exp(\gamma/32)  \stackrel{\cref{eq:vertexspectral}}{\leq} \exp(\Psi_G/16)=\exp(\psi/16) \leq 3^{\psi/16}.
		\]
		Thus, $\beta \leq \sigma$, and so we can apply~\cref{res:robust}  under the conditions of \cref{res:robustlite}. 
		
		Finally, $\log(1+\psi) \geq \frac{\psi}{2+\psi} \geq \psi/4 \geq \gamma/4 $ as $0<\psi\leq 2$ and by \eqref{eq:vertexspectral}, hence $4K \leq 16/\gamma$ by \eqref{eq:K}. Thus, \cref{res:robust} gives $ \gamma_{P_w} \ge 10^{-8} \cdot 
		d^{-4K} 
		\geq 10^{-8} \cdot d^{-16/\psi} $, as claimed.
	\end{proof}
	
	\subsection{Upper Bound on the Spectral Gap}
	
	The next result shows that it is essential to have an upper bound on $\beta$, that depends on some expansion measure, as otherwise the edge conductance may be polynomially small in $n$ for certain edge-weightings. Note that this is stated and proved using edge conductance, and we obtain \cref{pro:lowerspec} in terms of the spectral gap by applying Cheeger's inequality \cref{eq:cheeger}.  
	\begin{proposition}\label{pro:lower}
		Let $G$ be a $d$-regular graph with diameter $D \geq 4$. Then for any $\beta > 1$, there exists a $\beta$-Lipschitz edge-weighting $w:E \rightarrow \mathbb{R}^{+}$ such that the chain $P_w$ induced by $w$ satisfies 
		\[
		\Phi_{P_w} \leq \min \left\{ d^{\lfloor D/2 \rfloor-1},n \right\} \cdot \beta^{-\lfloor D/2 \rfloor + 3}.
		\]
	\end{proposition}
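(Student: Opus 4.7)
The plan is to construct a single concrete edge-weighting that creates an exponentially suppressed bottleneck through the ``middle'' of the graph. Fixing $u,v \in V$ with $\dist(u,v)=D$, I would set $f(x) := \dist(x,\{u,v\})$ and define
\[
w(x,y)
:=
\beta^{-\min(f(x),f(y))}
\Qfor
\{x,y\} \in E.
\]
The first subtask is to verify $\beta$-Lipschitzness. Since $f$ is $1$-Lipschitz on vertices, for any two edges $e_1=\{x,y\}, e_2=\{x,z\}$ sharing the vertex $x$, a short case analysis on whether $f(y),f(z)$ exceed $f(x)$ shows that the labels $\min(f(x),f(y))$ and $\min(f(x),f(z))$ differ by at most $1$, so $w(e_1)/w(e_2) \in [1/\beta,\beta]$.

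Next I would identify a good Cheeger cut. Writing $m := \floor{D/2}$, I consider the two balls $A := \mcb_{m-1}(u)$ and $B := \mcb_{m-1}(v)$. The triangle inequality together with $D \geq 2m$ forces $A \cap B = \varnothing$, so $\piw(A) + \piw(B) \leq 1$ and at least one of them carries stationary mass at most $1/2$; since $w$ depends only on the unordered pair $\{u,v\}$, I may relabel $u \leftrightarrow v$ and take $S := A$ with $\piw(S) \leq 1/2$.

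The main estimate is then to bound
\[
\Phi_{P_w}(S)
=
\frac{\sum_{e \in E(S,S^c)} w(e)}{\sum_{x \in S} w(x)}
\]
directly. For the numerator, any cut edge $e=\{x,y\}$ with $x \in S, y \in S^c$ must satisfy $\dist(x,u)=m-1$ and $\dist(y,u)=m$ by adjacency, and the triangle inequality then yields $\dist(x,v),\dist(y,v) \geq D-m \geq m$, so $f(x)=m-1$, $f(y)=m$, and $w(e)=\beta^{-(m-1)}$. Counting edges between the spheres of radius $m-1$ and $m$ around $u$ gives $|E(S,S^c)| \leq d^{m-1}\cdot d = d^m$, and trivially $|E(S,S^c)| \leq nd$. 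For the denominator, $f(u)=0$ forces $w(u)=d$, so $\sum_{x \in S} w(x) \geq d$. Combining these estimates yields
\[
\Phi_{P_w}(S)
\leq
\frac{\min(d^m,nd) \cdot \beta^{-(m-1)}}{d}
=
\min(d^{m-1},n) \cdot \beta^{-m+1}
\leq
\min(d^{m-1},n) \cdot \beta^{-m+3},
\]
where the last step uses $\beta \geq 1$, giving the claim.

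The plan has no real obstacle beyond bookkeeping: the Lipschitz case check and the triangle-inequality argument identifying the cut edges are both short, and the disjoint-balls trick lets me avoid any explicit computation of $\piw(S)$. The slack between my derived $\beta^{-m+1}$ and the stated $\beta^{-m+3}$ is exactly what keeps the constants clean in the statement.
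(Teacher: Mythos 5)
Your proposal is correct and follows essentially the same route as the paper: the identical weighting $w(x,y)=\beta^{-\dist(\{x,y\},\{u,v\})}$ (your $\min(f(x),f(y))$ is the same quantity), the same cut $\mcb_{\lfloor D/2\rfloor-1}(u)$ with the disjoint-balls/relabelling argument to ensure stationary mass at most $1/2$, and a direct bound on cut weight versus $w(u)=d$ in place of the paper's comparison of $\piw(B)$ with $\piw(u)$. Your bookkeeping even yields the slightly sharper exponent $\beta^{-\lfloor D/2\rfloor+1}$, which of course implies the stated bound since $\beta>1$.
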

	
	In particular this implies that for any bounded-degree $d$-regular graph $G$, as $ D=\Omega(\log n)$, we can find edge-weightings satisfying the Lipschitz condition with constant $\beta > d$ resulting in an edge conductance which is at most $n^{-\Omega(\log \beta)}$. Using the well-known connection between mixing times of (lazy random walks) and hitting times~\cite[Theorem~10.22]{levin2009markov}, this also implies that the maximum hitting time on these weighted graphs is at least $n^{\Omega(\log \beta)}$.

	\begin{Proof}[Proof of \cref{pro:lower}]
		Let $u,v\in V$ with $\dist(u,v)=D \geq 4$. For any $\beta\geq 1 $, we define the weight of any edge $\{x,y\}$ as follows:
		\begin{align}
			w(x,y) = \beta^{-\dist(\{x,y\},\{u,v\})}. \label{eq:weight_def}
		\end{align}
		Clearly, this edge-weighting satisfies the $\beta$-Lipschitz condition \eqref{eq:lipschitz}. Define \[B_1:=\left\{ w \in V \colon \dist(u,w) \leq \lfloor D/2 \rfloor - 1 \right\},\quad \text{and} \quad B_2:=\left\{ w \in V \colon \dist(v,w) \leq \lfloor D/2 \rfloor - 1 \right\}.\] 
		Since $D \geq 4$, it follows not only that $u \in B_1$ and $v \in B_2$, but also that $B_1$ and $B_2$ are disjoint. Hence, $\piw(B_1) \leq 1/2$ or $\piw(B_2) \leq 1/2$ (or possibly both). Without loss of generality, assume $\piw(B_1) \leq 1/2$. Since $u \in B_1$, we have $\piw(B_1) \geq \piw(u)$.
		Furthermore, let \[B:=\left\{ w \in V \colon \dist(u,w) = \lfloor D/2 \rfloor - 1 \right\}.\] Note that by the triangle inequality, any vertex $x$ with $\dist(x,u) = k \leq \lfloor D/2 \rfloor - 1$ must satisfy $\dist(x,v) > \lfloor D/2 \rfloor$. Hence the weight of any edge $\{x,z\}$ is equal to $\beta^{-\dist(\{x,z\},u)}$. Therefore, by \cref{eq:weight_def}, any vertex $x$ with $\dist(u,x)=k \leq \lfloor D/2 \rfloor - 1$ must also satisfy
		\[
		w(x) \leq d \cdot \beta^{-k+1},
		\]
		and vertex $u$ trivially satisfies $w(u) = d \cdot \beta^{-1}$,
		and so
		\[
		\frac{\piw(x)}{\piw(u)} = \frac{w(x)/W}{w(u)/W} \leq  \beta^{-k+2}.
		\]
		Hence it follows that
		\begin{equation}\label{eq:piupper}
			\piw(B) \leq \max_{x \in B} \pi(x) \cdot |B|
			\leq 
			\piw(u) \cdot \beta^{-\lfloor D/2 \rfloor +3} \cdot \min \left\{ d^{\lfloor D/2 \rfloor - 1},n \right\}.
		\end{equation}
		This implies for the ergodic flow,
		\[
		Q_{P_w}(B_1,B_1^c) = \sum_{x \in B_1, y \not\in B_1} 
		\piw(x) \cdot P_w(x,y) = \sum_{x \in B} \piw(x) \sum_{y \not\in B_1} P_w(x,y) \leq \piw(B),
		\]
		and therefore, as $\piw(B_1) \leq 1/2$, we have
		\[
		\Phi_{P_w} \leq \frac{Q_{P_w}(B_1,B_1^c)}{\piw(B_1)} 
		\leq \frac{\piw(B)}{\piw(B_1)}.\] Hence,  inserting the bound from \eqref{eq:piupper} gives the claimed bound:
		\[\Phi_{P_w}\leq \frac{\piw(u) \cdot \min \left\{ d^{\lfloor D/2 \rfloor-1},n \right\} \cdot \beta^{-\lfloor D/2 \rfloor+3}}{\piw(u)} = \min \left\{ d^{\lfloor D/2 \rfloor-1},n \right\} \cdot \beta^{-\lfloor D/2 \rfloor+3}.
		\qedhere\] 
	\end{Proof}

	\section{Upper Bounds on the Cover Times of the Time-Biased Walk}\label{sec:cover}

	Given a graph $G$ and a non-empty vertex subset $U\subseteq V(G)$, we use a specific set of edge weights in order to define a so-called \textit{biased random walk towards the set $U$}.  Specifically, for some $0\leq \biasname< 1$ and for every edge $e=\{u,v\} \in E(G)$, the weight of the edge is defined as \begin{equation}
		\label{edgeweights}
		w(u,v) = (1-\biasname)^{ \max\{ \dist(u,U), \,\dist(v,U) \}}.\end{equation}  We denote the transition matrix of this walk by $Q:=Q(U,\biasname)$, that is \[Q(u,v)=\begin{cases}
		w(u,v)/w(u) & \text{ if } \{u,v\} \in E, \\
		0 & \text{ otherwise }
	\end{cases},  \] where $w(u)=\sum_{\{u,v\}\in E}w(u,v)$. Generally $\biasname$ will be upper bounded by some small $\eps \leq 1$. Observe that for any $u,v,z \in V$ such that  $u \sim v$ and   $v \sim z$, we have
	\begin{equation}\label{eq:x-walk-lipschitz}
		1-\biasname \leq \frac{w(u,v)}{w(v,z)} \leq \frac{1}{1-\biasname};
	\end{equation}
	thus, this weighting scheme is $\beta$-Lipschitz \eqref{eq:lipschitz} for $\beta= \frac{1}{1-\biasname}\geq 1$.

	The following result is similar to \cite[Prop.\ 4.5]{ETB}, but we include a proof for completeness. 
	\begin{proposition}\label{weightedetb}
		Let $ \biasname \in [0,1)$ be arbitrary, and $G$ be any graph with minimum degree at least $3$, and $U \subseteq V$. Let $P$ be the simple random walk on $G$ and $Q:=Q(U,\biasname)$  be as given by \eqref{edgeweights}. Then, for any $\biasname\leq \eps \leq 1 $, there exists a stochastic matrix $B$ such that $(1-\eps)P+\eps B = Q$.  
	\end{proposition}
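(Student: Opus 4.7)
The plan is to explicitly define $B \cq (Q - (1-\eps)P)/\eps$ and then verify that this matrix is stochastic with support on $E(G)$. The rows-sum-to-one property is immediate: since $P$ and $Q$ are both stochastic, each row of $\eps B = Q - (1-\eps)P$ sums to $1 - (1-\eps) = \eps$. The support condition is also immediate: for non-adjacent $u,v$, both $P(u,v) = 0$ and $Q(u,v) = 0$, so $B(u,v) = 0$; likewise for the diagonal, since $G$ has no self-loops.

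The substantive step is to show $B(u,v) \ge 0$ for every edge $\{u,v\} \in E$, equivalently $Q(u,v) \ge (1-\eps)/d(u)$. For this I would read off the exact value of $w(u,z)$ from \eqref{edgeweights} depending on how the distance $\dist(z,U)$ compares with $\dist(u,U)$. Writing $k \cq \dist(u,U)$, any neighbour $z$ of $u$ has $\dist(z,U) \in \{k-1,k,k+1\}$, so the weight $w(u,z)$ equals either $(1-\biasname)^{k}$ or $(1-\biasname)^{k+1}$. Consequently,
\[
w(u,v) \ge (1-\biasname)^{k+1}
\Qand
w(u) = \sum_{z \sim u} w(u,z) \le d(u) (1-\biasname)^{k}.
\]
Dividing gives $Q(u,v) = w(u,v)/w(u) \ge (1-\biasname)/d(u)$, and since $\biasname \le \eps$ we obtain $Q(u,v) \ge (1-\eps)/d(u) = (1-\eps) P(u,v)$, which is exactly what we need.

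There is no real obstacle: once the edge weights are unpacked, the required inequality is just the local Lipschitz estimate \eqref{eq:x-walk-lipschitz} applied to a maximum-weight neighbour of $u$ against a minimum-weight one. The assumption $\dmin \ge 3$ plays no role in this algebraic verification (it is only needed elsewhere in the paper where $Q$ is analysed as a walk in its own right), and the proof goes through uniformly for all $0 \le \biasname \le \eps \le 1$.
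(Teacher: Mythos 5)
Your proof is correct and follows essentially the same route as the paper: both define $B=(Q-(1-\eps)P)/\eps$ and reduce everything to the entrywise bound $Q(u,v)\ge (1-\eps)/d(u)$, which rests on the observation that all edge weights at $u$ lie in $\{(1-\biasname)^{k},(1-\biasname)^{k+1}\}$ with $k=\dist(u,U)$ (the paper phrases this as ``at most two distinct weights'' and reduces to $\eps=\biasname$, whereas you get all $\eps\ge\biasname$ in one step -- a minor streamlining, and your remark that $\dmin\ge 3$ is not needed here is also accurate). The only pedantic caveat is the degenerate case $\eps=0$ (forcing $\biasname=0$, $Q=P$), where the explicit formula for $B$ is undefined but any stochastic matrix works; the paper's proof shares this implicit assumption.
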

	
	\begin{Proof}
		
		If we  set $B(u,v) = \frac{1}{\eps}\left(Q(u,v) -  (1-\eps)/d(u)    \right)$ for any vertices $u,v \in V$, then \[(1-\eps)\cdot 1/ d(u) + \eps \cdot B(u,v) = Q(u,v)\quad \text{ for all }u,v\in V,\] as desired. We must now show that this choice for $B$ gives a stochastic matrix. Firstly, \[\sum_{v\sim u } B(u,v) = \frac{1}{\eps}\sum_{v\sim u } \left(Q(u,v) -  (1-\eps)/d(u)    \right) = \frac{1}{\eps}\left(1 - (1-\eps) \right)= 1,\] as $Q$ is a stochastic matrix.
		So, it remains to show that $B(u,v) \ge 0$ for all $u,v\in V$.

		Observe that for any $u\in V$, all edges adjacent to $u$ have one of at most two distinct weights  $w_1$ and $w_2$. Since $Q(u,v)$ is proportional to $w(u,v)$, we can assume without loss of generality that $w_1=1$ and $w_2=1-\biasname$. Say there are $b \le d(u)$ edges with weight $w_2$. Under these~assumptions 
		\[
		w(x)=(d(u)-b)\cdot 1 + b\cdot (1-\biasname) = d(u) -b\biasname.
		\]
		If all these edge have the same weight then we are done as $Q(u,v)=1/d(u) \in [0,1]$ and thus $B(u,v)=1/d(u) \geq 0$ for all $u,v\in V$. Hence, we assume that $1\leq b \leq d(u)-1$.
		Note that it suffices to prove the result for $\eps =  \biasname$ as if $\eps$ is larger than this, then the walk can always just not use the full power of the bias. 
		Finally,
		\[
		B(u,v) = \frac{1}{\eps}\left(\frac{w(u,v)}{w(u)} -  \frac{1-\eps }{d(u)}    \right) \geq  \frac{1}{\biasname}\left(\frac{1-\biasname}{d(u)-\biasname} -  \frac{1-\biasname }{d(u)}    \right)\geq 0.
		\qedhere
		\]  
	\end{Proof}
	
	We now prove a lower bound showing that the stationary probability of any set $U$ can be ``boosted'' by the edge-weighting of $Q(U,\biasname)$. The proof follows some ideas found in the proof of \cite[Thm.~4.6]{ETB}; note that the next lemma assumes $d \geq 3$, since the only connected, $2$-regular graphs are cycles, which are not expanders but are easily shown to have linear cover time.
	
	\begin{lemma}\label{lem:stationary_boost}
		Let $G$ be any $d$-regular graph with $d \geq 3$. Then, for any non-empty $U\subseteq V(G)$, $u \in U$, and $0 \leq \biasname \leq 1/3$, the walk given by $Q:=Q(U,\biasname)$ from \eqref{edgeweights} satisfies 
		\[
		\pi_{Q}(u) \geq \frac{1}{2d|U|} \cdot  \left( \frac{|U|}{n} \right)^{1+\log(1-\biasname)/\log d}.
		\]
	\end{lemma}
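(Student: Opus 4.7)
The plan is to compute $\pi_Q(u)=w(u)/W$ for $u \in U$ by bounding the numerator from below and the denominator from above, where $W=\sum_{x\in V} w(x)$.

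First I would handle the numerator. Since $u \in U$ means $\dist(u,U)=0$, every neighbour $v$ of $u$ satisfies $\dist(v,U)\le 1$, so each edge $\{u,v\}$ has weight in $\{1,1-\theta\}$ by the definition \eqref{edgeweights}. Hence $w(u)\ge d(1-\theta)$. This is the easy part.

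Next, I would bound the denominator. For any $x\in V$ at distance $k=\dist(x,U)$ from $U$, each incident edge has weight at most $(1-\theta)^k$ (because one endpoint has distance exactly $k$, so the maximum in \eqref{edgeweights} is at least $k$). Thus $w(x)\le d(1-\theta)^{\dist(x,U)}$, and setting $N_k:=|\{x\in V:\dist(x,U)=k\}|$ and $A_k:=|\mcb_k(U)|=\sum_{j\le k}N_j$, I obtain
\[
W \;\le\; d\sum_{k\ge 0} N_k (1-\theta)^k \;=\; d\theta\sum_{k\ge 0} A_k (1-\theta)^k,
\]
where the last equality is Abel summation using $N_k=A_k-A_{k-1}$ and $(1-\theta)^k-(1-\theta)^{k+1}=\theta(1-\theta)^k$. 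The point of this manipulation is that it introduces a beneficial factor of $\theta$ out front, which is what lets us avoid a spurious $1/\theta$ in the final estimate.

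The main technical step is then to bound $\sum_{k\ge 0} A_k (1-\theta)^k$. I would use the two inequalities $A_k\le 2|U|d^k$ (valid for $d\ge 2$, from the standard ball-growth bound $1+d+\cdots+d^k\le 2d^k$) and $A_k\le n$, and split the sum at $k^\star:=\log(n/|U|)/\log d$, which is where the two bounds cross. Writing $\alpha:=1+\log(1-\theta)/\log d$, the crucial identity is
\[
d^{\alpha k^\star}=(d(1-\theta))^{k^\star}=d^{k^\star}(1-\theta)^{k^\star}=(n/|U|)^{\alpha}.
\]
For $k\le k^\star$ the summand is bounded by $2|U|d^{\alpha k}$, giving a geometric series of ratio $d^\alpha=d(1-\theta)\ge 2$ (since $d\ge 3$ and $\theta\le 1/3$), which sums to $O(|U|(n/|U|)^\alpha)$. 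For $k>k^\star$ the summand is bounded by $n(1-\theta)^k$, a geometric series of ratio $1-\theta$, summing to at most $n(1-\theta)^{k^\star}/\theta=|U|(n/|U|)^\alpha/\theta$. Multiplying by the prefactor $\theta$ cancels exactly this $1/\theta$, and combining both contributions yields $\theta\sum_k A_k(1-\theta)^k \le C\,|U|(n/|U|)^\alpha$ for an absolute constant $C$.

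Putting the pieces together,
\[
\pi_Q(u)\;=\;\frac{w(u)}{W}\;\ge\;\frac{d(1-\theta)}{C d|U|(n/|U|)^\alpha}
\]
and tracking constants using $d\ge 3$ and $\theta\le 1/3$ (so that $d(1-\theta)\ge 2$ keeps both geometric ratios comfortably bounded away from $1$) delivers the claimed bound $\frac{1}{2d|U|}(|U|/n)^{1+\log(1-\theta)/\log d}$. I expect the only fiddly part to be verifying that the constants work out to give the specific denominator $2d|U|$ claimed in the statement; the Abel-summation step is the conceptual key, without which the naive tail estimate contributes an unwanted $1/\theta$ factor.
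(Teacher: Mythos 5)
Your argument is correct, and it follows the same overall skeleton as the paper's proof: lower-bound the numerator by $w(u)\ge d(1-\theta)$, upper-bound $W$ using $w(x)\le d(1-\theta)^{\dist(x,U)}$ together with the ball-growth bound $|\mcb_k(U)|\le 2|U|d^k$, and exploit $d(1-\theta)\ge 2$ (from $d\ge 3$, $\theta\le 1/3$) together with the identity $(d(1-\theta))^{\log_d(n/|U|)}=(n/|U|)^{1+\log(1-\theta)/\log d}$. Where you differ is in how the weighted sum over distance classes is controlled. The paper uses a rearrangement argument: since the per-vertex weight $d(1-\theta)^i$ is decreasing in $i$, the sum is maximized by packing all $n$ vertices as close to $U$ as possible, which truncates the sum at $r=\lceil\log_d(n/|U|)\rceil$ and leaves a single geometric series dominated by its top term. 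You instead keep the true distance profile, pass to cumulative ball sizes by Abel summation (picking up the helpful factor $\theta$), and split at $k^\star=\log_d(n/|U|)$ using $A_k\le 2|U|d^k$ below and $A_k\le n$ above; the $\theta$ prefactor then cancels the $1/\theta$ from the tail. Your route is slightly longer but avoids having to justify the extremal rearrangement step; the paper's is shorter but relies on that exchange argument. Your constants do work out: the split gives $\theta\sum_k A_k(1-\theta)^k\le \tfrac43|U|(n/|U|)^{\alpha}+|U|(n/|U|)^{\alpha}$, so $W\le \tfrac73 d|U|(n/|U|)^{\alpha}$ and $\pi_Q(u)\ge \tfrac{1-\theta}{(7/3)|U|}(|U|/n)^{\alpha}\ge\tfrac{1}{2d|U|}(|U|/n)^{\alpha}$ since $2d(1-\theta)\ge 4\ge 7/3$. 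One tiny caveat: your Abel-summation identity and the tail bound $n(1-\theta)^{k^\star}/\theta$ require $\theta>0$, whereas the lemma allows $\theta=0$; that case should be dispatched separately, which is immediate since then $\pi_Q$ is uniform and $1/n\ge 1/(2dn)$.
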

	\begin{Proof}
		To prove the claim, recall that $\pi_{Q}(u) = \frac{w(u)}{W}$. By \cref{edgeweights}, for every $u \in U$ and $v \sim u$, we have $w(u,v) \in [1-\biasname,1]$, and consequently,
		$w(u) \in [(1-\biasname) \cdot d, d]$, as $G$ is $d$-regular. Likewise for any vertex $v$ at distance $i$ from $u$ we have $w(v)\leq d\cdot (1-\biasname)^i $. Thus, if we let $U_i $ be the set of vertices at distance $i\geq 0$ from $U$, then by \eqref{eq:weights} 
		\[W:=\sum_{x \in V}
		w(x) = \sum_{i=0}^\infty \sum_{x \in U_i} w(x) \leq \sum_{i=0}^\infty \sum_{x \in U_i}d\cdot (1-\biasname)^i .\]
		
		Since there are $n$ vertices in total and the weights $d\cdot(1-\biasname)^i$ are decreasing in $i\geq 0$, the sum on the RHS above is maximized when all vertices not in $U$ are as close to $U$ as possible. Observe also that $|U_i| \leq |U|\cdot d^i$, and thus $r := \lceil \log_{d} (n/|U|) \rceil$ is the maximum distance a vertex $x\notin U$ can be from $U$ under this assumption, which maximizes $W$. Consequently,  
		\[ 
		W  \leq \sum_{i=0}^{r} |U| \cdot d^{i}\cdot d\cdot  (1-\biasname)^{i} = d|U| \cdot \sum_{i=0}^r \left( d (1-\biasname) \right)^{i}.
		\]
		Now, since $d \geq 3$ and $\biasname \leq \frac{1}{3}$, we have $d(1-\biasname) \geq 2$. Thus, from the above, 
		\begin{align*}
			W
			&
			\le
			2 \cdot d|U|  \cdot (d(1-\biasname))^{r}
			\\&
			\le
			2 \cdot d|U|  \cdot (d(1-\biasname))^{\log_{d} (n/|U|) +1}
			\\&
			=
			2 \cdot d|U|  \cdot d(1-\biasname)\cdot d^{\log_{d} (n/|U|)}\cdot (e^{\log(1-\biasname)})^{\frac{\log (n/|U|)}{\log d}}
			\\&
			=
			2\cdot d|U| \cdot  d(1-\biasname)\cdot \frac{n}{|U|} \cdot  \Bigl( \frac{n}{|U|} \Bigr)^{\log(1-\biasname)/\log d}
			\\&
			=
			2d^2 |U| \cdot (1-\biasname) \cdot \Bigl( \frac{n}{|U|} \Bigr)^{1+\log(1-\biasname)/\log d}.
		\end{align*}
		Since any edge with an endpoint in $U$ has weight at least $1-\biasname$,  it follows that for every $u \in U$,
		\[
		\pi_Q(u)
		\ge
		\frac{d(1-\biasname)}{W}
		\ge
		\frac{1}{2d|U|} \cdot \left( \frac{|U|}{n} \right)^{1+\log(1-\biasname)/\log d}.
		\qedhere
		\]
	\end{Proof}

	We will make use of the following bound, whose proof we include for completeness. 
	\begin{proposition}[folklore]\label{prop:diam}
		For any $d$-regular, connected graph $G$, the diameter satisfies
		\[
		D \leq 8 \cdot \frac{ \log n}{\Psi_G}.
		\]
	\end{proposition}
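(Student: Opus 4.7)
The plan is to show that, for any vertex $u \in V$, the ball $\mcb_R(u)$ exhausts $V$ once $R = 8 \log n / \Psi_G$; taking the maximum over pairs $u, v$ then gives the diameter bound. Write $\psi \cq \Psi_G$ for brevity, and split the growth of $\mcb_k(u)$ into two phases depending on whether its size has crossed $n/2$.

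In the first (doubling) phase, while $\abs{\mcb_k(u)} \le n/2$, the vertex expansion assumption yields $\abs{\mcb_{k+1}(u)} \ge (1 + \psi) \abs{\mcb_k(u)}$, which is exactly the iteration underlying \cref{res:robust:ball-expansion}. Starting from $\abs{\mcb_0(u)} = 1$, and using the elementary bound $\log(1+\psi) \ge \psi/2$ (valid for $\psi \in (0,2]$), the ball reaches size $\ge n/2$ after $k_1 \le \ceil{2 \log(n) / \psi}$ steps.

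In the second (saturation) phase, once $\abs{\mcb_k(u)} \ge n/2$, apply \cref{res:robust:vertex-expansion} with $S = \mcb_k(u)$. Since $\min\{\abs S, \abs{S^c}\} = \abs{S^c}$ in this regime, we obtain $\abs{\Gamma(\mcb_k(u)) \setminus \mcb_k(u)} \ge (\psi/3) \abs{\mcb_k(u)^c}$, which rearranges to the contraction $\abs{\mcb_{k+1}(u)^c} \le (1 - \psi/3) \abs{\mcb_k(u)^c}$. Iterating from $\abs{\mcb_{k_1}(u)^c} \le n/2$ drives the complement strictly below $1$, hence to $0$ by integrality, after $k_2 \le \ceil{3 \log(n)/\psi}$ further steps; here we use $-\log(1-\psi/3) \ge \psi/3$.

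Summing the two phases gives $k_1 + k_2 \le 2 + 5 \log(n)/\psi \le 8 \log(n)/\psi$ for $n \ge 4$ (the small cases $n \le 3$ are trivial, as then $D \le 2 \le 8 \log(n)/\psi$). Therefore $\mcb_{k_1+k_2}(u) = V$ for every $u$, proving $D \le 8 \log n / \Psi_G$. I do not anticipate any real obstacle: the whole argument is a routine ball-doubling argument, and the only mild care needed is bookkeeping the constants via the standard logarithmic inequalities $\log(1+x) \ge x/2$ and $-\log(1-x) \ge x$ on the relevant ranges so that the total comes out to the advertised constant $8$.
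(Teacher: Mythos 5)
Your proof is correct, but it completes the argument differently from the paper. Both proofs start identically: grow a ball via \cref{res:robust:ball-expansion} (with $\log(1+\psi)\ge\psi/2$) until it has size at least $n/2$. The paper then stops growing and instead grows balls of radius $k=\lceil\log_{1+\Psi_G}(n/2)\rceil$ around \emph{both} endpoints $u$ and $v$; two sets of size at least $n/2$ either intersect or form a partition joined by an edge (by connectivity), giving $\dist(u,v)\le 2k+1$, and the constant $8$ follows from the same logarithmic bookkeeping. You instead run a one-ball, two-phase argument: after the ball around a single vertex reaches size $n/2$, you invoke \cref{res:robust:vertex-expansion} (the extension of vertex expansion to sets of size above $n/2$) to get the complement contraction $\abs{\mcb_{k+1}(u)^c}\le(1-\psi/3)\abs{\mcb_k(u)^c}$, and drive the complement below $1$; your constants ($k_1+k_2\le 5\log n/\psi+2\le 8\log n/\psi$ for $n\ge 4$, small $n$ handled trivially) check out, as does the paper's (which handles $n\le 7$ trivially). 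The trade-off: the paper's route is shorter and needs only the small-set ball-growth lemma, exploiting the symmetry of growing from both endpoints; your route needs the large-set expansion lemma and slightly tighter constant tracking, but it proves the marginally stronger statement that the ball of radius $8\log n/\Psi_G$ around \emph{every} vertex is all of $V$ (an eccentricity/radius bound), and it avoids the "two half-sets intersect or are adjacent" observation.
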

	\begin{Proof}
		As $\Psi_G \leq 2$ and $D \leq n-1$, the claim holds trivially for any $n \leq 7$, so we may assume $n \geq 8$ in what follows.
		Take any two vertices $u,v \in V$. Then by \cref{res:robust:ball-expansion}, it follows that for any $k \geq 0$,
		\[
		|\mathcal{B}_{k}(u) |
		\geq \min\{(1+\Psi_G)^k,n/2\},
		\]
		and the same inequality holds if $u$ is replaced by $v$. Choosing $k := \lceil \log_{1+\Psi_G}(n/2) \rceil$ implies $\mathcal{B}_{k}(u) $ and $\mathcal{B}_{k}(v) $ both contain at least $n/2$ vertices, and thus $\dist(u,v) \leq 2 k + 1$. Since $u,v$ are arbitrary,
		\begin{align*}
			D \leq 2 \cdot \lceil \log_{1+\Psi_G}(n/2) \rceil + 1 &\stackrel{(a)}{\leq} 4 \cdot  \log_{1+\Psi_G}(n/2) + 1 \\
			&= 4 \cdot \frac{ \log (n/2) }{ \log(1+\Psi_G)} + 1 \\
			&\stackrel{(b)}{\leq} 8 \cdot \frac{ \log n - 0.5 }{ \Psi_{G} } + 1 \\
			&\stackrel{(c)}{\leq} 8 \cdot \frac{\log n}{\Psi_G}
		\end{align*}
		where $(a)$ holds as $\log_{1+\Psi_G}(n/2) \geq \log_{3}(4) > 1$ (since $n \geq 8$ and $\Psi_G \leq 2$); $(b)$ holds as $\log(1 + \Psi_G) \ge \Psi_G/2$ for any $\Psi_G \in [0,2]$, and $\log(2) > 0.5$; and $(c)$ holds as $\Psi_G \leq 2$. 
	\end{Proof}
	
	The next result is the most involved component of the cover time proof, and it is also in this proof where we apply our lower bound on the spectral gap (\cref{res:robust}). While this result holds in greater generality, it might be useful for the reader to think of $d$ as being constant.
	\begin{proposition}\label{lem:keylemma_cover}
		Let $G$ be any $d$-regular graph with $d \geq 3$, $\Psi_{G} \geq 49\ln d/\ln n$, and $n$ sufficiently large. Fix any non-empty $U\subseteq V(G)$ and let $\biasname \in [0, 1- e^{- \Psi_G/32}]$. Then, for the walk $Q:=Q(U,\biasname)$ from \eqref{edgeweights},  the expected time taken until at least half of the vertices in $U$ are visited is at most
		\[
		\kappa\cdot \left( \frac{n}{|U|} \right)^{\log(1-\biasname)/\log d} \cdot n,
		\]
		where $\kappa:=\kappa(d,\Psi_G) = 10^{10} \cdot d^{26/\Psi_G}> 0$.
	\end{proposition}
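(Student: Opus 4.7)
The plan is to combine the robustness result (Theorem~\ref{res:robust}) with the stationary boost (Lemma~\ref{lem:stationary_boost}) and a standard reversible hitting-time bound, then aggregate over a decomposition of ``cover-half-of-$U$'' into phases, each waiting for a new vertex of $U$ to be visited. First, I would verify the hypotheses of Theorem~\ref{res:robust} for $Q = Q(U,\biasname)$. By \eqref{eq:x-walk-lipschitz}, $Q$ is induced by a $\beta$-Lipschitz edge-weighting with $\beta = 1/(1-\biasname) \leq e^{\Psi_G/32}$. A short computation (using $\log(1+x) \geq x/(1+x)$ for $x \geq 0$ and $\Psi_G \leq 2$) shows that $K = \lceil 2/\log(1+\Psi_G)\rceil$ satisfies $1/(2K) \geq \Psi_G/32$, so $\sigma = e^{1/(2K)} \geq \beta$ and Theorem~\ref{res:robust} yields $\gamma_Q \geq 10^{-8} d^{-16/\Psi_G}$. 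Since $\biasname \leq 1 - e^{-1/16} < 1/3$, Lemma~\ref{lem:stationary_boost} gives $\pi_Q(u) \geq \frac{1}{2d|U|}(|U|/n)^{1+\log(1-\biasname)/\log d}$ for every $u \in U$. To neutralise periodicity, I would pass to the lazy chain $\widetilde Q = (Q+I)/2$, whose spectral gap is $\gamma_Q/2$ and whose hitting times are at most twice those of $Q$.

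For the phase decomposition, let $T_0 = 0$ and, for $k \geq 1$, let $T_k$ be the first time $\widetilde Q$ has visited the $k$-th distinct vertex of $U$, and let $U_k \subseteq U$ be the set of vertices of $U$ still unvisited just before $T_{k-1}$, so $|U_k| = |U| - k + 1$. By the classical reversible-chain hitting-time bound (cf.\ \cite{aldousfill}), $\max_x \mathbb{E}_x[\tau_{U_k}] \leq C/(\gamma_Q\, \pi_Q(U_k))$ for a universal constant $C$. Since $\pi_Q(U_k) \geq \frac{|U_k|}{2d|U|}(|U|/n)^{1+\log(1-\biasname)/\log d}$, the strong Markov property gives
\[
\mathbb{E}[T_k - T_{k-1}]
\;\leq\;
\frac{2Cd|U|}{|U_k|\,\gamma_Q}\,(n/|U|)^{1+\log(1-\biasname)/\log d}.
\]

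Summing for $k = 1, \dots, \lceil |U|/2 \rceil$, using $\sum_k 1/|U_k| = \sum_{j=\lceil|U|/2\rceil+1}^{|U|} 1/j \leq \ln 2$ and $(n/|U|) \cdot |U| = n$, I obtain
\[
\mathbb{E}[T_{\lceil |U|/2\rceil}]
\;\leq\;
\frac{2Cd \ln 2}{\gamma_Q} \cdot n \cdot (n/|U|)^{\log(1-\biasname)/\log d}.
\]
Substituting $1/\gamma_Q \leq 10^8 d^{16/\Psi_G}$ and absorbing the factor of $d$ into $d^{10/\Psi_G}$ (valid since $\Psi_G \leq 2$) bounds the leading constant by $10^{10} d^{26/\Psi_G}$, as required.

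The main obstacle is the quantitative book-keeping: one must verify $\beta \leq \sigma$ sharply under the precise condition $\biasname \leq 1 - e^{-\Psi_G/32}$, and the constants must be tracked carefully through the aggregation to land exactly on $\kappa = 10^{10} d^{26/\Psi_G}$. A secondary subtlety is invoking the reversible hitting-time bound in a form with a true universal constant (using the lazy version of $Q$) rather than one carrying an extra $\log n$ factor, which would be incompatible with the stated bound.
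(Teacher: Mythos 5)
Your reduction to a hitting-time estimate hinges on the inequality $\max_x \mathbb{E}_x[\tau_{A}] \leq C/(\gamma_Q\,\pi_Q(A))$ with a \emph{universal} constant $C$, and this is not a true statement for reversible (even lazy) chains. From stationarity one does have $\mathbb{E}_\pi[\tau_A] \leq 1/(\gamma\,\pi(A))$, but for a worst-case start the best general bounds carry an extra factor of order $\log(1/\pi_{\min})$ (equivalently, an additive separation/mixing term of order $\gamma^{-1}\log(1/\pi_{\min})$). A counterexample to the log-free worst-case form: take the lazy random walk on a path $\{0,\dots,n\}$ with a constant bias towards $n$, and let $A$ be the half of the path nearest $n$; then $\gamma = \Theta(1)$ and $\pi(A) = 1-o(1)$, so your bound would give $O(1)$, yet from the vertex $0$ the hitting time of $A$ is $\Theta(n)$. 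In your argument the worst-case start is unavoidable: after the $(k-1)$-st vertex of $U$ is hit, the strong Markov property leaves you at a specific (adversarial) vertex, not at stationarity, so you must pay a re-mixing cost in every one of your $\lceil|U|/2\rceil$ phases. With the correct bound $\mathbb{E}_x[\tau_A] \lesssim t_{\mathrm{sep}} + \gamma_Q^{-1}/\pi_Q(A)$ and $t_{\mathrm{sep}} = \Theta(\gamma_Q^{-1}\log n)$ here (since $\pi_{\min} \geq n^{-3/2}$), your sum acquires an additive term of order $|U|\cdot\gamma_Q^{-1}\log n$, which for $|U| = \Theta(n)$ (exactly the regime needed for the first phase of the cover-time theorem) is $\Theta(n\log n)$ and exceeds the claimed $\kappa\, n\,(n/|U|)^{\log(1-\biasname)/\log d}$ bound, whose constant $\kappa$ contains no $\log n$. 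So the one-new-vertex-per-phase decomposition cannot deliver the stated estimate.

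This is precisely the difficulty the paper's proof is built to avoid: it works in epochs of fixed length $2\sqrt n$, spends the first $\sqrt n$ steps mixing (using the $\ell^2$-type bound \eqref{eq:JS} together with the spectral-gap estimate from \cref{res:robust} and the bounds $n^{-3/2}\leq \pi_Q \leq n^{-1/2}$), and in the second $\sqrt n$ steps lower-bounds, for \emph{every} unvisited $u\in U$ simultaneously, the probability of at least one visit via $\mathbb{P}[N_u\geq 1] = \mathbb{E}[N_u]/\mathbb{E}[N_u\mid N_u\geq 1]$, where $\mathbb{E}[N_u]$ is controlled by the boosted stationary mass (\cref{lem:stationary_boost}) and $\mathbb{E}[N_u\mid N_u\geq 1]$ by a return-probability bound $1+1/(1-\rho)$. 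In this way a single mixing cost is shared by all of $U$ in each epoch, and the halving follows from a geometric/Markov argument. Your verification that $\beta \leq \sigma$ and the use of \cref{res:robust} and \cref{lem:stationary_boost} are fine (up to constant bookkeeping in the exponent of $d$), but the central hitting-time step needs to be replaced by an argument of this parallel type; as written the proposal has a genuine gap.
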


	\begin{Proof} 
		For all time-steps $t \geq 0$, we will employ the walk $Q:=Q(U,\biasname)$ from
		\cref{edgeweights} for our given $\biasname \in (0, 1- e^{- \Psi_G/32}]$ and non-empty subset $U$. Note that $Q$ is fixed at time $0$ and does not change in future rounds, even if vertices from $U$ are visited. By \eqref{eq:x-walk-lipschitz} this edge-weighting is $\beta$-Lipschitz with
		\[\beta:= (1-\biasname)^{-1}\geq 1\]

		First recall from \cref{lem:pibound} that for any two vertices $u,v \in V$ at distance at most $i$ we have $\pi_{Q}(v) \leq \beta^{2i} \cdot \pi_{Q}(u)$.
		By an averaging argument, at least one vertex $w \in V$ must satisfy $\pi(w) \leq 1/n$, and thus as $D \leq 8(\log n)/\Psi_G$ by \cref{prop:diam} we have  
		\begin{equation}\label{eq:maxstat}
			\max_{u \in V} \pi_{Q}(u) \leq  \left(\beta^{2} \right)^{8(\log n)/\Psi_G} 
			\cdot \pi_Q(w) \leq  \beta^{16(\log n)/\Psi_G} \cdot 1/n. \end{equation}
		Due to the precondition $\biasname\leq 1- e^{- \Psi_G/32}$, $\beta$ satisfies  \begin{equation}\label{eq:walkbeta}\beta := (1-\biasname)^{-1} \leq  e^{\Psi_G/32}.\end{equation}
		Thus, from \eqref{eq:maxstat} and \eqref{eq:walkbeta},	\begin{equation}\label{eq:pi_max}\max_{u \in V} \pi_{Q}(u)\leq  ( e^{\Psi_G/32})^{16(\log n)/\Psi_G} \cdot 1/n = n^{-1/2}.\end{equation}
		By the same averaging argument there is at least one vertex $z \in V$ with $\pi_Q(z) \geq 1/n$, and so 
		\begin{equation}
			\min_{u \in V} \pi_{Q}(u) \geq  \left(\beta^{2} \right)^{-8(\log n)/\Psi_G}  \cdot \pi_{Q}(z) \geq n^{-3/2}. \label{eq:pi_min}
		\end{equation}

		For the remainder of the proof we work with the ``lazy'' version of $Q$, given by $\tilde{Q}:=(Q+I)/2$, where $I$ denotes the identity matrix. Note that $\pi_{\tilde{Q}}=\pi_{Q}$ and also $\gamma_{\tilde{Q}} =1-\lambda_2(\tilde{Q})$ since all eigenvalues of $\tilde{Q}$ are in $[0,1]$. The reason for this is that we wish to apply some convergence results. Note that it suffices to prove this lemma for $\tilde{Q}$ as the time it takes $\tilde{Q}$ to first visit $U$ is at least that of $Q$, since any two trajectories can be coupled by removing the lazy steps.   
		
		In the following, we divide the walk given by $\tilde{Q}:=\tilde{Q}(U,\biasname)$ into consecutive epochs of $2 \sqrt{n}$ time-steps; the first $\sqrt{n}$ steps are used for mixing and the second $\sqrt{n}$ steps for covering vertices in $U$. 
		Since $\tilde{Q}$ is lazy we can employ the well-known inequality \cite[Equation~12.13]{levin2009markov}:
		\begin{equation}\label{eq:JS}
			\left| \tilde{Q}^t(w,u) - \pi_{Q}(u) \right| \leq \sqrt{ \frac{\pi_Q(u)}{\pi_Q(w)} } \cdot \lambda_2(\tilde{Q})^{t},
		\end{equation}
		where $w$  is the (arbitrary) start vertex of the random walk. We will apply~\cref{res:robust}, with $K := \lceil 2/\log(1+\Psi_G) \rceil$, where in order to apply this we need to verify that the $\beta$-Lipschitz edge weighting we use is $\sigma$-Lipschitz, that is $\beta \leq \sigma$. This is indeed the case:
		\begin{align*}      
			\sigma = e^{1/(2K)} = e^{1/(2 \lceil 2/\log(1+\Psi_G) \rceil )} \overset{(a)}{\geq} e^{\log(1+\Psi_G)/8}  \overset{(b)}{\geq} e^{\Psi_G/16}  \overset{\eqref{eq:walkbeta}}{\geq} \beta, 
		\end{align*}
		where $(a)$ holds since for any graph, $0<\Psi_G  \leq 2$, and also $2/\log 3 \geq 3/2$, which implies $\left\lceil \frac{2}{\log(1+\Psi_G)} \right\rceil \le 4/\log(1+\Psi_G)$, and $(b)$ holds since $\log(1+\Psi_G) \geq \Psi_G/2 $ as $ 0<\Psi_G\leq   2$.    Now using $\tilde{Q}=\frac{1}{2} Q + \frac{1}{2} I$ and then applying~\cref{res:robust}, with $K := \lceil 2/\log(1+\Psi_G) \rceil$, we have 
		\begin{align}
			\lambda_{2}(\tilde{Q}) = 1 - \gamma_{\tilde{Q}} = 1 - \gamma_{Q}/2 \leq 1 - \frac{1}{2}\cdot \frac{1}{10^8 
			} \cdot d^{-4 K} =: \rho. \label{eq:rho_def}
		\end{align}
		By the conditions  $49\ln d/\ln n \leq \Psi_{G} \leq 2$, and since $ \ln(1+x) \geq x/2$ for all $0\leq x\leq 2$, we have \[K = \left\lceil \frac{2}{\ln(1+\Psi_G)} \right\rceil \leq \left\lceil \frac{4}{\Psi_G} \right\rceil \leq  \frac{3}{2}\cdot \frac{4}{\Psi_G} \leq \frac{6\ln n}{49\ln d} .  \] Hence, using $1-z \leq \exp(-z)$, it follows that for large $n$, 
		\[\rho^{\sqrt{n}} \leq \exp\left(-\frac{d^{-4 K}}{2\cdot 10^8} \cdot \sqrt{n} \right) \leq \exp\left(-\frac{ d^{-\frac{24\ln n}{49\ln d}} }{2\cdot 10^8} \cdot \sqrt{n} \right)  \leq \exp\left(-n^{1/100}\right) .  \] 
		For any $t \in [\sqrt{n},2 \sqrt{n})$, we use \cref{eq:JS} and apply \eqref{eq:rho_def} as well as the lower and upper bounds on $\pi$ in \eqref{eq:pi_max} and  \eqref{eq:pi_min} to conclude that for any pair of vertices $u,w \in V$, and large $n$, 
		\[
		\tilde{Q}^t(w,u) \geq \pi_{Q}(u) - \sqrt{ \frac{n^{-1/2}}{n^{-3/2}} } \cdot \rho^{\sqrt{n}} \geq  \pi_{Q}(u) -\sqrt{n}\cdot\exp\left(-n^{1/100}\right) \geq   \frac{1}{2}\cdot \pi_{Q}(u) .
		\]

		For any vertex $u \in U$, let $N_u$ denote the number of visits to $u$ by the walk $\tilde{Q}:=\tilde{Q}(U,\biasname)$ during the time-interval $[\sqrt{n},2\sqrt{n})$. Then, for large $n$ and an arbitrary start vertex $w\in V$ at time $0$, 
		\[
		\Ex{ N_u } = \sum_{s=\sqrt{n}}^{2\sqrt{n}-1} \frac{1}{2} \cdot \pi_{Q}(u) 
		\geq \frac{\sqrt{n}}{2} \cdot \frac{1}{|U|}   \cdot \frac{1}{2d} \cdot \left( \frac{|U|}{n} \right)^{1+\log(1-\biasname)/\log d},
		\]
		where the last inequality follows by \cref{lem:stationary_boost},
		which in turn uses the precondition $\biasname \leq 1/3$, which is satisfied since $ \biasname\leq  1- e^{- \Psi_G/32}\leq 1- e^{-2/32}<1/3$.
		
		Let us now consider the number of returns to a fixed but arbitrary vertex $u \in U$. 
		We will apply inequality \eqref{eq:JS} again, but this time there is some cancellation as the start/finish vertices are both $u$, that is
		\[
		\left| \tilde{Q}^t(u,u) - \pi_{Q}(u) \right| \leq \lambda_2(\tilde{Q})^{t}
		\]
		Using \cref{eq:rho_def} and the upper bound on $\pi_{Q}(u)$ from \cref{eq:pi_max}, we have
		\[
		\tilde{Q}^t(u,u) \leq \max_{u \in V} \pi_{Q}(u) + \lambda_2(\tilde{Q})^{t} \leq n^{-1/2} + \rho^{t}.
		\]
		Hence by summing,
		\[
		\sum_{s=0}^{\sqrt{n}-1} \tilde{Q}^s(u,u) \leq 1 + \frac{1}{1-\rho}.
		\]
		Since
		\[
		\Ex{ N_u } = \Pr{ N_u \geq 1} \cdot \Ex { N_u \, \mid \, N_u \geq 1},
		\]
		and
		\[
		\Ex { N_u \, \mid \, N_u \geq 1} \leq \sum_{s=0}^{\sqrt{n}-1} \tilde{Q}^s(u,u) \leq 1 + \frac{1}{1-\rho},
		\]
		it follows that for every vertex $u \in U$, and start vertex $w\in V$ we have
		\[
		\Pr{ N_u \geq 1 } = \frac{ \Ex{ N_u }}{\Ex { N_u \, \mid \, N_u \geq 1} } \geq \frac{\frac{\sqrt{n}}{4d|U|} \cdot \left( \frac{|U|}{n} \right)^{1+\log(1-\biasname)/\log d}}{ 1 + \frac{1}{1-\rho}} =: p.
		\]
		Now, let $U_t$ be the vertices left unvisited in $U$ after $t$ rounds; so $U_0=U$. Then,
		\begin{align*}
			\Ex{\left.|U_{2 \sqrt{n}}| ~\right|~ U_0 } &= \sum_{u \in U_0} \Pr{ N_u = 0} \leq \sum_{u \in U_0} \left(1 - p \right) = \left(1-p \right) \cdot |U_0|.
		\end{align*}
		More generally, for any $k \geq 1$,
		\begin{align*}
			\Ex{\left.|U_{k \cdot (2 \sqrt{n})}|~\right|~U_{(k-1) \cdot (2 \sqrt{n})}}
			&\leq \sum_{u \in U_{(k-1) \cdot (2 \sqrt{n}) }} (1-p) = (1-p) \cdot |U_{(k-1) \cdot (2 \sqrt{n})}|.
		\end{align*}
		By iterating such phases of length $2 \cdot \sqrt{n}$ and using the tower law of expectations, we conclude 
		\begin{align*}
			\Ex{\left.|U_{k \cdot (2 \sqrt{n})}|~\right|~U_0}  &\leq (1-p)^{k} \cdot |U_0|.
		\end{align*}
		Hence taking $k:=\lceil 1/p \rceil$ phases, it follows that \[ \Ex{\left.|U_{k \cdot (2 \sqrt{n})}|~\right|~U_0}  \leq e^{-1} \cdot |U_0|.\] Since $U=U_0$, by Markov's inequality,
		\[
		\Pr{ |U_{ \lceil 1/p \rceil \cdot (2 \sqrt{n})}| \geq |U|/2 } \leq \frac{2}{e}.
		\]
		Therefore the time $\tau$ until half of the set $U$ is visited is stochastically smaller than $(2 \sqrt{n}) \cdot X\cdot \lceil 1/p \rceil$, where   $ X\sim \mathsf{Geo}(1-\frac{2}{e})$ is a geometric random variable with success probability $1-\frac{2}{e}> 1/4$. Thus,  
		\begin{align*}
			\Ex{\tau} &\leq (2 \sqrt{n}) \cdot  \frac{1}{1-\frac{2}{e}} \cdot \left\lceil \frac{1 + \frac{1}{1-\rho}}{\frac{\sqrt{n}}{4d|U|} \cdot \left( \frac{|U|}{n} \right)^{1+\log(1-\biasname)/\log d}} \right\rceil   \\
			&< (8\sqrt{n}) \cdot\left( \frac{ 1 +2\cdot 10^8  \cdot d^{4K} }{\frac{\sqrt{n}}{4d|U|} \cdot \left( \frac{|U|}{n} \right)^{1+\log(1-\biasname)/\log d}} +1 \right) \\
			&= 32d \cdot (1 +2\cdot 10^8  \cdot d^{4K}) \cdot  n \cdot \left( \frac{n}{|U|} \right)^{\log(1-\biasname)/\log d} + 8\sqrt{n}  \\
			&\overset{(\star)}{\leq}  10^{10} \cdot d^{4K+1}  \cdot  n \cdot \left( \frac{n}{|U|} \right)^{\log(1-\biasname)/\log d},
		\end{align*}where $(\star)$ follows since $K\geq 1$, $d\geq 3$ and $\log(1-\biasname)/\log d > \log(2/3)/\log 3  > -1/2$. 
		
		Finally, recall that $   \log(1+\Psi_G)\geq \Psi_G/2$ as $ 0<\Psi_G\leq 2 $, thus \[4K +1 = 4 \cdot \Big\lceil \frac{2}{\log(1 + \Psi_G)}\Big\rceil  + 1 \leq 4\cdot \left(\frac{4}{ \Psi_G}  +1 \right)  + 1 \leq  \frac{26}{\Psi_G},\] and so   $\Ex{\tau} \leq \kappa \cdot n \cdot \left(  n/|U| \right)^{\log(1-\biasname)/\log d}$ with $\kappa:= 10^{10} \cdot d^{26/\Psi_G}$ as claimed.
	\end{Proof}
	
	By a repeated application of the previous proposition, we can now establish the main result. That is, for any bounded-degree regular expander and any constant bias $\eps>0$, there exists a strategy for the $\eps$-$\tbrw$ which achieves an $O(n)$ cover time.  
	\begin{theorem}\label{thm:cover}
		Let $G$ be any connected $d$-regular graph and $n$ sufficiently large. Then for any $0 \leq \eps \leq 1 - e^{-\Psi_G/32}$  ,  
		\[
		\tetb(G) \leq C\cdot n \cdot \min\{\eps^{-1}, \; \log n \},
		\]
		where $C :=C(d,\Psi_G) =  10^{10} \cdot d^{50/\Psi_G} > 0$.
		
	\end{theorem}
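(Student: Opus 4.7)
The plan is to cover $G$ in $k := \lceil \log_2 n \rceil$ phases, with each phase halving the unvisited set. Fix an arbitrary starting vertex $v_0 \in V$, set $U_0 := V \setminus \{v_0\}$, and inductively let $U_i$ denote the set of unvisited vertices at the start of phase $i$. Throughout phase $i$ the controller runs the edge-weighted walk $Q(U_i, \eps)$ from \cref{edgeweights}; by \cref{weightedetb} this walk is realisable by the $\eps$-TBRW, and since the $\eps$-TBRW supports history-dependent bias matrices, switching from $Q(U_i, \eps)$ to $Q(U_{i+1}, \eps)$ at the phase boundary is permitted (the set $U_i$ is determined by the history up to that time). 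Phase $i$ ends as soon as at least half of $U_i$ has been visited, so $|U_{i+1}| \le |U_i|/2$ holds surely, yielding $|U_i| \le n/2^i$; consequently, after $k$ phases, $U_k = \varnothing$ and $G$ has been covered.

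By \cref{lem:keylemma_cover}, applied with $\biasname = \eps$ (which lies in the admissible range by the theorem's hypothesis), the length $T_i$ of phase $i$ satisfies
\[
\Ex{T_i \mid U_i}
\le
\kappa \cdot n \cdot (n/|U_i|)^{\log(1-\eps)/\log d},
\qquad\text{with}\qquad
\kappa = 10^{10} d^{26/\Psi_G}.
\]
Setting $\theta := -\log(1-\eps)/\log d \ge 0$, the exponent $\log(1-\eps)/\log d$ is non-positive, so combining $|U_i| \le n/2^i$ with the monotonicity of $u \mapsto (n/u)^{\log(1-\eps)/\log d}$ gives $\Ex{T_i \mid U_i} \le \kappa n \cdot 2^{-i\theta}$. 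The tower rule then yields
\[
\ETBcov{v_0}{G}
\le
\sum_{i=0}^{k-1}
\kappa n \cdot 2^{-i\theta}.
\]

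For the $\log n$ branch (which in particular handles $\eps = 0$, where $\theta = 0$), each summand is trivially at most $\kappa n$, giving a total of $O(\kappa n \log n)$. For the $\eps^{-1}$ branch (when $\eps > 0$), the series is dominated by its infinite geometric extension $\kappa n / (1 - 2^{-\theta})$. Using $-\log(1-\eps) \ge \eps$ gives $\theta \ge \eps/\log d$, and a routine estimate (splitting cases $\theta \le 1$ and $\theta > 1$) gives $1/(1 - 2^{-\theta}) = O(\log d / \eps)$, whence the total is $O(\kappa n \log d / \eps)$. Both bounds have the form $C \cdot n \cdot \min\{\eps^{-1}, \log n\}$, and after folding the $\log d$ factor into $d^{50/\Psi_G}$ (valid since $\Psi_G \le 2$ implies $24/\Psi_G \ge 12$, and hence $\log d \le d^{24/\Psi_G}$ for $d \ge 2$), we obtain the stated $C = 10^{10} d^{50/\Psi_G}$.

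The main obstacle is essentially bookkeeping: one must verify that the described phase-adaptive strategy is a legitimate $\eps$-TBRW strategy (which follows from time-dependence being permitted in the model), that the deterministic halving $|U_{i+1}| \le |U_i|/2$ is correctly leveraged to replace the random $|U_i|$ by the worst-case $n/2^i$ inside a monotone function, and that the constants collapse to the stated $C$. All the heavy analytic lifting---the spectral-gap robustness of \cref{res:robust} and its consequence, the per-phase cover-time estimate of \cref{lem:keylemma_cover}---has already been done.
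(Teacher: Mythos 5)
Your phase decomposition, the per-phase bound via \cref{lem:keylemma_cover}, the geometric-series summation with the two branches of the $\min$, and the folding of constants into $d^{50/\Psi_G}$ all match the paper's argument. However, there is a genuine gap: you invoke \cref{lem:keylemma_cover} checking only that $\biasname=\eps$ lies in the admissible range $[0,1-e^{-\Psi_G/32}]$, but that proposition has two further hypotheses, namely $d\ge 3$ and $\Psi_G \ge 49\ln d/\ln n$, and \cref{thm:cover} is stated for \emph{any} connected $d$-regular graph, with no expansion assumption. For a $2$-regular graph (a cycle), or for any $d$-regular graph whose vertex expansion is below $49\ln d/\ln n$ (e.g.\ a ring of expanders), the lemma you rely on simply does not apply, so your proof as written does not cover these cases.

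The repair is short but must be supplied, and it is exactly what the paper does before entering the phase argument. For $d=2$ the graph is a cycle and one can bound the cover time directly (the paper uses a gambler's-ruin comparison giving $O(n/\eps)$); alternatively note that the $\eps$-TBRW can emulate the SRW, whose cover time on the cycle is $O(n^2)$, which is far below the claimed bound since $\Psi_G=\Theta(1/n)$ makes $C=10^{10}\cdot d^{50/\Psi_G}$ enormous. For the low-expansion regime $\Psi_G < 49\ln d/\ln n$ one observes that then $d^{50/\Psi_G} \ge n^{50/49} \ge n$, so the claimed bound $C\cdot n\cdot\min\{\eps^{-1},\log n\}$ already exceeds the $2n^2$ worst-case SRW cover time for regular graphs \cite{Feige97}, and again emulating the SRW suffices. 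Only after these reductions may one assume $d\ge 3$ and $\Psi_G\ge 49\ln d/\ln n$ and run your (otherwise correct) phase argument.
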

	
	Note that for $\eps> 1- e^{- \Psi_G/32}$, we can apply the theorem for $\eps =  1- e^{- \Psi_G/32}$ and use monotonicity. Furthermore, for $\eps=0$ and $\Psi_G >0$ a fixed constant, the theorem above recovers the well-known cover time bound of $O(n \log n)$ for a simple random walk on bounded-degree expanders. \cref{thm:covercheap} follows directly from \cref{thm:cover}, since $1 - e^{-\Psi_G/32} \geq 1 - e^{-\gamma/64} $, and $d^{50/\Psi_G}\leq d^{100/\gamma}$ as $\Psi_G \geq \gamma/2 $ by \eqref{eq:vertexspectral}, and $1-e^{-\gamma/64} \geq   \gamma/128$ as $\gamma \in  (0, 1]$. 
	
	\begin{Proof}Observe that there are no connected graphs with $d=1$, and  if $d=2$ then $G$ must be a cycle. We can stochastically dominate the cover time of the biased random walk on a cycle by the time taken for a gambler to go bankrupt in the ``gamblers ruin'' process started from a fortune of $n$, where the gambler has a probability of $(1+\eps)/2$ of loosing each bet. It follows from classical results that this time is at most $2n/\eps$ in expectation \cite[Chapter~3.9]{GrimStir}, which satisfies the bound in the statement. Hence, we will assume $d \geq 3$ in the remainder of the proof.
		
		Recall that the cover time of the simple random walk on any regular graph is at most $2n^2 $ by \cite{Feige97}. Thus, we can assume from now on that $\Psi_{G} \geq 49\ln d/\ln n $; otherwise the bound in the statement, which is at least $2 d^{50/\Psi_G}n $ gives a worse bound on $ \tetb(G)$ than the $2n^2$ bound for the SRW on $G$. Hence, with these  assumptions, we can apply \cref{lem:keylemma_cover} in what follows.
		
		We divide the process into at most $\log_{2} n + 1$ phases, and in each phase we will at least halve the unvisited set. As soon as the unvisited set has been halved, we proceed to the next phase and apply \cref{lem:keylemma_cover} (where $\biasname=\eps$) with an updated unvisited set $U$ and, correspondingly, an updated walk $Q:=Q(U,\biasname)$. The expected number of time-steps until the unvisited set has been covered completely is therefore upper bounded by 
		\begin{align}
			\sum_{k=0}^{\log_2 n} 	\kappa \cdot \left( \frac{n}{n\cdot 2^{-k}} \right)^{\log(1-\biasname)/\log d} \cdot n &= 	\kappa \cdot  n\cdot  \sum_{k=0}^{\log_2 n}2^{k\cdot \log(1-\biasname)/\log d}\notag \\
			&\leq \kappa  \cdot n \cdot \min\left( \frac{1}{1-2^{\log(1-\biasname)/\log d}}, \log_2 n + 1  \right).  \label{eq:geosum}
		\end{align}Recall the following two well-known bounds: $\log(1+z) \leq z$ for all $z>-1$,   also $e^{-z} \leq  1 - z/2$ for all $z \in  [0, 1.59]$. We will now utilise them to simplify the third term of \eqref{eq:geosum}, giving 
		\begin{equation}\label{eq:simple}
			\frac{1}{1-2^{\log(1-\eps)/\log d}}
			\leq
			\frac{1}{1-2^{-\eps/\log d}}
			=
			\frac{1}{1-e^{-\eps\cdot \log 2 / \log d}}
			\leq
			\frac{4\log d}{\eps}.
		\end{equation}
		Recall that $\kappa =10^{10} \cdot d^{26/\Psi_G} > 0$ from \cref{lem:keylemma_cover} and observe that $\log d \leq d^{1/\Psi_G}$ holds as $\Psi_G \leq 2$ for any graph $G$ and $d\geq 3$.
		Thus by \eqref{eq:geosum} and \eqref{eq:simple} and as $n$ is large, 
		the cover time is at most $\kappa'\cdot n \cdot \min\{1/\eps, \; \log n \} $, where $\kappa'= 4d^{1/\Psi_G}\cdot 10^{10} \cdot d^{26/\Psi_G}\leq  10^{10} \cdot d^{50/\Psi_G}:= C$. 
	\end{Proof}
	
	\section{Lower Bounds on Boosting and the Cover Times of the Time-Biased Walk}
	\label{Boost}
	
	The aim of this section is to prove \cref{nonregbound}, which gives an upper bound on how much the $\eps$-$\tbrw$ can ``boost'' the probability of any event over the corresponding probability under  the simple random walk. This result is then used to prove \cref{thm:coveringlower}, which gives a super-linear lower bound on the cover time of the $\eps$-$\tbrw$ on any $d$-regular graph whenever $\eps =o(1/\log^2 d)$.

	\subsection{Lower Bound on ``Boosting''} 
	
	We introduce some essential definition before stating \cref{nonregbound},  which gives an upper bound on how much the bias random walk can ``boost'' the probabilities of events over the SRW. 
	
	Fix a vertex $u$, a non-negative integer $t$ and a set $S$ of $t$-step trajectories. Write $p_{u,S}$ for the probability that running a $\srw$ starting from $u$ for $t$ steps results in a trajectory  in $S$. Let $q_{u,S}(\eps)$ be the corresponding probability under the law of the $\eps$-$\tbrw$, which depends on the particular strategy used. We prove the following result relating $q_{u,S}(\eps)$ to $p_{u,S}$; recall that $d_{\max}$ denotes the maximum degree of $G$.
	
	\begin{theorem}\label{nonregbound}
		Let $G$ be any graph, $u\in V$, $S$ be a set of trajectories of length $t>0$ starting from $u$, and $0\leq  \eps \leq  1$. Then, for any strategy taken by the $\eps$-$\tbrw$, we have
		\[
		q_{u,S}(\eps) \leq \bigl( 1+ \eps(\dmax-1) \bigr)^t \cdot p_{u,S} .
		\]
		Further, if $\eps \leq 1/\dmax^{2\eta}$ for any $ 0<\eta \leq 1$, then 
		\[
		q_{u,S}(\eps)
		\leq
		\exp\bigl( 4 t / \dmin^{\eta} \bigr) \cdot (p_{u,S})^{\eta/(1+\eta)}.
		\] 
	\end{theorem}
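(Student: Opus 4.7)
The first inequality is direct. For any trajectory $\gamma=(x_0,\dots,x_t)$ with $x_0=u$, the SRW probability factorises as $p(\gamma)=\prod_{i<t}1/d(x_i)$, while under any $\eps$-TBRW strategy the probability of $\gamma$ is $\prod_{i<t}[(1-\eps)/d(x_i)+\eps B_i(x_i,x_{i+1})]$. Since $B_i(x_i,x_{i+1})\le 1$, the per-step ratio is at most $(1-\eps)+\eps d(x_i)\le 1+\eps(\dmax-1)$, so $q(\gamma)\le(1+\eps(\dmax-1))^tp(\gamma)$ and summing over $\gamma\in S$ gives the first bound.

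For the sharper second bound I would follow the trajectory-tree scheme sketched in \cref{sec:outline}. Form the tree $\mathcal T$ of height $t$ rooted at $u$, in which each internal node $\mathbf x$ (corresponding to a partial trajectory ending at some $v(\mathbf x)\in V$) has exactly $d(v(\mathbf x))$ children, one per neighbour of $v(\mathbf x)$; so root-to-leaf paths enumerate $t$-step trajectories from $u$. Label each leaf $\mathbf x$ by $\mathbf{1}[\text{trajectory}(\mathbf x)\in S]$ and propagate values up under two parallel rules: $p(\mathbf x):=M_1(p(\text{children}))$ for the SRW, and $q(\mathbf x):=\operatorname{B}_{\eps,\mathbf b(\mathbf x)}(q(\text{children}))$ for the $\eps$-TBRW, where $\mathbf b(\mathbf x)$ is the (possibly adversarial, history-dependent) bias chosen at $\mathbf x$ by the strategy. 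Then $p(\text{root})=p_{u,S}$ and $q(\text{root})=q_{u,S}(\eps)$.

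The central ingredient is the key inequality: for $\eps\le d^{-2\eta}$, any probability vector $\mathbf b$ on $d$ entries, and any $\mathbf v\in[0,1]^d$,
\[
\operatorname{B}_{\eps,\mathbf b}(\mathbf v)\;\le\; e^{4d^{-\eta}}\,M_{(1+\eta)/\eta}(\mathbf v).
\]
To prove this I would split $\operatorname{B}_{\eps,\mathbf b}(\mathbf v)=(1-\eps)M_1(\mathbf v)+\eps\sum_ib_iv_i$, bound $\sum_ib_iv_i\le \max_iv_i\le d^{\eta/(1+\eta)}M_{(1+\eta)/\eta}(\mathbf v)$, and use $M_1\le M_{(1+\eta)/\eta}$ to obtain a prefactor $1+\eps(d^{\eta/(1+\eta)}-1)$. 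The hypothesis $\eps\le d^{-2\eta}$ then forces $\eps(d^{\eta/(1+\eta)}-1)\le d^{\eta/(1+\eta)-2\eta}\le d^{-\eta}$ using the algebraic fact $\eta/(1+\eta)-2\eta\le -\eta$ for $\eta\in(0,1]$, so the prefactor is at most $1+d^{-\eta}\le e^{4d^{-\eta}}$.

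Given this, I would close the proof by an induction from leaves up, maintaining at every node $\mathbf x$ at depth $\ell$ the bound $q(\mathbf x)\le \exp(4(t-\ell)/\dmin^\eta)\cdot p(\mathbf x)^{\eta/(1+\eta)}$. The base case $\ell=t$ is trivial since $p,q\in\{0,1\}$ and $p^{\eta/(1+\eta)}=p$ there. For the inductive step, plug the hypothesis into each argument of $\operatorname{B}_{\eps,\mathbf b(\mathbf x)}$, factor out the uniform exponential prefactor, apply the key inequality, and use the power-mean identity $M_{(1+\eta)/\eta}(v_i^{\eta/(1+\eta)})=M_1(v_i)^{\eta/(1+\eta)}=p(\mathbf x)^{\eta/(1+\eta)}$; bounding $d(v(\mathbf x))^{-\eta}\le \dmin^{-\eta}$ combines the new $e^{4d(v(\mathbf x))^{-\eta}}$ factor with the inductive exponential to reproduce the claim at $\mathbf x$. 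Taking $\ell=0$ gives the theorem. The main obstacle is the key inequality itself and, more subtly, selecting the power-mean exponent $(1+\eta)/\eta$ so that the above identity collapses the $\operatorname{B}$-value back to a simple power of $p(\mathbf x)$ matching exactly the exponent $\eta/(1+\eta)$ used in the potential — this rigid matching is what forces both the choice of exponent and the hypothesis $\eps\le d^{-2\eta}$.
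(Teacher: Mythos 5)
Your proposal is correct, and while it follows the paper's overall trajectory-tree architecture (which you took from the outline in \cref{sec:outline}), the execution differs from the paper's in three genuine ways. First, you prove the cruder bound $q_{u,S}(\eps)\le(1+\eps(\dmax-1))^t p_{u,S}$ by a direct per-trajectory likelihood-ratio comparison, whereas the paper derives it from the same potential-function machinery as the sharp bound (with a simpler potential); your route is more elementary and equally valid, since the per-step ratio $(1-\eps)+\eps d(x_i)B_i(x_i,x_{i+1})\le 1+\eps(\dmax-1)$ holds for every history-dependent strategy. Second, your proof of the key inequality $\operatorname{B}_{\eps,\mathbf b}(\mathbf v)\le e^{4d^{-\eta}}M_{(1+\eta)/\eta}(\mathbf v)$ replaces the paper's argument (H\"older's inequality \cref{holder} followed by majorization and Schur-convexity of the power mean, \cref{lem:powermeanschur}) with the split $(1-\eps)M_1+\eps\sum_i b_iv_i$, the bounds $\sum_ib_iv_i\le\max_iv_i\le d^{\eta/(1+\eta)}M_{(1+\eta)/\eta}(\mathbf v)$ and $M_1\le M_{(1+\eta)/\eta}$, and the exponent comparison $\eta/(1+\eta)-2\eta\le-\eta$; this checks out, avoids the appendix on Schur-convexity entirely, and loses nothing in the constants (both land at $e^{4/d^\eta}$). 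Third, you close with a pointwise backward induction on tree nodes, maintaining $q(\mathbf x)\le\exp(4(t-\ell)/\dmin^\eta)p(\mathbf x)^{\eta/(1+\eta)}$ and collapsing via the identity $M_{(1+\eta)/\eta}\bigl((p(\mathbf y)^{\eta/(1+\eta)})_{\mathbf y}\bigr)=M_1\bigl((p(\mathbf y))_{\mathbf y}\bigr)^{\eta/(1+\eta)}=p(\mathbf x)^{\eta/(1+\eta)}$, whereas the paper aggregates each generation into the potential $\Upsilon^{(i)}$ of \cref{Phi}, weighted by SRW path probabilities and per-vertex factors $\kappa(y_j)^s$, and shows monotonicity in $i$; the two are essentially equivalent in strength (raising your pointwise bound to the power $s=(1+\eta)/\eta$ and averaging recovers the paper's monotonicity step), but your version has lighter bookkeeping, at the cost of coarsening every node's degree to $\dmin$ immediately rather than carrying the path-dependent product $\prod_j\kappa(y_j)$. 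One small point to make explicit: the key inequality is applied at a node of degree $d(v(\mathbf x))$, so you need $\eps\le d(v(\mathbf x))^{-2\eta}$ there, which is exactly what the hypothesis $\eps\le 1/\dmax^{2\eta}$ guarantees.
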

	
	In order to prove this theorem we make use of the tree gadget introduced in \cite{POTC,ETB}. This is an encoding of trajectories of length at most $t$ from $u$ in a rooted graph $(G,u)$ by vertices of an arborescence $(\mathcal{T}_t,\mathbf{r})$, i.e., a tree with all edges oriented away from the root $\mathbf{r}$. Here we use bold characters to denote trajectories, and $\mathbf r$ will be the length-$0$ trajectory consisting of the single vertex $u$. The tree $\mathcal T_t$ consists of one node for each trajectory of length $i\leq t$ starting at $u$, and has an edge from $\mathbf{x}$ to $\mathbf{y}$ if $\mathbf{x}$ may be obtained from $\mathbf{y}$ by deleting the final vertex.
	
	We write $\Gamma^+(\mathbf{x})$ for the set of children of $\mathbf x$ in $\mathcal{T}_t$, and let $d^+(\mathbf{x}):=|\Gamma^+(\mathbf{x})|$. We denote the length of the trajectory $\mathbf x$ by $\abs{\mathbf{x}}$. Let $p_{\mathbf x,S}$ be the probability that, conditional on a partial trajectory $\mathbf x$, extending $\mathbf{x}$ to a trajectory of length $t$ under the law of the SRW results in an element of $S$. Let $q_{\mathbf x,S}(\eps)$ denote the analogous quantity for the $\eps$-$\tbrw$. Thus, $p_{u,S}$ and $q_{u,S}(\eps)$ correspond to the case $\mathbf{x}=u$. Additionally, let $W_u(k):=(X_i)_{i=0}^k$ be the trajectory of a simple random walk $X_i$ on $G$ up to time $k$, with $X_0=u$.

	For $r\in \mathbb R\setminus\{0\}$ and $d \in \mathbb{N}\setminus \{0\}$, the $r$-power mean $M_r$ of non-negative reals $v_1,\ldots,v_d$ is defined by \[
	M_r(v_1,\ldots,v_d):=\left(\frac{v_1^r+\cdots+v_d^r}{d}\right)^{1/r},
	\]
	and 
	\[M_{\infty}(v_1,\ldots,v_d)
	:=\max\{v_1,\ldots,v_d\}=\lim_{r\to \infty}M_r(v_1,\ldots,v_d).
	\]

	Recall that a strategy, applied to a specific time step, for the time biased random walk can be thought of as a function $\mathbf{b}$ from the history of the walk to probability distributions over the set of neighbours of the current vertex. The walk then uses this probability vector $\mathbf{b}$ if it gets to choose the next step. We introduce the biased operator $	\operatorname{B}_{\eps,\mathbf{b}}$ which, given a bias vector $\mathbf{b}$, determined by the strategy for the current trajectory, encodes one step of the process. That is, for $d\geq 1$, a vector $\mathbf{v}=(v_1, \dots, v_d)\in [0,\infty)^d$ and a probability vector $\mathbf{b}=(b_1,\dots, b_d)$, we let 
	\begin{equation*}
		\operatorname{B}_{\eps,\mathbf{b}}\left(\mathbf{v} \right)  := \sum_{i=1}^d \left(\frac{1 - \eps }{d} + \eps b_i\right)  \cdot  v_i .
	\end{equation*}

	Recall H\"older's inequality, which states that for any real $y_1,\ldots,y_d$ and $z_1,\ldots,z_d$, and for any $r,s\geq 1$ satisfying $1/r+1/s=1$, we have 
	\begin{equation}\label{holder} \sum_{i=1}^d y_i\cdot z_i\leq\Bigl(\sum_{i=1}^d y_i^r\Bigr)^{1/r}\cdot \Bigl(\sum_{i=1}^d z_i^s\Bigr)^{1/s}.\end{equation}

	The following lemma bounds the biased operator by the average times a factor depending on the length of the input vector and the bias. 
	\begin{lemma}\label{lem:conv}
		Let $d \in \mathbb{N} \setminus \{0\}$, $\eps\in [0,1] $, $\mathbf{v}\in [0,\infty)^d$, and $\mathbf{b} \in [0,1]^d$ be a probability vector. Then,  
		\[ \operatorname{B}_{\eps,\mathbf{b}}\left(\mathbf{v}  \right)\leq  \left(1 +   \eps (d-1)\right) \cdot M_{1}(\mathbf{v})  . \]
		Further, if $\eps \leq 1/d^{2\eta}$ for any $0 < \eta \leq 1$, then
		\[ \operatorname{B}_{\eps,\mathbf{b}}\left(\mathbf{v}  \right)\leq \exp\bigl(4 / d^{\eta}\bigr) \cdot  M_{(1+\eta)/\eta}(\mathbf{v}). \] 
	\end{lemma}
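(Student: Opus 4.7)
The plan is to decompose the biased operator according to its two constituent pieces:
\[
\operatorname{B}_{\eps,\mathbf{b}}(\mathbf{v}) \;=\; (1-\eps)\,M_1(\mathbf{v}) \;+\; \eps\sum_{i=1}^d b_i v_i ,
\]
and to bound each summand separately. The first term is already the lowest-order power mean of $\mathbf{v}$, and the second is a weighted sum against the strategy vector $\mathbf{b}$, which lies on the probability simplex.

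For the first (easier) inequality, I would simply bound $\sum_i b_i v_i \leq \max_i v_i \leq \sum_i v_i = d\,M_1(\mathbf{v})$, using $v_i\ge 0$. This immediately yields the coefficient $(1-\eps) + \eps d = 1 + \eps(d-1)$ in front of $M_1(\mathbf v)$, proving the first claim.

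For the sharper second inequality, the key tool is H\"older's inequality~\eqref{holder} applied with the conjugate exponents $1+\eta$ (on $b_i$) and $(1+\eta)/\eta$ (on $v_i$), precisely the pairing that makes $M_{(1+\eta)/\eta}$ appear on the right-hand side. This gives
\[
\sum_{i=1}^d b_i v_i \;\leq\; \Big(\sum_i b_i^{1+\eta}\Big)^{1/(1+\eta)}\Big(\sum_i v_i^{(1+\eta)/\eta}\Big)^{\eta/(1+\eta)} .
\]
The crucial observation is that $b_i\in[0,1]$ together with $1+\eta\ge 1$ implies $b_i^{1+\eta}\leq b_i$, so $\sum_i b_i^{1+\eta}\leq \sum_i b_i = 1$; the first factor is therefore at most $1$. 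The second factor equals $d^{\eta/(1+\eta)}\,M_{(1+\eta)/\eta}(\mathbf v)$. Combined with the monotonicity of power means, $M_1(\mathbf v)\leq M_{(1+\eta)/\eta}(\mathbf v)$, this yields
\[
\operatorname{B}_{\eps,\mathbf b}(\mathbf v) \;\leq\; \bigl((1-\eps) + \eps\, d^{\eta/(1+\eta)}\bigr)\, M_{(1+\eta)/\eta}(\mathbf v) .
\]

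It remains to simplify the coefficient using $\eps \leq d^{-2\eta}$. Writing it as $1 + \eps(d^{\eta/(1+\eta)}-1) \leq 1 + d^{-2\eta}\cdot d^{\eta/(1+\eta)} = 1 + d^{-\eta(1+2\eta)/(1+\eta)}$, and noting that $(1+2\eta)/(1+\eta) \geq 1$ for $\eta \geq 0$, the exponent is at most $-\eta$, giving $1 + d^{-\eta} \leq \exp(d^{-\eta}) \leq \exp(4/d^\eta)$, as required. The only real subtlety is choosing the right conjugate exponents so that the uniform bound $b_i\leq 1$ can be leveraged to make the $b$-factor in H\"older's inequality at most one; after that, everything reduces to elementary algebra.
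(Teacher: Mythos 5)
Your proof is correct, but it takes a genuinely different route from the paper's. You split $\operatorname{B}_{\eps,\mathbf{b}}(\mathbf{v})=(1-\eps)M_1(\mathbf{v})+\eps\sum_i b_i v_i$ and apply H\"older's inequality \eqref{holder} only to the bias term, with exponents $1+\eta$ and $(1+\eta)/\eta$, exploiting the elementary fact that $\sum_i b_i^{1+\eta}\le\sum_i b_i=1$ because $b_i\in[0,1]$; you then merge the two terms via the classical power-mean monotonicity $M_1(\mathbf{v})\le M_{(1+\eta)/\eta}(\mathbf{v})$ (standard, though not stated in the paper) and finish with $\eps\le d^{-2\eta}$ and $1+x\le e^x$. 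The paper instead applies H\"older to the whole sum, writing the coefficients $(1-\eps+d\eps b_i)/d$ as a single vector, so the first H\"older factor becomes an $r$-power mean of that coefficient vector; it is then maximized over all strategies $\mathbf{b}$ via majorization by the extremal vector $(1-\eps+d\eps,1-\eps,\dots,1-\eps)$ and Schur-convexity of $M_r$ (\cref{lem:powermeanschur}, proved in the appendix), followed by the same kind of elementary estimates. Your decomposition buys a shorter argument that avoids the Schur-convexity/majorization machinery entirely and in fact yields the slightly sharper constant $1+d^{-\eta}\le e^{d^{-\eta}}$ in place of $e^{4d^{-\eta}}$; the paper's single-application viewpoint treats the biased kernel as one perturbed averaging operator and quantifies directly how far its coefficient vector can be from uniform, which is the perspective reused in the potential-function argument of \cref{nonregbound}, but for this lemma your route is the more elementary of the two.
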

	\begin{Proof} Let $\mathbf{v}= (v_1,\dots, v_d)$. The first claim follows by the definition of  $\operatorname{B}_{\eps,\mathbf{b}}$, as
		\begin{align*}		\operatorname{B}_{\eps,\mathbf{b}}\left(\mathbf{v}\right)  &= \sum_{i=1}^d \left(\frac{1 - \eps }{d} + \eps b_i\right)  \cdot v_i \leq \left(1 - \eps  + \eps d\right) \cdot \sum_{i=1}^d   \frac{v_i}{d}=  \left(1 +   \eps (d-1)\right) \cdot M_1(\mathbf{v}). 
		\end{align*}For the second claim, for any $\mathbf{b}$ and any $1/r+1/s=1$ applying H\"older's inequality \eqref{holder} gives
		\begin{align}\operatorname{B}_{\eps,\mathbf{b}}\left(\mathbf{v}  \right)&=    \sum_{i=1}^d  \frac{1-\eps + d\eps b_i }{d^{1-1/s} } \cdot \frac{v_{i}}{d^{1/s} } \notag \\
			&\leq \left(\sum_{i=1}^d  \left(\frac{1-\eps + d\eps b_i }{d^{1-1/s} }\right)^r \right)^{1/r} \cdot M_{s}(\mathbf{v} )\notag \\
			&= \left(\frac{1}{d}\sum_{i=1}^d  \left(1-\eps + d\eps b_i\right)^r \right)^{1/r} \cdot M_{s}(\mathbf{v}). \label{eq:lastineq}
		\end{align} The first term in \eqref{eq:lastineq} is $M_r(\mathbf{u})$, where the vector $\mathbf{u}=(1-\eps +d\eps b_i)_{i=1}^d$ depends on $\mathbf{b}$. Observe that the $d$-dimensional vector $(1-\eps +d\eps,1-\eps,\dots, 1-\eps)$  majorizes $\mathbf{u}$ for any   choice of $\mathbf{b}$. Thus, since the $r$-power mean is Schur-convex by Lemma \ref{lem:powermeanschur}, we have  \begin{align}\label{eq:holdertoq}
			\operatorname{B}_{\eps,\mathbf{b}}\left(\mathbf{v}   \right) &\leq  \left(\frac{d-1}{d}\left(1-\eps\right)^r + \frac{1}{d}\left(1-\eps + d\eps  \right)^r \right)^{1/r} \cdot M_{s}(\mathbf{v})\notag \\
			&\leq  \left(1+ \frac{\left(1  + d\eps  \right)^r}{d} \right)^{1/r} \cdot M_{s}(\mathbf{v}).  \end{align} 
		Let $\eta \in (0,1]$ and $\eps \leq d^{-2\eta}$. Set $r:=1+\eta$. The first term of \eqref{eq:holdertoq} then satisfies
		\[ 
		\left(1+ \frac{\left(1  + d\eps  \right)^r}{d} \right)^{1/r} \leq 	\left(1+ \frac{\left(1  + d^{1-2\eta}  \right)^r}{d} \right)^{1/r}
		\le
		\left(1+ \frac{2^r  d^{(1-2\eta)r}}{d} \right)^{1/r}.\]
		Using the inequality $(1-2\eta)r = 1 -\eta -2\eta^2  \leq 1 -\eta  $ in the above gives   
		\[ \left(1+ \frac{\left(1  + d\eps  \right)^r}{d} \right)^{1/r}  \le  \left(1+ \frac{2^{1+\eta}  d^{1-\eta}}{d} \right)^{1/r}\leq 
		1+ \frac{2^{1+\eta}  d^{1-\eta}}{d} \le
		1+ 4d^{-\eta}\le
		e^{4d^{-\eta}}.
		\]
		Thus, the result holds by \eqref{eq:holdertoq} since $s=(1-1/r)^{-1} = r/(r-1)   = (1+\eta)/\eta$. 
	\end{Proof}
	
	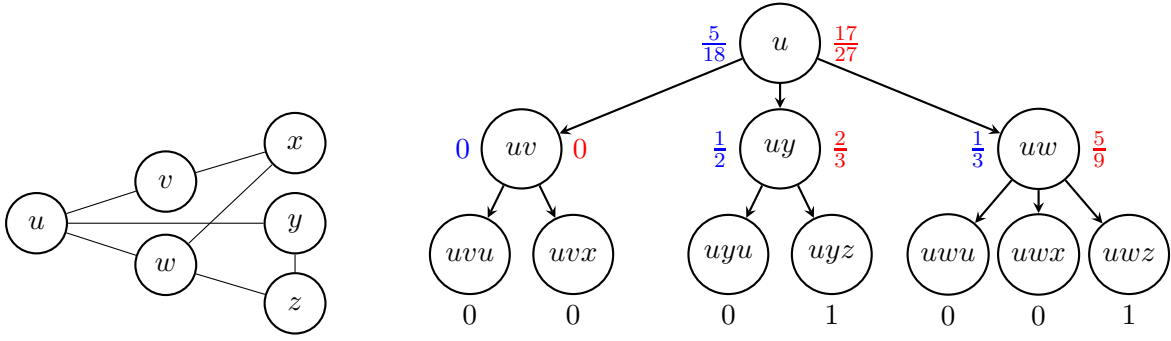
\begin{figure} 
		\begin{subfigure}{.35\textwidth}
			\begin{tikzpicture}[xscale=.85,yscale=0.85,knoten/.style={thick,circle,draw=black,minimum size=.8cm,fill=white},wknoten/.style={thick,circle,draw=black,minimum size=.6cm,fill=white},edge/.style={black},dedge/.style={thick,black,-stealth}]
				\node[knoten] (u) at (2,2) {$u$};
				\node[knoten] (v) at (4,2.65) {$v$};
				\node[knoten] (w) at (4,1.35) {$w$};
				\node[knoten] (x) at (6,3.25) {$x$};
				\node[knoten] (y) at (6,2) {$y$};
				\node[knoten] (z) at (6,0.75) {$z$};
				\draw[edge] (u) to (v);
				\draw[edge] (u) to (y);
				\draw[edge] (w) to (x);
				\draw[edge] (w) to (z);
				\draw[edge] (v) to (x);
				\draw[edge] (z) to (y);
				\draw[edge] (u) to (w);
			\end{tikzpicture}
		\end{subfigure}%
		\begin{subfigure}{.65\textwidth}
			\begin{tikzpicture}[xscale=0.85,yscale=0.7,knoten/.style={thick,circle,draw=black,minimum size=1.05cm,fill=white},wknoten/.style={thick,circle,draw=black,minimum size=1.05cm,fill=white},edge/.style={black},dedge/.style={thick,black,-stealth}]
				\node[knoten] (u1) at (2,6) [label=left:$\textcolor{blue}{\frac{5}{18}}$,label=right:$\textcolor{red}{\frac{17}{27}}$]{$\phantom{x}u\phantom{x}$};
				\node[knoten] (v2) at (-2,4)[label=left:$\textcolor{blue}{0}$,label=right:$\textcolor{red}{0}$] {$\;uv\;$};
				\node[knoten] (y2) at (2,4)[label=left:$\textcolor{blue}{\frac{1}{2}}$,label=right:$\textcolor{red}{\frac{2}{3}}$] {$\;uy\;$};
				\node[knoten] (w2) at (6,4)[label=left:$\textcolor{blue}{\frac{1}{3}}$,label=right:$\textcolor{red}{\frac{5}{9}}$] {$\;uw\;$};
				
				\node[wknoten] (u3) at (-2.8,2)[label=below:$0$] {$uvu$};
				\node[wknoten] (x3) at (-1.2,2)[label=below:$0$] {$uvx$};
				
				\node[wknoten] (u31) at (1.2,2)[label=below:$0$] {$uyu$};
				\node[wknoten] (z31) at (2.8,2)[label=below:$1$] {$uyz$};
				
				\node[wknoten] (u32) at (4.6,2) [label=below:$0$]{$uwu$};
				\node[wknoten] (x32) at (6,2) [label=below:$0$]{$uwx$};
				\node[wknoten] (z32) at (7.4,2) [label=below:$1$]{$uwz$};

				\draw[dedge] (u1) to (v2);
				\draw[dedge] (u1) to (y2);
				\draw[dedge] (u1) to (w2);
				
				\draw[dedge] (v2) to (u3);
				\draw[dedge] (v2) to (x3); 
				
				\draw[dedge] (y2) to (u31);
				\draw[dedge] (y2) to (z31);
				
				\draw[dedge] (w2) to (u32);
				\draw[dedge] (w2) to (x32);
				\draw[dedge] (w2) to (z32);
				
			\end{tikzpicture}
		\end{subfigure}
		\caption{Illustration of a (non-lazy) walk on a non-regular graph (shown on the lest) starting from $u$ with the objective of having visited $z$ by time $t=2$, this corresponds to a set of trajectories $S=\{uyz,uwz\}$. The conditional probabilities of achieving this, from the given node/trajectory, are given in blue (left) for the SRW, and in red (right) for the $\frac{1}{3}$-TBRW. The strategy used by the $\frac{1}{3}$-TBRW in this example is one that biases towards a vertex maximising the chance of reaching the target.}\label{fig:trajtree}
	\end{figure}
	With this we are now ready to prove the main result of this section. 
	\begin{Proof}[Proof of Theorem \ref{nonregbound}]For convenience we suppress the notational dependence of $q_{\mathbf{x},S}(\eps)$ on $\eps$. Recall that a strategy for the $\eps$-$\tbrw$ constitutes for each trajectory $\mathbf{x}$, a probability distribution $\mathbf{b}(\mathbf{x})=(b_\mathbf{y})_{\mathbf{y}\in \Gamma^+(\mathbf{x})}$ over the neighbours of $\mathbf{x}$ in $\mathcal{T}_t$. Given a strategy for the $\eps$-$\tbrw$, we assign to each node $\mathbf{x}$ of the tree gadget $\ct_t$ the value $q_{\mathbf{x},S} $ under $\mathbf{b}$. This is well-defined because  the values $q_{\mathbf{x},S}$ can be computed ``bottom up'' starting at the leaves, where if $\mathbf{x} \in V(\ct_t)$ is a leaf then $q_{\mathbf{x},S} $ is $1$ if $\mathbf x\in S$ and $0$ otherwise.

		Thus, if $\mathbf{x}$ is not a leaf, then the probability $q_{\mathbf{x},S}$ is the biased operator applied to the child $\mathbf{y}\in \Gamma^+(\mathbf{x})$ of $\mathbf{x}$, that is  \begin{equation}\label{qqqq}q_{\mathbf{x},S} =\operatorname{B}_{\eps,\mathbf{b}(\mathbf{x})}\left( \left(q_{\mathbf{y},S}\right)_{\mathbf{y}\in \Gamma^+(\mathbf{x})} \right).\end{equation} 
		
		Using \eqref{qqqq} one can calculate the conditional success probabilities of any strategy using the trajectory tree in a  ``bottom up'' fashion starting at the leaves, see \cref{fig:trajtree} for an example. 
		
		In general if we are given a trajectory $\mathbf{x}$ of length $i$, then we label vertices in $\mathbf{x}$ as $x_0,\dots,x_i$, thus $d^+(\mathbf{x})=d(x_i)$. In the proof all trajectories start from $u$, that is $x_0=u$.
		
		We first establish the second claim of this theorem, that is 
		\begin{equation} \label{eq:secclaim}
			q_{u,S}(\eps)
			\leq
			\exp\bigl( 4 t / \dmin^{\eta} \bigr) \cdot (p_{u,S})^{\eta/(1+\eta)}
			\Qwith
			\eps \leq  1/\dmax^{2\eta}
			\Qand
			0 < \eta \leq 1,
		\end{equation}
		since this is the more complicated one. We then explain how to adapt the proof to prove~the~first. 
		
		In order to prove \eqref{eq:secclaim}, we define the potential function $\Upsilon^{(i)}$ on the $i$-th generation of the  tree gadget $\ct$. Set $s:= (1+\eta)/\eta$, $\Upsilon^{(0)} := (q_{u,S})^s$, and for $i\geq 1$, define  
		\begin{equation}\label{Phi}\Upsilon^{(i)} :=  \sum\limits_{\mathbf y : |\mathbf{y}|=i} \prod_{j=0}^{i-1}\kappa(y_j)^s \cdot  (q_{\mathbf{y},S})^s\cdot \Pr{W_u(i) = \mathbf{y}},   \end{equation} where  $\kappa(y_j):=\exp(4 / d(y_j)^{\eta})$. For any trajectories $\mathbf{x},\mathbf{y}$ with $\mathbf{y} \in \Gamma^+(\mathbf{x})$, we have  
		\[\Pr{W_u(|\mathbf{y}|) = \mathbf{y}} = \frac{\Pr{W_u(|\mathbf{x}|) = \mathbf{x}}}{d^+(\mathbf{x})} ,\] thus, since each $\mathbf{y}$ with $|\mathbf{y}|=i$ has exactly one parent $\mathbf{x}$ with $|\mathbf{x}|=i-1$, we can write 
		\begin{equation}\label{PPhi}\Upsilon^{(i)} = \sum\limits_{\mathbf x :|\mathbf{x}|=i-1}\prod_{j=0}^{i-2}\kappa(x_j)^s\cdot  \sum_{\mathbf{y} \in \Gamma^+(\mathbf{x})}\kappa(x_{i-1})^s \cdot (q_{\mathbf{y},S})^s \cdot \frac{\Pr{W_u(i-1) = \mathbf{x}}}{d^+(\mathbf{x}) }.\end{equation} 
		We now show that $\Upsilon^{(i)} $ is non-decreasing in $i$ for $0 \leq i\leq t$. By combining \eqref{Phi} and \eqref{PPhi}, and defining an empty product to be $1$, we have for any $1 \leq i \leq t$,
		
		\begin{align}\label{eq:difference}
			\lefteqn{ \Upsilon^{(i-1)}-\Upsilon^{(i)}} \notag \\  &= \sum\limits_{\mathbf x: |\mathbf{x}|=i-1} \prod_{j=0}^{i-2}\kappa(x_j)^s  \left((q_{\mathbf{x},S})^s - \kappa(x_{i-1})^s \sum_{\mathbf{y} \in \Gamma^+(\mathbf{x})}\frac{(q_{\mathbf{y},S})^s}{d^+(\mathbf{x})} \right) \Pr{W_u(i-1) = \mathbf{x}}.
		\end{align}
		However, since $\mathbf{x}$ is not a leaf, by \eqref{qqqq} and the second claim of Lemma \ref{lem:conv} we obtain   \begin{align}\label{eq:drop} (q_{\mathbf{x},S})^s &=  \operatorname{B}_{\eps, \mathbf{b}(\mathbf{x})}\Big(\left( q_{\mathbf{y},S}\right)_{\mathbf{y} \in \Gamma^+(\mathbf{x})} \Big)^s \notag \\  
			&\leq 
			\left(
			\exp\bigl(4 / d^+(\mathbf{x})^{\eta} \bigr)
			\cdot
			M_{s}\bigl( ( q_{\mathbf{y},S})_{\mathbf{y} \in \Gamma^+(\mathbf{x})} \bigr)
			\right)^s \notag \\
			&= \kappa(x_{i-1})^s \cdot \sum_{\mathbf{y}\in \Gamma^+(\mathbf{x})}\frac{(q_{\mathbf{y},S})^s}{d^+(\mathbf{x})}. \end{align}   	Thus, by \eqref{eq:difference} and the above, we see that $\Upsilon^{(i)} $ is non-decreasing in $i$ for $1 \leq i\leq t$. 
		Observe that, for any $\mathbf{y}$ with $|\mathbf{y}|=t$ and any integer $0\leq j\leq t-1$ we have 
		\[\kappa(y_j)=\exp\bigl( 4 / d(y_j)^{\eta} \bigr) \leq  \exp\bigl( 4 / (\dmin)^{\eta} \bigr). \] 
		Additionally, if $|\mathbf{y}|=t$ then $q_{\mathbf{y},S}=1 $ if $\mathbf{y} \in S$ and $0$ otherwise. Thus,  
		\begin{align*}\Upsilon^{(t)} &= \sum_{\mathbf{y} \colon |\mathbf{y}|=t} \prod_{j=0}^{t-1}\kappa(y_j)^s\cdot (q_{\mathbf{y},S})^s\cdot \Pr{W_u(t) = \mathbf{y}}\\
			&\leq \exp\bigl( 4 s t / \dmin^{\eta} \bigr) \cdot  \sum_{\mathbf{y} \colon |\mathbf{y}|=t} (q_{\mathbf{y},S})^s\cdot \Pr{W_u(t) = \mathbf{y}}\\
			&\leq \exp\bigl( 4 s t / \dmin^{\eta} \bigr) \cdot  \sum_{\mathbf{y} \colon |\mathbf{y}|=t} \mathbf{1}_{\mathbf{x}\in S}\cdot \Pr{W_u(t) = \mathbf{y}}\\ 
			&= \exp\bigl( 4 s t / \dmin^{\eta} \bigr) \cdot p_{u,S}. \end{align*} Finally, since  $\Upsilon^{(0)} = (q_{u,S})^s$ and $\Upsilon^{(i)}$ is non-decreasing in $i$, \[(q_{u,S})^s = \Upsilon^{(0)}\leq \Upsilon^{(t)} = \exp\bigl( 4 s t / \dmin^{\eta} \bigr) \cdot p_{u,S},  \] and taking $s$-th roots of both sides gives the second claim as $s= (1+\eta)/\eta$. 
		
		The proof of the first claim follows the exact same strategy, we just use the simpler potential 
		\begin{equation*}\Upsilon^{(i)}=  \sum\limits_{|\mathbf{y}|=i} \kappa^i \cdot  q_{\mathbf{y},S}\cdot \Pr{W_u(i) = \mathbf{y}},   \end{equation*} where $\kappa:= 1+\eps(\dmax -1)$, and we apply the first claim of Lemma \ref{lem:conv} to establish the potential drop corresponding to \eqref{eq:drop}, as opposed to the second. 
	\end{Proof}

	\subsection{Lower Bound on Cover Time}
	
	We close the paper by deriving \cref{thm:coveringlower}, which we restate for the readers' convenience.

	\covlower*
	
	In order to prove \cref{thm:coveringlower} we will need the following theorem which shows that linear cover time for the simple random walk on any graph is exponentially unlikely. This generalized earlier results for bounded-degree graphs \cite{LinCovNo} and trees \cite{Yehudayoff12}.
	
	\begin{theorem}[{\cite[Theorem 1.1]{dubroff2021linear}}]\label{thm:dubkahn}
		For any constant $C>0$, there
		exists a constant $\alpha:=\alpha(C) > 0$
		such that, for any $n$-vertex graph $G$, if $(X_i)_{i\geq 0}$ is the simple random walk on $G$ with any starting distribution $\mu$, then
		\[ \Pr{\{X_0,X_1,\dots , X_{Cn}\} = V \, \mid\, X_0\sim \mu  } < e^{-\alpha \cdot n}.\]
	\end{theorem}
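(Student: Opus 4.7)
My plan is to bound the covering probability by analyzing the random vector of local times $L_{Cn}(v) := \abs{\{ s \leq Cn : X_s = v\}}$. Since $\sum_v L_{Cn}(v) = Cn + 1$ and the walk covers if and only if $\min_v L_{Cn}(v) \geq 1$, the question reduces to a lower-tail estimate on the minimum local time. The starting point is Feige's lower bound $\mathbb{E}[\tau_{\mathsf{cov}}] = \Omega(n \log n)$ for SRW on any $n$-vertex graph, so $Cn$ sits well below the mean and the task is to convert that gap into an exponential tail bound, uniformly over the starting distribution $\mu$.

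The first step is a per-vertex miss estimate. For each vertex $v$ of at-most-average degree, $\mathbb{E}_\mu[L_{Cn}(v)] \leq Cn\pi(v) + o(n)$ (after an initial mixing correction), which is $O(C)$ for such $v$, and a second-moment / Poisson-approximation computation yields $\mathbb{P}_\mu(L_{Cn}(v) = 0) \geq \delta(C) > 0$. Since by Markov at least half of the vertices have degree at most $2\bar d$, this produces a set $S \subseteq V$ with $\abs S = \Omega(n)$ of candidate ``misses,'' each with individual miss probability bounded below by a constant depending only on $C$.

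The second step is to upgrade the constant per-vertex miss probability to an exponential lower bound on the probability that some vertex is missed. The natural route is a correlation / independence argument across the missing events $\{v \notin \{X_0,\dots,X_{Cn}\}\}_{v \in S}$. One approach is to partition $[0, Cn]$ into macro-blocks of sublinear length, use the mixing of the walk (or a regeneration argument at a fixed anchor vertex) to argue approximate independence across blocks, and then apply a Chernoff-type bound to the empirical fraction of missed vertices in $S$. An alternative is an FKG / Harris-type argument exploiting that confining the walk's trajectory to a small subgraph is a positively correlated event across the targets in $S$, so the joint miss probability $\mathbb{P}(\bigcap_{v \in S'} \{v \text{ missed}\})$ for a random subset $S' \subseteq S$ decays geometrically in $\abs{S'}$.

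The main obstacle is making this argument work uniformly across all graphs $G$ and starting distributions $\mu$. Regular expanders reduce cleanly to coupon-collector via the Poisson approximation on $n$ bins and $Cn$ balls, giving the bound $(1 - e^{-C})^n = e^{-\Omega(n)}$ directly; slow-mixing graphs such as paths and cycles reduce to large-deviation bounds on the walk's displacement, which is exponentially concentrated away from $\Omega(n)$ displacement in $Cn$ steps. The hard case is graphs that combine these regimes, where the walk mixes rapidly within dense clusters but rarely crosses bottlenecks between them. I would expect Dubroff and Kahn's proof to rely on a refined structural decomposition of $G$ (e.g., a multiscale conductance partition or a hypergraph-container-style combinatorial argument bounding the number of covering trajectories) together with a unified potential function tracking both the visited set and the walk's position; reconstructing this decomposition and proving the associated concentration is the part I expect to be genuinely difficult, and also the step most sensitive to allowing an arbitrary initial distribution $\mu$ rather than the stationary one.
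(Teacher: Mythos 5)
This statement is not proved in the paper at all: it is imported verbatim from Dubroff and Kahn \cite[Theorem 1.1]{dubroff2021linear} (resolving a 2009 conjecture of Benjamini), so there is no in-paper argument to compare yours against, and your proposal must be judged as an attempted reconstruction of a genuinely non-trivial external result. As such it is an outline rather than a proof, and even your Step 1 needs care: the expansion $\Exu{\mu}{L_{Cn}(v)} \le Cn\pi(v)+o(n)$ presupposes mixing within $o(n)$ steps, which fails on paths, dumbbells and other bottlenecked graphs, and converting a bounded mean local time into $\Pr{L_{Cn}(v)=0}\ge\delta(C)$ requires a \emph{lower} bound on the expected number of returns given a first visit (a Green's-function estimate), not merely a second-moment computation.

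The decisive gap is Step 2, where you need an upper bound on $\Pr{\bigcap_{v}\{v\text{ visited}\}}$ and neither proposed route delivers one. The block-decomposition route needs approximate independence across blocks of length $o(n)$, which is exactly what fails on slowly mixing graphs; you correctly observe that expanders and paths are each fine in isolation, but the graphs defeating both tools simultaneously are precisely the content of the theorem. The FKG route points the wrong way: positive association of the miss events yields $\Pr{\bigcap_{v\in S'}\{v\text{ missed}\}} \ge \prod_{v\in S'}\Pr{v\text{ missed}}$, a lower bound on a quantity that in any case does not control the covering probability; what you would actually need is (approximate) \emph{negative} association of the covering events, which is not available and is essentially equivalent to the statement being proved. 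Since you explicitly defer the hard case to an unspecified structural decomposition, the proposal does not constitute a proof; for the purposes of this paper the correct move is simply to cite \cite{dubroff2021linear}, as the authors do.
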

	
	We can now establish  \cref{thm:coveringlower} using  \cref{nonregbound,thm:dubkahn}.

	\begin{Proof}[Proof of \cref{thm:coveringlower}] Note that since a $1$-regular graph is not connected, we can assume that $d\geq 2$. Let $S$ be all trajectories of length $t:= 3Cn$   which cover the graph, where $C \geq 1$ is a constant. Thus, if the trajectory of any walk of length $t$ is not in $S$, then there is at least one vertex that has not been visited.  
		
		Now, by \cref{thm:dubkahn} there exists a constant $\alpha > 0$ such that \begin{equation}\label{eq:bddonSRW} p_{u,S}\leq e^{-\alpha \cdot n}.
		\end{equation} 
		Note that $\alpha$ depends only on $C$, and we may assume in the remainder of the proof that $\alpha \leq 1$. We will proceed by a case distinction depending on whether $d$ is small or large. As the threshold, for $\alpha:=\alpha(C)$ set \begin{equation} 
			\Delta := \Delta(C)=   \exp\Big(\exp\Big(\frac{24C+2}{\alpha}\Big)\Big) . \end{equation} 
		
		\textit{Case 1.} Suppose first that $2 \leq d \leq \Delta$. By the first claim in Theorem \ref{nonregbound}, for any $  \eps\in [0,1]$ and  any strategy taken by the $\eps$-$\tbrw$, 
		\[q_{u,S}(\eps) \leq (1+\eps(d-1))^t\cdot p_{u,S} \leq e^{\eps\cdot d\cdot t}\cdot p_{u,S} . \] 
		It follows that if we take $c_1:=\frac{\alpha}{6C}$, then for any $\eps \leq c_1/d$ we have 
		\begin{equation}\label{eq:smalldprob} q_{u,S}(\eps) \leq e^{\eps\cdot d\cdot t}\cdot p_{u,S} \leq  e^{\frac{\alpha}{6Cd} \cdot d\cdot 3Cn  } \cdot  e^{-\alpha \cdot n} = e^{-\alpha \cdot n / 2}.   \end{equation}   	 
		
		\textit{Case 2.} Suppose now that $d > \Delta$. To begin, as $d\geq \Delta \geq 3$, we  set \[ \eta := \frac{\log\log d}{\log d} \in
		(0,1 ],\]  then we have $d^{-2\eta} = d^{-\frac{2\log\log d}{\log d}} = (\log d )^{-2} $ and $\frac{\eta }{1+\eta} \geq  \frac{\log\log d}{2\log d}$. Thus, by \eqref{eq:bddonSRW} and applying the second claim in Theorem \ref{nonregbound}, for any strategy taken by the $\eps$-$\tbrw$ with $\eps=d^{-2\eta}$, we have
		\begin{align} \label{eq:largedprob}
			q_{u,S}(\eps) 
			&
			\leq
			\exp\bigl( 4 t d^{-\eta} \bigr) \cdot (p_{u,S})^{\frac{\eta }{1+\eta}}\notag
			\\&
			\leq
			\exp\bigl( 4 \cdot 3 C n \cdot (\log d)^{-1} \bigr) \cdot \exp(-\alpha n)^{\frac{\log\log d}{2\log d} }\notag
			\\&
			=
			\exp\left(\left(\frac{12C}{ \log d }-\alpha\cdot \frac{\log\log d}{2\log d}\right)\cdot n\right)\notag
			\\&
			\leq
			\exp\left(-  \frac{n}{\log d }\right), 
		\end{align} 
		where the final inequality holds since $d> \Delta :=\exp(\exp(\frac{24C+2}{\alpha}))$. This concludes the case distinction.
		
		Combining both cases, i.e., \eqref{eq:smalldprob} and \eqref{eq:largedprob}, it follows that if $d \leq \Delta$ and $\eps\leq c_1/ d$, or $d > \Delta$ and $\eps \leq d^{-2\eta}= (\log d)^{-2}$, that $q_{u,S}(\eps) <1/2$, for suitably large $n$. Consequently, 
		\begin{equation}\label{eq:covlower}\tetb(G) \geq 3Cn\cdot (1- q_{u,S}(\eps)  ) >Cn. 
		\end{equation}
		
		It only remains to verify that in both cases
		$\epsilon \leq c/\log^2 d$. First set $c:= c_1/ (3\Delta)= \alpha/(18 \Delta C)$.
		Note that $c\leq 1$ since $\alpha \leq 1$, $C \geq 1$ and $\Delta > 1$. Following the theorem statement, let us take any $\eps$ satisfying $\eps \leq c  (\log d)^{-2}$. Then, for the case $2\leq d \leq \Delta$ we have $\eps \leq c  (\log d)^{-2} \leq 3c = c_1/\Delta \leq c_1/d$, thus \eqref{eq:covlower} holds in this case. For the remaining case $d>\Delta$, since we have $c\leq 1$, we have $\eps \leq    (\log d)^{-2}$,  thus \eqref{eq:covlower} holds in this case as well. 
	\end{Proof}

	\section{Conclusions and Open Problems}\label{sec:conc}
	In \cref{res:robust} we gave a lower bound on the edge conductance (and spectral gap) of a perturbed chain which is decreasing in $d$.
	
	\begin{pbl}Can \cref{res:robust} be improved so that the lower bound is independent of $d$?
	\end{pbl}
	
	Our second open problem is to enlarge the class of graphs for which a linear cover time is known for the $\eps$-$\tbrw$. 
	Recall that $\tetb(G)$ implicitly minimizes over all $\eps$-biased strategies.
	
	\begin{pbl}
		Is it true that for any fixed $\eps>0$ and any bounded-degree graph $G$ we have $\tetb(G) = \Theta(n)$?
	\end{pbl}
	There are of course many candidate graph and bias pairs for which the question above makes sense, however to us bounded degree with a fixed bias is a very interesting combination and thus a natural place to start. We have established this, with some effort, for bounded-degree regular expander graphs. However, it is unclear whether this can be extended to \emph{any} bounded-degree graph. Indeed, a weaker bound of $o(n \log n)$, or even $O(n \operatorname{polylog} n)$, is not known and would already be progress.
	
	More generally, \cref{thm:covercheap} shows that the cover time of an $\eps$-TBRW on a $d$-regular graph $G$ satisfies 
	\[\tetb(G)=O\left(d^{100/\gamma}\cdot n \cdot \min\{\eps^{-1}, \; \log n \} \right)   .\] 
	We can compare this with the previously best-known bound from \cite{ETB},
	\[	\tetb(G)=O\left(\frac{\thit}{\eps}\cdot \log\left( \frac{ d\cdot \log  n}{\gamma} \right)\right),\]
	where $\thit$ is the maximum hitting time of the SRW, which is $O(n)$ in expanders.
	The first bound has significantly worse dependence on the spectral gap $\gamma$ and degree $d$. However, the second is always $\Omega(n\log\log n)$
	since $\thit \ge n/4$. This leads us to the following open problem.
	
	\begin{pbl}
		What is the best bound on the cover time of a graph by the $\eps$-$\tbrw$ in terms of $n, \gamma$ and $d$? 
	\end{pbl}
	
	The spectral gap is a natural choice of parameter as it is so closely related to the mixing time (and thus cover time) of the simple random walk, however it could be the case that this is not the right parameter when considering the $\eps$-$\tbrw$. This leads us to the next open problem. 
	\begin{pbl}
		Is there a graph parameter which more crisply characterises the cover time of the $\eps$-$\tbrw$? 
	\end{pbl}
	\cref{thm:coveringlower} shows that for every $d$-regular graph a bias of  $\eps=\Omega(1/\log^2 d)$ is required to cover it in linear time with an $\eps$-$\tbrw$. It is likely that the square root is an artifact of our proof, and that the dependence on $d$ can be improved%
	---although, it is already pretty good.
	However, it is not at all clear whether the dependence on $d$ can be eliminated altogether.
	
	\begin{pbl}
		Does there exist a $d$-regular graph $G$ with $d \to \infty$ such that
		\[
		\tetb(G) = \Theta(n) \Quad{for some} \eps = o(1)?
		\]
	\end{pbl}

	\iftoggle{anonymous}{
	}{%
		\section*{Acknowledgments}
		We thank John Haslegrave, Peleg Michaeli, and Endre Cs\'oka for some insightful discussions. 
	}
	
	\bibliographystyle{abbrv}
	\bibliography{ref}

	\appendix

	\section{Majorization and Schur-Convexity}
	
	For two  vectors $\mathbf{x},  \mathbf{y} \in \mathbb {R}^{d}$, we say that $\mathbf {x}$ majorizes $\mathbf {y}$ if $\sum_{i=1}^d x_i = \sum_{i=1}^d y_i$ and 
	\[ 
	\sum _{i=1}^{k}x_{i}^{\downarrow }\geq \sum _{i=1}^{k}y_{i}^{\downarrow } \quad \text{for all }k=1,\dots ,d,\]
	where $x_{i}^{\downarrow }$ denotes the $i$-th largest entry of $\mathbf{x}$. For $X\subseteq \mathbb{R}$, a function $f : X^d \to \mathbb{R}$ is Schur-convex (see \cite[Chapter~3]{MRBook}) if for any  $\mathbf{x}, \mathbf{y} \in X^d$, if $x$ majorizes $y$ then $f(\mathbf{x}) \geq f(\mathbf{y})$.

	\begin{lemma}[{\cite[Proposition~C.1]{MRBook}}] \label{lem:sum_of_convex_is_schur_convex}
		Let $f$ be a convex function. Then $g(x_1, \ldots, x_d) := \sum_{i = 1}^d f(x_i)$ is Schur-convex. 
	\end{lemma}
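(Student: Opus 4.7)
The plan is to prove this via the classical characterization of majorization by T-transforms (Hardy--Littlewood--P\'olya). Concretely, I will rely on the fact that $\mathbf{x}$ majorizes $\mathbf{y}$ (both in $X^d$, same sum) if and only if $\mathbf{y}$ can be obtained from $\mathbf{x}$ by a finite sequence of operations of the form
\[
(x_i, x_j) \longmapsto \bigl(\lambda x_i + (1-\lambda) x_j,\; (1-\lambda) x_i + \lambda x_j\bigr) \Qfor \lambda \in [0,1],
\]
leaving all other coordinates fixed; call these T-transforms. Since $g$ is obviously symmetric in its arguments, it suffices to show $g$ does not increase under a single T-transform.

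First I would reduce to the two-variable case: fix indices $i \ne j$ and parameter $\lambda \in [0,1]$, and write $a := x_i$, $b := x_j$. The change in $g$ under a single T-transform is
\[
\Delta \;=\; f\!\bigl(\lambda a + (1-\lambda) b\bigr) + f\!\bigl((1-\lambda) a + \lambda b\bigr) - f(a) - f(b),
\]
and all other terms cancel. Convexity of $f$ applied to each of the two inputs, written as convex combinations of $a$ and $b$, gives
\[
f\!\bigl(\lambda a + (1-\lambda) b\bigr) \le \lambda f(a) + (1-\lambda) f(b)
\Qand
f\!\bigl((1-\lambda) a + \lambda b\bigr) \le (1-\lambda) f(a) + \lambda f(b).
\]
Summing these two inequalities yields $\Delta \le 0$. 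Hence each T-transform can only decrease $g$.

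Now I would chain this: if $\mathbf{x}$ majorizes $\mathbf{y}$, then by the Hardy--Littlewood--P\'olya theorem there is a finite sequence $\mathbf{x} = \mathbf{z}^{(0)} \to \mathbf{z}^{(1)} \to \cdots \to \mathbf{z}^{(m)} = \mathbf{y}$ of T-transforms. Applying the single-step bound at each stage gives
\[
g(\mathbf{x}) = g(\mathbf{z}^{(0)}) \ge g(\mathbf{z}^{(1)}) \ge \cdots \ge g(\mathbf{z}^{(m)}) = g(\mathbf{y}),
\]
which is exactly the defining property of Schur-convexity.

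The only nontrivial ingredient is the T-transform decomposition of majorization, which is the standard Hardy--Littlewood--P\'olya theorem and can simply be cited from \cite{MRBook}; everything else is elementary convexity. So there is no real obstacle — the one place to be a little careful is ensuring the domain hypothesis $X \subseteq \mathbb{R}$ is an interval (or at least convex) so that the convex combinations appearing in each T-transform stay in the domain of $f$, which is the standard convention in this context.
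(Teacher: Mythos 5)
Your argument is correct: the two-variable convexity estimate does show that a single T-transform cannot increase $g$, and chaining this along a Hardy--Littlewood--P\'olya decomposition of the majorization $\mathbf{x} \succ \mathbf{y}$ gives exactly $g(\mathbf{x}) \ge g(\mathbf{y})$, i.e.\ Schur-convexity; your caveat that $X$ should be an interval so the intermediate convex combinations remain in the domain of $f$ is the right thing to flag (and is harmless in the paper's application, where the vectors live in $[0,\infty)^d$). The comparison with the paper is a little asymmetric, though: the paper does not prove this lemma at all --- it is quoted verbatim as Proposition~C.1 of the majorization monograph \cite{MRBook} and used as a black box en route to \cref{lem:powermeanschur}. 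So your proof is a genuine addition rather than an alternative to an in-paper argument, and its one nontrivial ingredient (the T-transform characterization of majorization) is itself a theorem from the same reference, so you have traded one citation for a more elementary one plus routine convexity. An equally short self-contained route, worth knowing, avoids the T-transform decomposition: if $\mathbf{x}$ majorizes $\mathbf{y}$ then $\mathbf{y} = D\mathbf{x}$ for a doubly stochastic matrix $D$, so $f(y_i) = f\bigl(\sum_j d_{ij} x_j\bigr) \le \sum_j d_{ij} f(x_j)$ by Jensen, and summing over $i$ using $\sum_i d_{ij} = 1$ gives $g(\mathbf{y}) \le g(\mathbf{x})$; this replaces the decomposition into elementary moves by a single application of the doubly stochastic characterization, at the cost of again citing a structural fact about majorization.
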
 
	
	\begin{lemma}[{\cite[Chapter~3.B.1.b]{MRBook}}] \label{lem:increasing_schur_convex}
		An increasing function of a Schur-convex function is Schur-
		convex. 
	\end{lemma}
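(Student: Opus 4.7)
The plan is to unwind the two definitions in sequence and observe that the composition is essentially automatic. Let $g : X^d \to \mathbb{R}$ be Schur-convex and $h : \mathbb{R} \to \mathbb{R}$ be increasing (i.e., non-decreasing on the range of $g$); set $f := h \circ g$. To verify that $f$ is Schur-convex, I would fix an arbitrary pair $\mathbf{x}, \mathbf{y} \in X^d$ such that $\mathbf{x}$ majorizes $\mathbf{y}$, and then show directly that $f(\mathbf{x}) \geq f(\mathbf{y})$. First I would apply Schur-convexity of $g$ to the pair $\mathbf{x}, \mathbf{y}$ to obtain $g(\mathbf{x}) \geq g(\mathbf{y})$. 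Then, since this is a genuine inequality between two real numbers in the domain of $h$, monotonicity of $h$ gives $h(g(\mathbf{x})) \geq h(g(\mathbf{y}))$, which is exactly $f(\mathbf{x}) \geq f(\mathbf{y})$. Since the pair was arbitrary, $f$ is Schur-convex.

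The hard part will be \emph{nothing}: there is no inductive step, no power-mean estimate, no convex-combination manipulation, and no technical regularity hypothesis to chase. The only modeling care is to interpret ``increasing'' as non-decreasing (as is standard in the majorization literature, cf.~\cite{MRBook}), so that the weak inequality $g(\mathbf{x}) \geq g(\mathbf{y})$ propagates through $h$; under strict monotonicity the same argument of course still works, simply with a strict inequality whenever $g(\mathbf{x}) > g(\mathbf{y})$. No appeal to \cref{lem:sum_of_convex_is_schur_convex} or to any characterization of majorization via doubly stochastic matrices is needed, and the argument makes no use of the structure of $X^d$ beyond the fact that majorization is defined on it. The entire proof fits in two lines, and I would present it exactly in the order above.
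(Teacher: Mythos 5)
Your proof is correct: the paper cites this fact from Marshall--Olkin without reproducing a proof, and your two-line composition argument (majorization gives $g(\mathbf{x}) \geq g(\mathbf{y})$, then monotonicity of $h$ gives $f(\mathbf{x}) \geq f(\mathbf{y})$) is exactly the standard argument behind the cited result. Nothing is missing, and your reading of ``increasing'' as non-decreasing is the right one for how the lemma is used in \cref{lem:powermeanschur}.
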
 
	Using these facts we can show the following lemma. 
	\begin{lemma}\label{lem:powermeanschur}
		For any $r\geq 1$, the  $r$-power mean $M_{r}(\cdot)$ is Schur-convex on non-negative real vectors.   
	\end{lemma}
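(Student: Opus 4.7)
My plan is to invoke the two auxiliary lemmas (\cref{lem:sum_of_convex_is_schur_convex} and \cref{lem:increasing_schur_convex}) that have just been quoted from Marshall--Olkin, after reducing $M_r$ to a composition to which they apply directly.

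First, I would observe that for $r \geq 1$, the map $f : [0,\infty) \to [0,\infty)$ defined by $f(x) = x^r$ is convex; this is immediate since $f''(x) = r(r-1)x^{r-2} \geq 0$ on $(0,\infty)$, with continuity at $0$. Applying \cref{lem:sum_of_convex_is_schur_convex} to this $f$, the function
\[
g(x_1, \ldots, x_d) \cq \sum_{i=1}^d x_i^r
\]
is Schur-convex on $[0,\infty)^d$. Multiplying by the positive constant $1/d$ preserves Schur-convexity (one can either check this from the definition directly, or note that $\tfrac{1}{d} g$ is again a sum of convex functions $x \mapsto x^r/d$ and re-apply \cref{lem:sum_of_convex_is_schur_convex}), so $\tfrac{1}{d} \sum_{i=1}^d x_i^r$ is Schur-convex.

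Second, I would note that the map $h : [0,\infty) \to [0,\infty)$ given by $h(t) = t^{1/r}$ is increasing (since $r \geq 1 > 0$). Hence, by \cref{lem:increasing_schur_convex}, the composition
\[
M_r(x_1, \ldots, x_d) = h\!\left( \tfrac{1}{d} \sum_{i=1}^d x_i^r \right) = \left( \tfrac{1}{d} \sum_{i=1}^d x_i^r \right)^{1/r}
\]
is Schur-convex on $[0,\infty)^d$, as required.

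There is essentially no obstacle here: the result is a direct two-step composition using the quoted Marshall--Olkin facts. The only points requiring any care are the convexity of $x \mapsto x^r$ for $r \geq 1$ on the non-negative reals (trivial) and the fact that scaling a Schur-convex function by a positive constant is Schur-convex (which also follows directly from the definition of majorization, since the partial-sum inequalities are preserved under positive scaling).
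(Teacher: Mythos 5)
Your proof is correct and follows essentially the same route as the paper: apply \cref{lem:sum_of_convex_is_schur_convex} to the convex power map (the paper folds the factor $1/d$ into the convex function $x \mapsto x^r/d$ directly, exactly as in your parenthetical alternative) and then compose with the increasing map $t \mapsto t^{1/r}$ via \cref{lem:increasing_schur_convex}. No gaps.
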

	\begin{proof}
		Since the function $f(x)=x^r/d$ is convex on $\mathbb{R}_{\geq 0}$ for $r\geq 1$, it follows that  $g(x_1, \dots, x_d)=\frac{1}{d}\sum_{i=1}^d x_i^r  $ is Schur-convex on non-negative real vectors by Lemma \ref{lem:sum_of_convex_is_schur_convex}. The result now follows by Lemma \ref{lem:increasing_schur_convex} since $h(x)= x^{1/r}$ is an increasing function on $\mathbb{R}_{\geq 0}$.
	\end{proof}
	
\end{document}